\newcommand{\R}{\mathbb{R}}
\newcommand{\supp}{\operatorname{supp}}
\newcommand{\map}{\mathbf{x}}
\newcommand{\InterphaseMix}{\Omega_{\mathrm{mix}}}
\newcommand{\function}[5]{\begin{eqnarray*}
		#1:#2 & \rightarrow & #3 \\
		   #4 & \mapsto     & #5
\end{eqnarray*}}
\newcommand{\Hil}[1]{H^{#1}}
\newtheorem{prop}{Proposition}
\newtheorem{lemma}{Lemma}
\newtheorem{definition}{Definition}
\newtheorem{corol}{Corollary}
\newtheorem{teor}{Theorem}
\theoremstyle{remark}
\theoremstyle{remark}
\newtheorem{remark}{Remark}
\author[Víctor Arnaiz]{Víctor Arnaiz}
\address{Université Paris-Saclay, CNRS, Laboratoire de mathématiques d’Orsay, 91405, Orsay, France.}
\email{victor.arnaiz@universite-paris-saclay.fr}
\author[Ángel Castro]{Ángel Castro}
\address{Instituto de Ciencias Matemáticas, CSIC-UAM-UC3M-UCM, E-28049, Madrid, Spain.}
\email{angel\_castro@icmat.es}
\author[Daniel Faraco]{Daniel Faraco}
\address{Departamento de Matemáticas, Universidad Autónoma de Madrid, E-28049, Madrid, Spain.}
\email{daniel.faraco@uam.es}
\title[Semiclassical estimates for pseudodifferential operators]{Semiclassical estimates for pseudodifferential operators and the Muskat problem in the unstable regime}
\begin{document}

\begin{abstract}
We obtain  new semiclassical estimates for pseudodifferential operators with low regular symbols. Such symbols appear naturally in a Cauchy Problem related to recent weak solutions to the unstable Muskat problem constructed via convex integration in \cite{CCF16}.  In particular, our new estimates reveal the tight relation between
the speed of opening of the mixing zone and the regularity of the interphase.
\end{abstract}

\maketitle

\section{Introduction and main results}

\noindent The evolution of a two fluids through a porous media where one of the fluid is above the other  is known as the Muskat problem \cite{Muskat37}. The physical derivation builds on the conservation of mass, the incompressibility of the flow and the Darcy law, which relates the velocity with the forces, namely the pressure and gravity. Such system is known
as the IPM system. Let us further assume that the fluids have constant densities and equal viscosities,  the permeability of the medium is also constant, and the initial data is given by the graph of a function $f_0:\mathbb{R} \to \mathbb{R}$.
That is \begin{equation}\label{initialdensity}
\rho|_{t=0}= \rho_1\chi_{\Omega_M}+\rho_2(1-\chi_{\Omega_M}),
\end{equation}
where $\Omega_{M}$ is the epigraph of $f_0$. If we  make the ansatz that as time
evolve the fluid still consists of two fluids separated by an smooth interphase $f(t,x)$, then $g=\partial^4 f$ has to solve a nonlinear and non local equation
\begin{equation}
\label{Muskat} \partial_t g=p_{M}(x,D) g+T[f](\partial_x g)+ R[f](  x,t).
\end{equation}
Here,
\begin{equation}
\label{e:muskat_symbol}
p_{M} (x,\xi)= -(\rho_2-\rho_1) \frac{|\xi|}{1+|f'(x)|^2},
\end{equation}
and $p_M(x,D)$ stands for the canonical quantization of $p_M$ (see \eqref{e:canonical_quantization} below),  $T[f]$ can be thought as a smooth function, and $R$ is a lower order remainder.

It turns out that if $\rho_1<\rho_2$ then the system is well posed for sufficiently regular $f$ (see \cite{Contour,CGS16,CGSV17,Mat19}). However, if $\rho_2>\rho_1$, the system is ill posed \cite{CCFL12,Contour}. In this situation, starting with the pioneering  work of Saffman and Taylor \cite{ST58}, numerics and experiments \cite{AT83,MH95} predict a fingering pattern in the evolution and the existence of an evolving in time \textbf{mixing zone} $\InterphaseMix$, where the fluids mix chaotically and the pointwise (microscopical) pattern of the two fluids is practically unpredictable.  However, as pointed out by Otto (see among others \cite{Otto97,Otto99,Otto2001}) several aspects of the  mixing zone and of the mixing pattern of the fluids can be derived from the relaxation of the system.

Recently, the IPM system and the Muskat problem have been revisited using
DeLellis-Székelyhidi program  to apply convex integration in hidrodynamics \cite{CFG11, Sze12,CCF16, FSz18, CFM19} (see
\cite{DSz12} for a review of the method). In particular, the various constructions of weak solutions to the Muskat problem yield an explicit description of the mixing zone.

In  \cite{CCF16} the mixing zone is described as a neighborhood of width $t c(t,x)$ of a pseudo-interphase $f(t,x)$ evolving in time $t \in [0,T]$.  More precisely, the map
\function{\map}{\R\times[-1,1]\times[0,T]}{\R^2}{(x,\lambda,t)}
{\big( x,f(t,x)+ \lambda t c(t,x) \big)}
defines the mixing zone $\InterphaseMix$ as
\begin{equation}\label{IMix}
\InterphaseMix(t)=\map  \big(\R\times(-1,1),t \big),\quad t\in(0,T].
\end{equation}
Moreover, in \cite{CCF16,CFM19} it has been proven that if $f$ and $c$ are suitable coupled through the equation
\begin{align}\label{Muskatmix}
\partial_t f = \mathcal{M}[c,f] f,
\end{align}
where $\mathcal{M}[c,f] f$ is a nonlinear integro-differential operator acting on $f$,
 then there exist infinitely many weak solutions to the IPM system compatible with such mixing zone (called mixing solutions).  For $g=\partial^4 f$ it can be checked that
\begin{equation}
\label{full}
\partial_t g=p_{\textrm{mix}}(x,D)(g) + T[f]\partial_x g +R[f],
 \end{equation}
where, analogously to \eqref{Muskat}, $T[f]$ is a smooth function and $R$ are lower order terms.
The symbol $p_{\textrm{mix}}$ has a rather cumbersome explicit  expression, see \cite{CCF16}, but  it satisfies that $p_{\textrm{mix}}(x,\xi) \approx  d_t(x,\xi)$, where the symbol $d_t(x,\xi) = t^{-1}d(x,t \xi)$  and

\begin{equation}
\label{e:differential_symbol}
d(x,\xi) := \frac{\vert \xi \vert}{1 + c(x)\vert \xi \vert}.
\end{equation}
Here $c(x)$  is a smooth function which is assumed to not depend on $t$ for simplicity. It turns that $p_{\textrm{mix}}$ in \eqref{full} is  slightly better than $p_{M}$ in \eqref{Muskat} and this is the reason why \eqref{Muskatmix} can be solved as is proved in \cite{CCF16}.

This paper is focused on the  study of the equation

\begin{equation}\label{IVP}
\text{(IVP) }
\left \lbrace \begin{array}{ll}
\partial_t f(t,x) = d_t(x,D) \, f(t,x), \\[0.2cm]
  f(0,\cdot) = f_0 \in L^2(\R).
\end{array} \right.
\end{equation}
which captures the main difficulties in order to get a new energy estimate for \eqref{full} which allows us to show local existence for \eqref{Muskatmix} with an improvement of the regularity with respect to \cite{CCF16}.

Notice that if $c$ is identically constant, then (IVP) has a global-in-time solution which can be computed explicitely using the Fourier transform. Indeed,
\begin{equation}
\label{e:solution_c_constant}
f(t,x) = p(tD)f_0(x) = \int_\R (1 + c \,  t \vert \xi \vert)^{\frac{1}{c}} \widehat{f}_0(\xi) e^{2\pi i x \xi} d\xi, \quad t> 0.
\end{equation}

In particular, the following conservation law holds:
\begin{equation}
\label{e:conservation_with_constant_c}
\frac{d}{dt} \Vert p^{-1} (tD) f(t) \Vert_{L^2}^2 = 0, \quad \forall t \geq0,
\end{equation}
and $f(1,x)$ is comparable to the $c^{-1}$ derivative of the initial data. Thus the regularity of the solution seems to decreases as the width of the mixing zone is thinner. In \cite{CCF16} a G\aa rding-inequality is used to deal with a variable with $c(x)$ and this yields  a loss of one derivative with respect to the initial data. Here we frontally attack (IVP) via a suitable new commutator estimates.  This new strategy gives us  a gain of local regularity.

Notice also that the operator $d_t(x,D)$, written in (IVP) as the canonical quantization of $d_t$, can be also put in the form $d_t(x,D) = t^{-1} d(x,tD)$, where now $d(x,tD)$ denotes the semiclassical quantization of $d$ with semiclassical paramter $t$ (see \eqref{e:semiclassical_quantization} below). This precision may seem at first view merely cosmetic. However, one of the goals of this work is to clarify that semiclassical calculus, and not only pseudodifferential calculus, is essential to obtain analogous conservation laws to \eqref{e:conservation_with_constant_c} for the solution of (IVP) in the non-constant case.

The study of such evolutions with low regular and variable growth symbols seems to be new in the semiclassical picture and might find applications elsewhere. Interest in low regular symbols appear in other problems in fluid mechanics \cite{Lannes06,Tex07}. In \cite{Lannes06}, Lannes studies in a very careful  way the action of pseudodifferential operators $a(x,D)$ on Sobolev spaces $H^s$, with symbols $a(x,\xi)$ having limited regularity in the position variable $x$ and in momentum variable $\xi$ near the origin,  via the use of paradifferential calculus. In \cite{Tex07}, Texier extends some of the techniques of \cite{Lannes06} to deal with semiclassical pseudodifferential operators having only low regularity in the $x$ variable and being smooth in the $\xi$ variable. Our result is indeed related to these, but we need to use semiclassical calculus with symbols having very low regularity in $x$ and in $\xi$. This, up to our view, entails certain obstructions to extend the techniques of \cite{Lannes06} to the semiclassical framework through the use of paradifferential calculus, and for this reason we only use it tangentially. Alternatively, our approach is strongly concerned with the techniques of \cite{Hwang87, CCF16}.

We present two semiclassical theorems in the form of conservation laws that predict the $c(x)^{-1}$ loss of regularity with respect to the local regularity of the initial data, in contrast with the loss of one derivative obtained in \cite{CCF16}. To this aim, we define the symbol
\begin{equation}
\label{e:p}
p(x,\xi) := (1 + c(x) \vert \xi \vert)^{\frac{1}{c(x)}}.
\end{equation}
In order to state our results, we need to impose some regularity assumptions on $c(x)$. The precise class of \textit{admissible} functions $c(x)$ considered in this work is fixed in Definition \ref{d:admissible} below.  We first state a local-in-time conservation law in terms of the pseudo-inverse of $p(x,tD)$:
\begin{teor}
\label{t:main_theorem1}
Let $c_1,c_2 > 0$ be an admissible pair satisfying \eqref{e:admissible_pair}. Let $c(x)$ be $(c_1,c_2)$-admissible. Then there exists $T > 0$ such that
$$
\frac{d}{dt} \Vert p^{-1}(x,tD) f(t) \Vert^2_{L^2(\R)} \leq C_T \Vert p^{-1}(x,tD) f(t) \Vert_{L^2(\R)}^2, \quad \forall t \in (0,T],
$$
where the constant $C_T$ depends only on $T$ and on $c$. In particular, $\Vert p^{-1}(x,D) f(t) \Vert_{L^2(\R)}$ remains bounded for $t \in [0,T]$ if $f_0 \in L^2(\R)$.
\end{teor}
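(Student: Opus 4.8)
The plan is to differentiate the weighted energy and to reduce, through semiclassical symbolic calculus with semiclassical parameter $h=t$, the whole right-hand side to a single operator bounded on $L^2(\R)$ uniformly in $t$; Grönwall's inequality then closes the argument. By density we may assume $f_0$ is smooth with compactly supported Fourier transform, so that the solution $f(t)$ of (IVP) exists, is smooth in $(t,x)$, and the manipulations below are legitimate; the final bound extends to arbitrary $f_0\in L^2(\R)$ through the well-posedness framework. Set $A(t):=p^{-1}(x,tD)$ and $g(t):=A(t)f(t)$, so that $g(0)=f_0$ since $p^{-1}(x,0)\equiv 1$. Differentiating in time,
\[
\partial_t g = \dot A(t) f + A(t)\partial_t f = \dot A(t) f + A(t)\, d_t(x,D) f .
\]
The key observation is the \emph{exact} symbol identity
\[
\partial_t\big[p^{-1}(x,t\xi)\big] = -\,d_t(x,\xi)\, p^{-1}(x,t\xi),
\]
checked directly from $p^{-1}(x,t\xi)=(1+c(x)t|\xi|)^{-1/c(x)}$ and $d_t(x,\xi)=|\xi|/(1+c(x)t|\xi|)$; this is precisely the identity that, in the constant-coefficient case, produces \eqref{e:conservation_with_constant_c}. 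Since the $t$-dependence of $A(t)$ sits entirely in its symbol, $\dot A(t) = -\,\Op\big(d_t(x,\xi)\,p^{-1}(x,t\xi)\big)$, and because $p^{-1}$ and $d_t$ commute as symbols we obtain $\partial_t g = E(t)f$ with $E(t):= A(t)\,d_t(x,D) - \Op\big(p^{-1}(x,t\xi)\,d_t(x,\xi)\big)$.

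Next I would rewrite $E(t)$ semiclassically. Using $A(t)=p^{-1}(x,tD)$, $d_t(x,D)=t^{-1}d(x,tD)$ and $\Op(p^{-1}(x,t\xi)d_t(x,\xi))=t^{-1}(p^{-1}d)(x,tD)$ (all quantizations being semiclassical with parameter $t$), the composition rule gives $A(t)d_t(x,D)=t^{-1}(p^{-1}\#_t d)(x,tD)$, so that $E(t) = t^{-1}\big(p^{-1}\#_t d - p^{-1}d\big)(x,tD)$. Invoking the semiclassical composition expansion established earlier in the paper, in its low-regularity version adapted to the present symbols, $p^{-1}\#_t d = p^{-1}d + \tfrac{t}{2i}\{p^{-1},d\} + t^2\, r(t)$ with $r(t)$ bounded, uniformly for $t\in(0,T]$, in the appropriate symbol class two orders below $p^{-1}$. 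The factor $t^{-1}$ then cancels and
\[
E(t) = \tfrac{1}{2i}\,\{p^{-1},d\}(x,tD) + t\, r(t)(x,tD).
\]
A Poisson-bracket computation shows that $\{p^{-1},d\}$ lies in a symbol class of order $-1-1/c(x)<0$, so the first summand is bounded on $L^2(\R)$ uniformly in $t$.

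To bring $\Vert g\Vert_{L^2}$ back to the right-hand side, I would use that $p(x,tD)$ is elliptic of (variable) positive order $1/c(x)$, and that for $T$ small enough $A(t)=p^{-1}(x,tD)$ is invertible because $A(t)\,p(x,tD)=I + O_{L^2\to L^2}(t)$, with the analogous statement on the other side; equivalently one works with a parametrix of $p(x,tD)$ and absorbs the $O(t)$ errors. Writing $E(t)=B(t)A(t)$ with $B(t):=E(t)A(t)^{-1}$, one more composition shows that $E(t)p(x,tD)$ is the quantization of a symbol of order $\le 0$ — its leading term is $\tfrac{1}{2i}\{p^{-1},d\}\,p$, of order $-1$, and the remaining term carries an extra factor $t$ — hence $E(t)p(x,tD)$, and therefore $B(t)$, is bounded on $L^2(\R)$ with a bound $C_T$ depending only on $T$ and $c$. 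Consequently
\[
\frac{d}{dt}\Vert g(t)\Vert_{L^2}^2 = 2\,\mathrm{Re}\,\langle \partial_t g,g\rangle = 2\,\mathrm{Re}\,\langle B(t)g,g\rangle \le 2C_T\Vert g(t)\Vert_{L^2}^2 ,
\]
which is the claimed differential inequality; Grönwall together with $g(0)=f_0\in L^2(\R)$ yields boundedness of $\Vert p^{-1}(x,tD)f(t)\Vert_{L^2(\R)}$ on $[0,T]$.

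The main obstacle is the symbolic calculus itself: the symbols $p$, $p^{-1}$ and $d$ lie outside the classical Hörmander framework — they are only finitely smooth in $x$ (through the admissible class of $c$), merely Lipschitz in $\xi$ near $\xi=0$ (because of $|\xi|$, and for $p^{\pm1}$ also the fractional power $1/c(x)$), and $p$ carries a \emph{variable order} $1/c(x)$. The delicate points are: (i) proving the composition expansion $p^{-1}\#_t d = p^{-1}d + \tfrac{t}{2i}\{p^{-1},d\}+t^2 r(t)$ with $r(t)$ controlled \emph{uniformly in $t$}, which forces Hwang-type oscillatory-integral estimates keeping the $\xi=0$ singularity and the variable order under control; (ii) constructing a sufficiently good (pseudo-)inverse of the low-regular, variable-order elliptic operator $p(x,tD)$; and (iii) the low-regularity $L^2$-boundedness statements used repeatedly above. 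The admissibility condition \eqref{e:admissible_pair} on the pair $(c_1,c_2)$ is precisely what makes the order arithmetic and the regularity budget close, so that the Poisson bracket and the remainder — after composition with $p$ — still produce operators bounded on $L^2(\R)$ in spite of the logarithmic losses incurred when differentiating the variable order in $x$.
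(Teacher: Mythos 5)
Your reduction is the paper's: differentiating $\Vert p^{-1}(x,tD)f(t)\Vert_{L^2}^2$ and using the exact identity $\partial_t p^{-1}(x,t\xi) = -d_t(x,\xi)\,p^{-1}(x,t\xi)$ together with (IVP) gives $\partial_t g = t^{-1}\mathfrak{C}_t(p^{-1},d)f$ with $g=p^{-1}(x,tD)f$, which is exactly your $E(t)f$. The endgame is also in the same spirit: you absorb $f$ back into $g$ through the near-invertibility of $p^{-1}(x,tD)$ for small $t$, which is what the coercivity statement Proposition~\ref{l:coercividad} delivers.

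The genuine gap is in how you bound $E(t)$. You invoke a two-term semiclassical expansion $p^{-1}\#_t d = p^{-1}d + \tfrac{t}{2i}\{p^{-1},d\} + t^2 r(t)$ with $r(t)$ controlled uniformly in $t$, attributing it to ``the semiclassical composition expansion established earlier in the paper, in its low-regularity version.'' No such expansion appears in the paper, and none is available at this regularity: isolating the Poisson-bracket term with a controlled $O(t^2)$ remainder requires two bounded $\xi$-derivatives of the symbols, while $p^{-1}$ and $d$ lie only in $\mathcal{M}^m_{N,1}$ (resp.\ $\mathcal{N}^m_{s,1}$), i.e.\ a single $\xi$-derivative, and $\partial_\xi^2 d$ already produces a Dirac mass at $\xi=0$. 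The paper's route (Proposition~\ref{p:third_prop}) deliberately stops short of any such expansion: it proves only the one-term operator bound $\Vert t^{-1}\mathfrak{C}_t(p^{-1},d)\Vert_{\mathcal{L}(H^{-m_1}_t;L^2)}\leq C_2$, extracting the single factor $t$ from a dyadic decomposition of $\xi-\eta$, the mean-value theorem applied to $p^{-1}(x,\xi)-p^{-1}(x,\eta)$ (one $\xi$-derivative), and Hwang-type oscillatory-integral estimates (Corollaries~\ref{c:localized_castro} and \ref{c:localized_disjoint_supports} under \textbf{(C2)}, or their Sobolev analogues under \textbf{(C2')}). If you replace your symbolic-expansion step by this direct commutator bound and then close with Proposition~\ref{l:coercividad} exactly as you propose, the argument coincides with the paper's.
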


\begin{remark}
Theorem \ref{t:main_theorem1} illustrates how the size of $c(x)$ is linked with the  regularity of the pseudo-interface $f(x,t)$ . Precisely, as the coefficient $c_1$ is smaller, the regularity of $c(x)$ is required to increase, while the loss of derivatives of $f(t,x)$ with respect to $f(0,x)$ becomes larger. This means that the regularity of $f(t,x)$ in the $x$ variable is related to $c(x)^{-1}$ via the pseudo-inverse $p^{-1}(x,tD)$, and hence it appears as a pseudo-local feature.
\end{remark}

Our next result explains the local smoothing properties of $p^{-1}(x,tD)$ around any fixed point $x_0 \in \R$ in terms of local Sobolev regularity. We take $\varepsilon > 0$ small and define
\begin{align*}
c_\varepsilon^+  := \sup_{x \in I_\varepsilon} c(x) + \varepsilon; \quad c_\varepsilon^- & :=  \inf_{x \in I_\varepsilon} c(x) - \varepsilon,
\end{align*}
where $I_\varepsilon = (x_0 - \varepsilon, x_0 + \varepsilon)$.
\begin{teor}
\label{t:localization}
Let $c_1,c_2 > 0$ satisfying \eqref{e:admissible_pair}. Let $c(x)$ be $(c_1,c_2)$-admissible. Set $m_1 = c_1^{-1} $,  $s_- := -1/c_\varepsilon^-$ and $s_+ := -1/c_\varepsilon^+$. Then, for every smooth bump function $\chi_\varepsilon(x)$ supported on $I_\varepsilon$, there exist constants $C_0, C_1,C_2 > 0$ such that
$$
C_1 \Vert \chi_\varepsilon f \Vert_{H^{s_-}_t(\R)} - t C_0 \Vert f \Vert_{H^{-m_1}_t(\R)} \leq  \big \Vert \chi_\varepsilon(x) p^{-1}(x,tD) f \big \Vert_{L^2}  \leq C_2 \Vert \chi_\varepsilon f \Vert_{H^{s_+}_t(\R)} + t C_0 \Vert f \Vert_{H^{-m_1}_t(\R)},
$$
for every $f \in H^{-m_1}_t(\R)$ and $0 < t \leq T$.
\end{teor}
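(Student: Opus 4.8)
The plan is to compare $\chi_\varepsilon(x)\,p^{-1}(x,tD)$, microlocally over $I_\varepsilon$, with the constant-order semiclassical operators $\langle tD\rangle^{s_\pm}$. Three facts drive the argument: a two-sided pointwise bound for $p^{-1}$ valid on $I_\varepsilon$; the global ellipticity-from-below $p^{-1}(x,\xi)\gtrsim\langle\xi\rangle^{-m_1}$ coming from $c(x)\ge c_1$, which is what makes $H^{-m_1}_t$ the correct space for the error terms; and the semiclassical symbolic calculus (composition, commutators, division) for these low-regularity symbols, together with pseudolocality to discard the part of $f$ localized away from $I_\varepsilon$. First I would record the symbol bounds: since $c\mapsto(1+c|\xi|)^{-1/c}$ is nondecreasing once $c|\xi|$ is bounded below, and since on $I_\varepsilon$ one has $c_\varepsilon^-+\varepsilon\le c(x)\le c_\varepsilon^+-\varepsilon$ by definition of $c_\varepsilon^\pm$, there are $\delta_\pm=\delta_\pm(\varepsilon)>0$ with $p^{-1}(x,\xi)\langle\xi\rangle^{-s_+}\lesssim\langle\xi\rangle^{-\delta_+}$ and $p(x,\xi)\langle\xi\rangle^{s_-}\lesssim\langle\xi\rangle^{-\delta_-}$ for $x\in\supp\chi_\varepsilon$ and $|\xi|\ge1$. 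I would also reduce to high frequencies at the outset: with $\theta\in C_c^\infty(\R)$, $\theta\equiv1$ near $0$, the piece $\Op(\chi_\varepsilon\theta p^{-1})$ has a symbol compactly supported in $\xi$ and (by admissibility of $c$) bounded with bounded $x$-derivatives, hence contributes harmless terms on both sides, and on the complementary region $p^{\pm1}$ are genuine symbols, smooth in $\xi$, with $x$-regularity fixed by the admissibility class, to which the calculus applies.

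For the upper bound I would use that composing a left-quantized operator with a Fourier multiplier on the right is exact: $\chi_\varepsilon p^{-1}(x,tD)=\Op(b_+)\langle tD\rangle^{s_+}$ with $b_+(x,\xi):=\chi_\varepsilon(x)p^{-1}(x,\xi)\langle\xi\rangle^{-s_+}$ a symbol of order $-\delta_+<0$, so $\Op(b_+)$ is bounded on $\Leb{2}$ by semiclassical Calder\'on--Vaillancourt (needing only finitely many derivatives of $c$). As $b_+$ is supported in $x$ on $\supp\chi_\varepsilon$, pseudolocality gives $\Op(b_+)\langle tD\rangle^{s_+}f=\Op(b_+)\,\psi\,\langle tD\rangle^{s_+}f+O(t^\infty)$ for a bump $\psi\equiv1$ near $\supp\chi_\varepsilon$, supported in $I_\varepsilon$; commuting $\langle tD\rangle^{s_+}$ past $\psi$ (an error of order $s_+-1$ with a factor of $t$) and bounding $\Op(b_+)$ on $\Leb{2}$ gives
\[
\|\chi_\varepsilon p^{-1}(x,tD)f\|_{\Leb{2}}\lesssim\|\psi f\|_{H^{s_+}_t}+t\|f\|_{H^{s_+-1}_t}+O(t^\infty)\|f\|_{H^{-m_1}_t}\lesssim\|\chi_\varepsilon f\|_{H^{s_+}_t}+tC_0\|f\|_{H^{-m_1}_t},
\]
the last step using the inclusion $H^{s_+-1}_t\subset H^{-m_1}_t$ --- exactly the sort of constraint encoded in \eqref{e:admissible_pair} --- together with $\|\psi f\|_{H^{s_+}_t}\lesssim\|\chi_\varepsilon f\|_{H^{s_+}_t}$, valid once $\chi_\varepsilon$ on the right is taken slightly larger than $\psi$ (still supported in $I_\varepsilon$, $\equiv1$ near $x_0$); since the statement only records local regularity near $x_0$ this is harmless.

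For the lower bound I would construct a local parametrix: take $\tilde\chi_\varepsilon\in C_c^\infty(I_\varepsilon)$ equal to $1$ on $\supp\chi_\varepsilon$ and set $e(x,\xi):=\tilde\chi_\varepsilon(x)\,p(x,\xi)\,\langle\xi\rangle^{s_-}$, which by the second symbol bound has order $-\delta_-<0$, so $E:=\Op(e)$ is $\Leb{2}$-bounded. Since $p\,p^{-1}\equiv1$ and $\tilde\chi_\varepsilon\chi_\varepsilon=\chi_\varepsilon$, one has $e\cdot(\chi_\varepsilon p^{-1})=\chi_\varepsilon\langle\xi\rangle^{s_-}$, so the calculus gives $E\circ\chi_\varepsilon p^{-1}(x,tD)=\langle tD\rangle^{s_-}\chi_\varepsilon+tR$ with $R$ of order $\le s_+-\delta_--1$; correcting $e$ by finitely many lower-order semiclassical terms one may take $R$ of order $\le-m_1$. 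Then
\[
\|\chi_\varepsilon f\|_{H^{s_-}_t}=\|\langle tD\rangle^{s_-}\chi_\varepsilon f\|_{\Leb{2}}\le\|E\|\,\|\chi_\varepsilon p^{-1}(x,tD)f\|_{\Leb{2}}+t\|Rf\|_{\Leb{2}}\le C\,\|\chi_\varepsilon p^{-1}(x,tD)f\|_{\Leb{2}}+tC_0\|f\|_{H^{-m_1}_t},
\]
which rearranges into the stated lower bound (the commutator $[\chi_\varepsilon,\langle tD\rangle^{s_-}]$ appearing along the way has order $s_--1<-m_1$, hence is absorbed into the last term).

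I expect the real difficulty to lie not in these soft steps but in the quantitative symbolic bookkeeping they rest on: $p^{\pm1}$ are only Lipschitz in $\xi$ near $\xi=0$ (the odd $\xi$-derivatives jump there) and only finitely many times differentiable in $x$, so the composition, commutator and division identities invoked above are not automatic and must be run with explicit remainders whose orders and whose behaviour as $t\to0$ must be controlled. This is precisely where the admissibility hypotheses on $c$ and the constraint \eqref{e:admissible_pair} enter --- the latter being what forces the remainders below order $-m_1$ --- and where the semiclassical counterparts of the techniques of \cite{Hwang87,CCF16} are needed.
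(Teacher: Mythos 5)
Your proposal differs substantially from the paper's proof and leaves crucial steps unresolved — steps you yourself flag at the end as ``the real difficulty''. The paper's proof hinges on a trick you do not use: it defines a \emph{globally} modified speed $c_\varepsilon:\R\to\R$, still admissible, agreeing with $c$ on $I_\varepsilon$ and satisfying $c_\varepsilon^-\le c_\varepsilon(x)\le c_\varepsilon^+$ for \emph{all} $x$. Since $\chi_\varepsilon p^{-1}(x,tD)=\chi_\varepsilon p_\varepsilon^{-1}(x,tD)$, the localized problem becomes a global problem for $p_\varepsilon$, whose symbol has the desired order $s_\pm$ uniformly in $x$. All the machinery already built — the single commutator estimate $\Vert\mathfrak{C}_t(p^{-1}_\varepsilon,\chi_\varepsilon)\Vert_{\mathcal{L}(H^{-m_1}_t;L^2)}\lesssim t$ (Corollary \ref{c:last_corol}), the boundedness of $p_\varepsilon\langle\xi\rangle^{s_-}(x,tD)$ (Calder\'on--Vaillancourt), and the Neumann-series coercivity of Proposition \ref{l:coercividad} transported to $H^{s_-}_t$ — then applies verbatim. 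No parametrix construction, no higher-order expansion, and no pseudolocality estimate is needed: localization is achieved by the exact identity $\chi_\varepsilon p^{-1}_\varepsilon(x,tD)f=p^{-1}_\varepsilon(x,tD)(\chi_\varepsilon f)-\mathfrak{C}_t(p^{-1}_\varepsilon,\chi_\varepsilon)f$, and the single commutator absorbs the error into $tC_0\Vert f\Vert_{H^{-m_1}_t}$.

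Two concrete problems with your plan remain unaddressed. First, the claimed $O(t^\infty)$ pseudolocality error: even after the high-frequency cutoff $1-\theta$, the operator $\Op(b_+)(1-\psi)$ does not obviously come with an $O(t^\infty)$ remainder. Its kernel decay rests on $\xi$-derivatives of the symbol, and the paper's analysis only tracks one such derivative in the seminorms $\mathcal{M}^m_{j,1}$; even granting $C^\infty_\xi$ smoothness away from $\xi=0$, the estimate must be proved with explicit Sobolev dependence on $f$, which is precisely what Corollary \ref{c:last_corol} does via the Hwang-style Calder\'on--Vaillancourt argument — you cannot just invoke ``pseudolocality''. Second, your parametrix correction ``by finitely many lower-order semiclassical terms'' is not available here: the symbols have finitely many $x$-derivatives (or lie in $H^s$ with $s<\infty$), and each composition term costs one $x$-derivative of $p^{\pm1}$. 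You would run out of regularity. In fact the admissibility condition $c_1^{-1}-c_2^{-1}\le1$ is precisely what makes a \emph{single} commutator suffice: it guarantees that the one-step remainder $\mathfrak{C}_t(e,\chi_\varepsilon p^{-1})$, after the conjugation trick $f\mapsto g$ with $\widehat{f}=(1+2\pi it\xi)^{m_1}\widehat{g}$, falls under Corollary \ref{c:localized_castro}'s hypothesis $m_1+m_2-1\le0$ and therefore already maps $H^{-m_1}_t\to L^2$ with a factor $t$. Your plan misreads the order count (believing an iteration is needed) and does not supply the conjugation argument that actually closes the estimate. Finally, a minor issue: the upper bound produces $\Vert\psi f\Vert_{H^{s_+}_t}$ with $\psi$ a strictly larger bump than $\chi_\varepsilon$, which is not what the theorem asserts; the paper's exact commutator identity keeps the same $\chi_\varepsilon$ on both sides.
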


In principle these theorems combined with the strategy from \cite{CCF16} should yield the corres\-ponding $c(x)^{-1}$ loss of
regularity  for the more complicated equation \eqref{full} as well as the price of reproducing some of the heavy computations of
\cite{CCF16} (we remark that this is consistent with the fact that the unstable Muskat problem $c \equiv 0$ is ill posed).
  A solution to
\eqref{full} combined  with \cite[Theorem 4.2]{CCF16} yields the existence of a continuous subsolution. Then the h-principle \cite{CFM19} yields  weak solutions whose averages are essentially the subsolution. Indeed, these averages, which represent how the fluids mix in a mesoscopic scale,  are continuous and strictly monotone from the top to the bottom of the mixing zone which seems a natural feature. Let us emphasize that, in this approach, the regularity of the boundary of mixing zone at a point $(x, f(x))$ just depends  on the speed of opening $c(x)$ in
a neighborhood of $x$.

Let us remark that the construction of the mixing zone (and of the corresponding subsolutions and solutions)  is highly non unique  and in these various problems selecting a one which prevales above the others based on physical principles (diffusion, surface tension, entropy rate maximizing) is perhaps the most challenging problem \cite{OM06,YS06,Sze12}.
Remarkably, in   the approach of \cite{FSz18,NSz20},  which provides piecewise constant subsolutions, the speed of opening $c(x)$ does not affect the regularity of the mixing zone, and it is only present in the norm of the operators.




In section 2 we revisite the notation of function spaces, pseudodifferential operators and discuss the admissible opening speeds for the mixing zone. In Section 3 we prove the key commutator estimates, in Section 4 we show how these estimates give information about the smoothing properties of our operators in the Sobolev spaces $H^s$, and in Section 5 we give the proofs of the main theorems.

\subsection*{Acknowledgments} VA, AC and DF have been supported by the Spanish Ministry of Economy under the ICMAT Severo Ochoa grant SEV2015-0554 and VA and AC also by the Europa Excelencia program ERC2018-092824. AC is partially supported by the MTM2017-89976-P and the ERC Advanced Grant 788250.  D.F  is supported by the ERC Advanced Grant  834728 and by the MTM2017-85934-C3-2-P.
The authors acknowledge Fabricio Macià for his suggestion to address this problem from the semiclassical point of view.

\section{Admissible symbol classes}
\label{s:symbol_classes}

\subsection{Symbols with limited smoothness}

We will consider the following classes of symbols. First we consider symbols having a finite number of derivatives in $L^\infty(\R_x\times \R_\xi)$.

\begin{definition}
\label{d:classes_infinity} Let $m \in \R$ and let $j,k \in \mathbb{N}_0$. A symbol $a(x,\xi)$ belongs to the class $\mathcal{M}_{j,k}^m$ if:
\begin{itemize}
\item  $a \in W^{k,\infty}_{loc}\big(\R_\xi ; W^{j,\infty}(\R_x)\big)$.
\medskip

\item Moreover,
$$
M^m_{j,k}(a) := \sup_{\alpha \leq j, \, \beta \leq k} \, \sup_{(x,\xi) \in \R^2} \, (1 + \vert \xi \vert)^{\vert \beta \vert - m} \big \vert \partial_x^\alpha \partial_\xi^\beta a(x,\xi) \big \vert < \infty.
$$
\end{itemize}
If $m = k = 0$, we denote simply $M_j(a) := M_{j,0}^0(a)$.

\end{definition}

We will also consider symbols that belong to $H^s(\R_x)$ in the $x$ variable,  while in the $\xi$ variable have a finite number of bounded derivatives.

\begin{definition}
\label{d:sobolev_in_x_spaces}
Let $m \in \R$, $k \in \mathbb{N}_0$ and $s > 1/2$. A symbol $a(x,\xi)$ belongs to the class $\mathcal{N}_{s,k}^m$ if:
\begin{itemize}
\item  $a \in W^{k,\infty}_{loc}\big(\R_\xi ; H^s(\R_x)\big)$.
\medskip

\item Moreover,
$$
N_{s,k}^m(a) := \sup_{\beta \leq k} \sup_{\xi \in \R} \, (1 + \vert \xi \vert)^{\vert \beta \vert - m}  \big \Vert \partial_\xi^\beta a(\cdot,\xi) \big \Vert_{H^s(\R)} < \infty.
$$
\end{itemize}
If $m = k = 0$, we denote simply $N_s(a) := N_{s,0}^0(a)$.
\end{definition}

Given a symbol $a(x,\xi)$, the canonical quantization $a(x,D)$ is defined acting on Schwartz functions by
\begin{equation}
\label{e:canonical_quantization}
a(x,D) f(x) := \int_{\R} e^{2\pi i x \xi} a(x,\xi) \widehat{f}(\xi) d\xi, \quad  f \in \mathscr{S}(\R),
\end{equation}
where $\widehat{f}$ denotes the Fourier transform, with the convention
$$
\widehat{f}(\xi) = \int_\R e^{-2\pi i x \xi} f(x) dx.
$$
The symbols under consideration will also depend on time $t \geq 0$, which will play the role of semiclassical parameter. The semiclassical quantization $a(x,tD)$ is defined by
\begin{equation}
\label{e:semiclassical_quantization}
a(x, tD) f(x) =  \int_{\R} e^{2\pi i x \xi} a(x,t \xi) \widehat{f}(\xi) d\xi, \quad f \in \mathscr{S}(\R).
\end{equation}

We are interested in the action of $a(x,tD)$ on the Sobolev spaces $H^s(\R)$. Since the decayment properties of $a$ in the $\xi$ variable scale in terms of the semiclassical parameter $t$,  it is usefull to include the semiclassical parameter also in the Sobolev spaces $H^s(\R)$. To this aim, we recall the following definition of semiclassical Sobolev space (see for instance \cite[Sect. 8.3, eq. (8.3.5)]{Zw12} or \cite[Sect. 2.1]{Tex07}):
\begin{definition}
Let $s \in \R$. We define
\begin{align*}
H^s_t(\R) & := \{ f \in \mathcal{D}'(\R) \, : \, (1 + it\xi)^s \, \widehat{f}(\xi) \in L^2(\R) \}, \quad t \in (0,1].
\end{align*}
For $t =1$, we have that $H^s_1(\R) = H^s(\R)$.
\end{definition}

\begin{remark}
\label{r:isometry}
Notice that the operator
$$
\begin{array}{rcl}
U_t :  H^s & \longrightarrow & H^s_t \\[0.2cm]
          u(x) & \longmapsto & U_tu(x) = t^{-1/2} u(t^{-1} x)
\end{array}
$$
is unitary and $U_t^* a(x,tD) U_t = a^t(x,D)$ with $a^t(x,\xi) = a(tx,\xi)$.
\end{remark}

From now on, we consider a function $c(x)$ belonging to the following class.
\begin{definition}
\label{d:admissible}
Let $c_1,c_2 > 0$ satisfying that
\begin{equation}
\label{e:admissible_pair}
0< c_1 < c_2 \leq 2, \quad c_1^{-1} - c_2^{-1} \leq 1.
\end{equation}
We say that a function $c : \R \to \R$ is $(c_1,c_2)$-admissible if:
\medskip

\textnormal{\textbf{(C1)}}  $c_1 < \inf_{x \in \R} c(x) \leq \sup_{x \in \R} c(x) < c_2 $.
\medskip

\noindent Moreover, $c$ satisfies at least one of the following conditions:
\medskip

\textnormal{\textbf{(C2)}} $\hspace*{0.1cm} c \in W^{N,\infty}(\R)$, for some $N \in \mathbb{N}$ satisfying that $N > 3/2 + c_1^{-1}$ and $N \geq 1 + \lceil c_1^{-1} \rceil$.
\medskip

\textnormal{\textbf{(C2')}} There exists $c_0 \in (c_1,c_2)$ such that $v_1(x) := c(x) - c_0$ and $v_2(x) := c(x)^{-1} - c_0^{-1}$ satisfy
\begin{equation*}
v = (v_1,v_2) \in H^{s}(\R) \times H^{s}(\R), \quad s > 3/2 +  c_1^{-1}, \quad s \geq 1 + \lceil c_1^{-1} \rceil.
\end{equation*}

\end{definition}

\begin{remark}
In the particular case $(c_1,c_2) = (1,2)$, for $\textbf{(C2)}$ to hold it is sufficient that $N \geq 3$, while for $\textbf{(C2')}$ it is sufficient that $s > 5/2$.
\end{remark}

We define the symbol $d_t(x,\xi) = t^{-1}d(x,t \xi)$, where $d$ is given by \eqref{e:differential_symbol}. With condition \textbf{(C2)}, one has $d \in \mathcal{M}_{N,1}^0$. Otherwise, assuming \textbf{(C2')} and following \cite{Lannes06}, we can write
$$
d(x,\xi) = \Pi_d(x,\xi) + \Sigma_d(0,\xi)
$$
for some $\Pi_d \in \mathcal{N}^0_{s,1}$ and some Fourier multiplier $\Sigma_d(0,\xi) \in \mathcal{M}^0_{\infty,1}$. Precisely,
$$
d(x,\xi) = \frac{\vert \xi \vert}{1 + (c(x) - c_0) \vert \xi \vert + c_0 \vert \xi \vert} = \frac{\vert \xi \vert}{1 + v_1(x) \vert \xi \vert + c_0 \vert \xi \vert} =: \Sigma_d(v(x),\xi),
$$
where $\Sigma_d \in \mathcal{C}^\infty(U ; W^1_{loc}(\R_\xi))$ with $U \subset \R^2$ being a neighborhood  of the image of $v$. Moreover, $\Pi_d(x,\xi) := \Sigma_d(v(x),\xi) - \Sigma_d(0,\xi)$ satisfies that $\Pi_d \in \mathcal{N}^0_{s,1}$ due to Moser's inequality, provided that $\Sigma_d$ is smooth in the image of $v$ (see for instance \cite[Prop. 3.9]{Tay10}).

We next consider the symbol
$$
p(x,\xi) := (1 + c(x) \vert \xi \vert)^{\frac{1}{c(x)}}.
$$
We will denote
\begin{equation}
m_j := c_j^{-1}, \quad j=1,2.
\end{equation}
Notice that $p \in \mathcal{M}_{N,1}^{m_1}$ provided that \textbf{(C2)} holds. Otherwise, in the case of \textbf{(C2')}, as we have done for $d$, we can write
$$
p(x,\xi) = (1 + v_1(x) \vert \xi \vert + c_0 \vert \xi \vert)^{v_2(x) + \frac{1}{c_0}} =: \Sigma_p(v(x),\xi),
$$
and $\Pi_p(x,\xi) = \Sigma_p(v(x),\xi) - \Sigma_p(0,\xi)$ satisfies that $\Pi_p \in \mathcal{N}^{m_1}_{s,1}$ provided that $\Sigma_p$ is smooth in $U$, while $\Sigma_p(0,\xi) \in \mathcal{M}_{\infty,1}^{m_1}$.  Simi\-larly, its inverse $p^{-1} \in \mathcal{M}_{N,1}^{-m_2}$ if \textbf{(C2)}, (resp. $\Pi_{p^{-1}} \in \mathcal{N}_{s,1}^{-m_2}$ and $\Sigma_{p^{-1}}(0,\xi) \in \mathcal{M}_{\infty,1}^{-m_2}$ if \textbf{(C2')}).

\subsection{Comments on possible generalizations}
Notice that
\begin{align*}
\partial_x p(x,\xi) & = \frac{c'(x)\vert \xi \vert}{c(x)}(1 + c(x) \vert \xi \vert)^{\frac{1}{c(x)}-1} - \frac{c'(x)}{c(x)^2}(1+c(x)\vert \xi \vert)^{\frac{1}{c(x)}} \log (1 + c(x) \vert \xi \vert),
\end{align*}
which means that taking derivatives of $p$ in the $x$ variable increases its growth in the $\xi$ variable by a logarithm term. This suggests that condition \textbf{(C1)} can be slightly relaxed to deal with the extremal cases of:
$$
c_1 \leq \inf_{x \in \R} c(x) \leq \sup_{x \in \R} c(x) \leq c_2.
$$
With this assumption and \textbf{(C2)}, one has that $p \in \mathcal{M}^{m_1,\rho}_{N,1}$ and $p^{-1} \in \mathcal{M}^{-m_2,\rho}_{N,1}$ for every $0 < \rho < 1$, where the spaces $\mathcal{M}^{m,\rho}_{j,k}$ are defined in terms of the seminorms
$$
M^{m,\rho}_{j,k}(a) := \sup_{\alpha \leq j, \, \beta \leq k} \, \sup_{(x,\xi) \in \R^2} \, (1 + \vert \xi \vert)^{  \beta - \rho \alpha - m} \big \vert \partial_x^\alpha \partial_\xi^\beta a(x,\xi) \big \vert < \infty.
$$
This corresponds with the symbol classes considered in \cite[Thms. 3 and 5]{Hwang87}, which could be most likely adapted to get our results in this slightly more general form. However, these extensions  do not cover  the Sobolev norms of Definition \ref{d:sobolev_in_x_spaces}, so we prefer to state our admissible hypothesis in the form of Definition \ref{d:admissible} to facilitate the reading.

The condition $c_1^{-1} - c_2^{-1} \leq 1$ in \textbf{(C1)} imposes some restrictions on the global variation of the ``width of the interphase" determined by $c(x)$  in our model problem \eqref{IVP}. This restriction appears as the global gain of regularity (corresponding with the gain of one degree of decayment in the momentum variable $\xi$ of the symbol) when performing the commutators of $p^{-1}(x,tD)$ and $d(x,tD)$, and of $p^{-1}(x,tD)$ and $p(x,tD)$, along the proof of Theorem \ref{t:main_theorem1}. This hypothesis is only needed to obtain the global energy estimate of Theorem \ref{t:main_theorem1}, but it is irrelevant to get the local smoothing properties of $p^{-1}(x,tD)$ of Theorem \ref{t:localization}.

We prove our semiclassical estimates in the one dimensional case $x \in \R$ requiered to treat equation \eqref{IVP}, but our approach also works for symbols in higher dimensions.

The condition $s \geq 1 + \lceil c_1^{-1} \rceil$ in \textbf{(C2')} allows us to avoid the use of paradifferential calculus in our commutator estimates. Extending the techniques of paradifferential calculus used in \cite{Lannes06} to remove this condition in our semiclassical setting, up to our view, seems to be non-trivial at this level of regularity.

\section{Commutator estimates}

In this section we revisit some commutator estimates obtained in \cite{CCF16} and extend them for the proofs of Theorems \ref{t:main_theorem1} and \ref{t:localization}. The main ideas come from \cite{Hwang87}.

\subsection{Preliminary lemmas}

\begin{lemma}{\cite[Lemma 5.6]{CCF16}}
\label{l:elementary}
Let $f \in L^2(\R)$, $\gamma \in H^s(\R)$ for every $s \in \R$, and define
\begin{align}
\label{e:Gamma}
\Gamma^{\pm}_{f,\gamma}(y,\eta) & := \int_\R e^{\mp 2\pi i \eta z} \gamma(y \mp z)f(z) dz.
\end{align}
Then, for every $n \geq 0$,
\begin{align}
\label{e:Gamma_1}
\Vert \partial_y^n \Gamma^{\pm}_{f,\gamma} \Vert_{L^2(\R^2)} = \Vert \gamma^{(n)} \Vert_{L^2} \Vert f \Vert_{L^2}.
\end{align}
Moreover, let $p(y,\eta) \in \mathcal{M}_{n,0}^0$, then
\begin{equation}
\label{e:Gamma_2}
\Vert \partial_y^n \big( p \, \Gamma^{\pm}_{f,\gamma} ) \Vert_{L^2(\R^2)} \leq C M_n(p) \Vert \gamma \Vert_{H^n} \Vert f \Vert_{L^2}.
\end{equation}
\end{lemma}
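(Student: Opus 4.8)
The plan is to prove \eqref{e:Gamma_1} first by a direct computation with the Fourier transform, and then deduce \eqref{e:Gamma_2} from \eqref{e:Gamma_1} via the Leibniz rule together with the definition of the seminorm $M_n(p)$. For \eqref{e:Gamma_1}, I would fix the $+$ case (the $-$ case being identical after $z \mapsto -z$) and observe that $\partial_y^n \Gamma^+_{f,\gamma}(y,\eta) = \int_\R e^{-2\pi i \eta z}\gamma^{(n)}(y-z)f(z)\dif z$, since differentiation in $y$ falls only on $\gamma$. For fixed $\eta$, the map $y \mapsto \partial_y^n\Gamma^+_{f,\gamma}(y,\eta)$ is the convolution $\gamma^{(n)} * (e^{-2\pi i \eta \cdot} f)$ evaluated appropriately; equivalently, taking the Fourier transform in $y$, one gets $\widehat{\gamma^{(n)}}(\zeta)\,\widehat{f}(\zeta - \eta)$ (up to the standard normalization), so that by Plancherel in $y$ and then the change of variables $\zeta \mapsto \zeta - \eta$ in the remaining $\eta$-integral,
$$
\Vert \partial_y^n \Gamma^+_{f,\gamma}\Vert_{L^2(\R^2)}^2 = \int_\R\!\!\int_\R |\widehat{\gamma^{(n)}}(\zeta)|^2 |\widehat{f}(\zeta-\eta)|^2 \dif\zeta\dif\eta = \Vert \widehat{\gamma^{(n)}}\Vert_{L^2}^2\,\Vert\widehat f\Vert_{L^2}^2 = \Vert\gamma^{(n)}\Vert_{L^2}^2\Vert f\Vert_{L^2}^2,
$$
which is \eqref{e:Gamma_1}. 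One should check that $\gamma \in H^s$ for all $s$ guarantees $\gamma^{(n)} \in L^2$ so the right-hand side is finite, and justifies the use of Fubini/Plancherel.

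For \eqref{e:Gamma_2}, I would expand by the Leibniz rule $\partial_y^n(p\,\Gamma^+_{f,\gamma}) = \sum_{k=0}^n \binom{n}{k}(\partial_y^k p)(\partial_y^{n-k}\Gamma^+_{f,\gamma})$. For each term, since $p \in \mathcal{M}_{n,0}^0$ we have $\|\partial_y^k p\|_{L^\infty(\R^2)} \leq M_n(p)$, hence $\|(\partial_y^k p)(\partial_y^{n-k}\Gamma^+_{f,\gamma})\|_{L^2(\R^2)} \leq M_n(p)\,\|\partial_y^{n-k}\Gamma^+_{f,\gamma}\|_{L^2(\R^2)} = M_n(p)\,\|\gamma^{(n-k)}\|_{L^2}\|f\|_{L^2}$ by \eqref{e:Gamma_1}. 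Summing over $k$ and bounding $\sum_{k=0}^n\binom{n}{k}\|\gamma^{(n-k)}\|_{L^2} \leq C_n \|\gamma\|_{H^n}$ yields \eqref{e:Gamma_2} with $C = C_n$.

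The computation is essentially routine; the only point requiring a little care — and the one I would treat as the main (mild) obstacle — is the rigorous justification of the Fourier-side manipulations in \eqref{e:Gamma_1}: one must make sure the integral defining $\Gamma^+_{f,\gamma}(y,\eta)$ and its $y$-derivatives make sense pointwise and in $L^2(\R^2)$, that differentiation under the integral sign is legitimate (using the rapid decay of $\gamma$ and its derivatives, e.g. first for Schwartz $\gamma$ and then by density, or directly via the $H^s$-for-all-$s$ hypothesis), and that the exchange of the order of integration in the double integral above is permitted. Once \eqref{e:Gamma_1} is established as a genuine $L^2(\R^2)$ identity, the passage to \eqref{e:Gamma_2} is immediate.
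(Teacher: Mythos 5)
Your proof is correct and follows essentially the same route as the paper: Plancherel in $y$ to factor the transform of $\partial_y^n\Gamma^\pm_{f,\gamma}$ as $\widehat{\gamma^{(n)}}$ times a shifted $\widehat{f}$, then Fubini and Plancherel again for \eqref{e:Gamma_1}, and the Leibniz rule with $L^\infty$ bounds on $\partial_y^j p$ combined with \eqref{e:Gamma_1} for \eqref{e:Gamma_2}. The only cosmetic difference is the sign in the Fourier argument ($\widehat{f}(\zeta-\eta)$ versus $\mathcal{F}_\pm f(\lambda+\eta)$), which is immaterial after integration in $\eta$.
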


\begin{proof}
By Plancherel in the $y$-variable, we have
$$
\int_{\R^2} \vert \partial_y^n \Gamma^{\pm}_{f,\gamma}(y,\eta) \vert^2 dy d\eta = \int_{\R^2} \vert \widehat{\partial_y^n \Gamma^{\pm}}_{f,\gamma}(\cdot, \eta)(\lambda) \vert^2 d\lambda d\eta.
$$
On the other hand,
$$
\widehat{\partial_y^n \Gamma^{\pm}}_{f,\gamma}(\cdot, \eta)(\lambda) = \widehat{\gamma^{(n)}}(\lambda) \mathcal{F}_{\pm}f(\lambda + \eta),
$$
where $\mathcal{F}_+$ is the Fourier transform and $\mathcal{F}_-$ its inverse. By Fubini and Plancherel, we conclude \eqref{e:Gamma_1}.  To show \eqref{e:Gamma_2}, we use the product rule to write
$$
\partial_y^n \big( p(y,\eta) \Gamma^{\pm}_{f,\gamma}(y,\eta) \big) = \sum_{j=0}^n {n \choose j} \partial_y^j p(y,\eta) \, \partial_y^{n-j} \Gamma^{\pm}_{f,\gamma}(y,\eta).
$$
Hence \eqref{e:Gamma_2} follows applying \eqref{e:Gamma_1} and
$$
\Vert \partial_y^j p \, \partial_y^{n-j} \Gamma^{\pm}_{f,\gamma} \Vert_{L^2(\R^2)} \leq \Vert \partial_y^j p \Vert_{L^\infty(\R^2)} \Vert \partial_y^{n-j} \Gamma^{\pm}_{f,\gamma} \Vert_{L^2(\R^2)}.
$$
\end{proof}

\begin{lemma}{\cite[Lemma 5.9]{CCF16}}
\label{l:take_2d}
Let $Q(x,\xi)$, define
$$
A_Q(x) := \int_\R e^{2\pi i x\xi} Q(x,\xi) d\xi.
$$
Then
$$
\Vert A_Q \Vert_{L^2(\R)} \leq C \Vert (1-\partial_x) Q \Vert_{L^2(\R^2)}.
$$
\end{lemma}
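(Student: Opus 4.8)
The plan is to reduce the claimed inequality to the one-dimensional Sobolev embedding $H^1(\R)\hookrightarrow L^\infty(\R)$ in the position variable, combined with Plancherel in the frequency variable. First I would prove the estimate for $Q\in\mathscr{S}(\R^2)$, so that all the integrals below converge absolutely and differentiation under the integral sign is legitimate; the general case (with $(1-\partial_x)Q\in L^2(\R^2)$) then follows by a routine density argument, since the right-hand side controls both $\|Q\|_{L^2(\R^2)}$ and $\|\partial_x Q\|_{L^2(\R^2)}$, and $Q\mapsto A_Q$ will be seen to be $L^2$-bounded with respect to that norm.

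The key observation is to write $A_Q$ as the restriction to the diagonal of a function of two variables. Set
\[
K(x,z):=\int_\R e^{2\pi i z\xi}\,Q(x,\xi)\,d\xi ,
\]
i.e.\ the inverse Fourier transform of $Q$ in its second slot, so that $A_Q(x)=K(x,x)$. For each fixed $z$, the map $x\mapsto K(x,z)$ lies in $H^1(\R_x)$, and the one-dimensional Sobolev inequality gives
\[
|K(z,z)|^2\ \le\ \|K(\cdot,z)\|_{L^\infty(\R_x)}^2\ \le\ C\,\|(1-\partial_x)K(\cdot,z)\|_{L^2(\R_x)}^2 ,
\]
with $C$ an absolute constant (recall that $\|(1-\partial_x)u\|_{L^2}^2=\|u\|_{L^2}^2+\|u'\|_{L^2}^2$, so the middle term is exactly $C\|K(\cdot,z)\|_{H^1}^2$). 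Integrating in $z$,
\[
\|A_Q\|_{L^2(\R)}^2=\int_\R|K(z,z)|^2\,dz\ \le\ C\int_{\R}\!\int_{\R}|(1-\partial_x)K(x,z)|^2\,dx\,dz .
\]
Finally, for each fixed $x$ the function $z\mapsto (1-\partial_x)K(x,z)=\int_\R e^{2\pi i z\xi}\,\big((1-\partial_x)Q\big)(x,\xi)\,d\xi$ is the inverse Fourier transform of $\xi\mapsto\big((1-\partial_x)Q\big)(x,\xi)$, so Plancherel in the conjugate variables $z\leftrightarrow\xi$ yields $\|(1-\partial_x)K\|_{L^2(\R^2)}=\|(1-\partial_x)Q\|_{L^2(\R^2)}$, which closes the estimate.

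The computation is essentially routine; the only points deserving care are the justification of Fubini and of differentiating under the integral when passing from $Q\in\mathscr{S}(\R^2)$ to the general case, and recording the explicit Sobolev constant. The conceptual content — and the reason the bound "gains" the $\xi$-integration for free — is that restricting a function of $(x,z)$ to the diagonal costs only half a derivative, which in one dimension is supplied by the single $x$-derivative in $(1-\partial_x)Q$; no decay of $Q$ in $\xi$ is needed, because the $\xi$-integral is first turned into an $L^2_z$-norm by Plancherel and only afterwards is the Sobolev embedding applied.
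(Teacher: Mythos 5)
Your argument is correct but follows a route genuinely different from the paper's. The paper proves the estimate by duality: it pairs $A_Q$ with a test function $g\in L^2(\R)$, expands $g$ via its Fourier transform, integrates by parts once in $x$ to move $(1-\partial_x)$ onto $Q$, recognises the resulting oscillatory integral in $\lambda$ as one of the auxiliary kernels $\Gamma^{-}_{\widehat g,\gamma}$ from Lemma~\ref{l:elementary}, and closes with Cauchy--Schwarz together with the $L^2(\R^2)$-identity $\Vert\Gamma^{-}_{\widehat g,\gamma}\Vert_{L^2(\R^2)}=\Vert\gamma\Vert_{L^2}\Vert g\Vert_{L^2}$. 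You instead recast $A_Q$ as the diagonal restriction $K(x,x)$ of the partial inverse Fourier transform $K(x,z)$, control the diagonal pointwise by the one-dimensional Sobolev embedding $H^1(\R_x)\hookrightarrow L^\infty(\R_x)$ (using that $\Vert(1-\partial_x)u\Vert_{L^2}^2=\Vert u\Vert_{H^1}^2$), integrate in $z$, and transfer back to $\xi$ by Plancherel. The two arguments rest on the same analytic fact, the square-integrability of $(1+|\cdot|)^{-1}$: in the paper it enters as $\Vert\gamma\Vert_{L^2}$ with $\gamma(w)=(1+2\pi i w)^{-1}$, in yours as the Sobolev constant. Your version is more self-contained and conceptually transparent (it isolates the ``trace on the diagonal costs half a derivative'' heuristic), and it avoids Lemma~\ref{l:elementary} entirely; the paper's phrasing is deliberately aligned with the $\Gamma$-machinery it reuses throughout Section~3, so the duality route there is a matter of economy rather than necessity. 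Your preliminary reduction to $Q\in\mathscr{S}(\R^2)$ by density is a reasonable way to justify Fubini and differentiation under the integral, a point the paper leaves implicit.
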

\begin{proof}
We will prove the Lemma by duality. Let $g \in L^2(\R)$, we have
\begin{align*}
\int_\R A_Q(x) g(x) dx & = \int_{\R^2} e^{2\pi i x \xi} Q(x,\xi) g(x) d\xi dx  \\[0.2cm]
 & = \int_{\R^3} e^{2\pi i x (\xi + \lambda)} Q(x,\xi) \widehat{g}(\lambda) d\lambda d\xi dx.
\end{align*}
Integrating by parts in the $x$-variable, we obtain
$$
\int_\R A_Q(x) g(x) dx = \int_{\R^2} (1 - \partial_x) Q(x,\xi) \Gamma^{-}_{\widehat{g},\gamma}(\xi,x) e^{2\pi i x \cdot \xi} dx d\xi,
$$
where $\gamma$ is given by \eqref{e:special_function}. Then, applying the Cauchy-Schwartz inequality and Lemma \ref{l:elementary}, we conclude that
$$
\left \vert \int_\R A_Q(x) g(x) dx \right \vert \leq C \Vert (1-\partial_x)Q \Vert_{L^2(\R^2)} \Vert \Vert g \Vert_{L^2(\R)}.
$$
\end{proof}

\subsection{Commutator estimates}
Let $p_1$ and $p_2$ be two symbols, we define:
\begin{equation}
\label{e:semicommutator}
\mathfrak{C}(p_1,p_2) := p_1(x,D) p_2(x,D) - p_1 p_2(x,D).
\end{equation}
\begin{lemma}\cite[Thm. 5.2]{CCF16}
\label{l:castro}
Let $p_1 \in \mathcal{M}_{1,1}^0$ and $p_2 \in \mathcal{M}_{2,1}^0$. Then
\begin{equation}
\label{e:castro_estimate}
\Vert \mathfrak{C}(p_1,p_2) \Vert_{\mathcal{L}(L^2)} \lesssim M_1( \partial_\xi p_1 ) M_2( p_2) + M_1( p_1) M_1( \partial_\xi p_2 )  + M_1( \partial_\xi p_1 ) M_1( \partial_\xi p_2 ).
\end{equation}
\end{lemma}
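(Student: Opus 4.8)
The plan is to follow the oscillatory-integral method of Hwang, in the form used for \cite[Thm. 5.2]{CCF16}. Writing $p_1(x,D)$ and $p_2(x,D)$ through \eqref{e:canonical_quantization} and inserting $\widehat f$, one gets, as an oscillatory integral,
\[
\mathfrak{C}(p_1,p_2)f(x) = \int_{\R^4} e^{2\pi i\xi(x-y)}e^{2\pi i\eta(y-z)}\big[p_1(x,\xi)p_2(y,\eta) - p_1(x,\eta)p_2(x,\eta)\big]\,f(z)\,dz\,dy\,d\eta\,d\xi,
\]
since the diagonal amplitude $p_1(x,\eta)p_2(x,\eta)$ precisely reproduces $(p_1p_2)(x,D)f$ (the $\xi$- and $y$-integrations collapsing to $\delta(\xi-\eta)$). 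First I would telescope the bracket around the diagonal $\{\xi=\eta,\ y=x\}$ as
\[
p_1(x,\xi)\big[p_2(y,\eta)-p_2(y,\xi)\big] + \big[p_1(x,\xi)-p_1(x,\eta)\big]p_2(y,\xi) + p_1(x,\eta)\big[p_2(y,\xi)-p_2(x,\xi)\big] + p_1(x,\eta)\big[p_2(x,\xi)-p_2(x,\eta)\big],
\]
the last summand integrating to zero because its amplitude does not depend on $y$, so the $y$-integral forces $\xi=\eta$, where the bracket vanishes. Each remaining difference is then rewritten with a first-order Taylor remainder, $a(\cdot,u)-a(\cdot,v)=(u-v)\int_0^1(\partial_\xi a)(\cdot,v+t(u-v))\,dt$ (and the analogous formula in $x$), producing a monomial prefactor $\xi-\eta$ or $y-x$ times an integrated first derivative of $p_1$ or of $p_2$.

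Next, each monomial prefactor is transferred onto the phase by integration by parts in the intermediate variables: $\xi-\eta$ (up to a constant) is $\partial_y$ of the phase, hence becomes a spatial derivative falling on $p_2$ (the $p_1$-factors being untouched, as they do not depend on $y$), and $y-x$ is $\partial_\xi$ of the phase, hence becomes a frequency derivative on $p_2$. The three surviving operators then have amplitudes of the schematic shape $p_1\cdot(\partial_x\partial_\xi p_2)$, $(\partial_\xi p_1)\cdot(\partial_x p_2)$ and $p_1\cdot(\partial_x\partial_\xi p_2)$, multiplied by a residual one-dimensional oscillatory integral in $\xi$. That integral is made absolutely convergent by the Green's-function device of Lemma \ref{l:take_2d} (the auxiliary function of \eqref{e:special_function} together with one integration by parts in $x$), and, in the term still lacking any $\xi$-decay, by one further integration by parts in $\xi$; each of these steps costs one more derivative, which may land on $p_1$ — at most once in $x$, since $p_1\in\mathcal{M}_{1,1}^0$ — or on $p_2$, but never as a forbidden $\partial_\xi^2 p_1$ nor as a third derivative of $p_2$. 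Tallying the admissible placements produces exactly the three products $M_1(\partial_\xi p_1)M_2(p_2)$, $M_1(p_1)M_1(\partial_\xi p_2)$ and $M_1(\partial_\xi p_1)M_1(\partial_\xi p_2)$ of \eqref{e:castro_estimate}.

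To finish the $L^2$-bound of each term, I would pair it with $g\in L^2$, expand $f$ and $g$ by Fourier inversion, and reorganize the resulting multilinear integral so that $f$ and $\widehat g$ enter through the functions $\Gamma^+_{f,\gamma}$ and $\Gamma^-_{\widehat g,\gamma}$ of Lemma \ref{l:elementary}, with $\gamma$ the fixed function of \eqref{e:special_function} absorbing the regularizing weights and the bounded symbol factors extracted in $L^\infty(\R^2)$ as the $M$-seminorms above; a Cauchy--Schwarz inequality in the $(y,\eta)$-variables and \eqref{e:Gamma_1}--\eqref{e:Gamma_2} then bound the term by $C\cdot(\text{seminorms})\cdot\|f\|_{L^2}\|g\|_{L^2}$, and the supremum over $\|g\|_{L^2}\le1$ gives \eqref{e:castro_estimate}. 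The main obstacle is the low regularity of $p_1$: having only one $\xi$-derivative and one $x$-derivative, no second-order Taylor remainder in $\xi$ and no second spatial derivative of $\partial_\xi p_1$ are at our disposal, so the residual $\xi$-oscillatory integrals cannot be made absolutely convergent through symbol decay and must be controlled entirely by the $\Gamma$-function mechanism; the delicate point is the combinatorics of distributing the one or two regularizing derivatives so that each lands on a licit factor while the diagonal contributions genuinely cancel.
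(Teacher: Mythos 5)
Your overall strategy — representing $\mathfrak{C}(p_1,p_2)$ as an oscillatory integral, telescoping the amplitude around the diagonal, converting differences to first derivatives by the mean-value theorem, and regularizing with the $\gamma$-device of \eqref{e:special_function} together with Lemma~\ref{l:take_2d} and Lemma~\ref{l:elementary} — is the Hwang-type approach the paper also follows. But your telescoping path is different, and that difference is not merely cosmetic: it creates a gap.

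The paper takes the two-step path $(x,\xi;y,\eta)\to(x,\eta;y,\eta)\to(x,\eta;x,\eta)$, so that the middle difference $p_1(x,\eta)\big(p_2(y,\eta)-p_2(x,\eta)\big)$ is $\xi$\emph{-independent}; the $\xi$-integral therefore produces $\delta(x-y)$ and that term vanishes outright. Exactly \emph{one} nontrivial term survives, namely $\big(p_1(x,\xi)-p_1(x,\eta)\big)p_2(y,\eta)$, and it factors as $a_1(x,\xi,\eta)\,a_2(y,\eta)$. This factorization is not optional: the chain Cauchy--Schwarz in $\eta$ with $\gamma_1(\xi-\eta)$, Plancherel in $x$ via the convolution structure of $\gamma_2(x-y)$, and then Plancherel in $\xi$ works precisely because the factor multiplying $\Gamma^+_{f,\gamma_3}(y,\eta)$ depends only on $(y,\eta)$.

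Your four-step path leaves three surviving terms, and all three break this factorization. In your term $[p_1(x,\xi)-p_1(x,\eta)]\,p_2(y,\xi)$, the $p_2$-factor is evaluated at $\xi$, so the inner $y$-integral has a $(y,\xi)$-coupled factor multiplying $\Gamma^+(y,\eta)$; the Plancherel-in-$\xi$ step then requires bounding $\int\big|\int e^{-2\pi i\xi y}p_2(y,\xi)h(y)\,dy\big|^2 d\xi$, which is the $L^2$-norm of a (right-quantized) pseudodifferential operator acting on $h$ --- precisely the class of estimate we are trying to prove, not one we may assume. Your term $p_1(x,\xi)\big[p_2(y,\eta)-p_2(y,\xi)\big]$ has the same defect: its Taylor remainder $\int_0^1\partial_\xi p_2(y,\xi+t(\eta-\xi))\,dt$ depends jointly on $(y,\xi,\eta)$, so it fits neither the $a_1(x,\xi,\eta)$ slot nor the $a_2(y,\eta)$ slot. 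Your third term $p_1(x,\eta)\big[p_2(y,\xi)-p_2(x,\xi)\big]$, after the Taylor-in-$x$ step you describe, produces a remainder $\int_0^1\partial_x\partial_\xi p_2(x+t(y-x),\xi)\,dt$ that ties together $x$, $y$ and $\xi$ simultaneously; this destroys the convolution structure of $\gamma_2(x-y)$ that the Plancherel-in-$x$ step relies on, so the $I_\iota$-type identity in the proof of Lemma~\ref{l:castro} does not go through. In short, the phrase ``the bounded symbol factors extracted in $L^\infty(\R^2)$'' glosses over the fact that extraction in $L^\infty$ is incompatible with the subsequent $L^2$-Plancherel over the very same variables.

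So the gap is the choice of telescoping: by flipping $\eta\to\xi$ in $p_2$ before flipping $\xi\to\eta$ in $p_1$, you place $p_2$ at the $\xi$-argument in the surviving terms, and by also producing an $x$--$y$ difference of $p_2$, you introduce jointly-coupled amplitudes that the $\Gamma$/Plancherel mechanism cannot process. The fix is simply to use the paper's shorter telescoping: flip $\xi\to\eta$ in $p_1$ only, check that the $y\to x$ flip in $p_2$ (at $\eta$) is killed by the $\xi$-delta, and then run the IBP in $\eta,\xi,y$ plus Lemma~\ref{l:take_2d} and Lemma~\ref{l:elementary} on the single remaining term, whose clean $a_1(x,\xi,\eta)\,a_2(y,\eta)$ form makes the $L^\infty$/Plancherel bookkeeping consistent and yields exactly the three products in \eqref{e:castro_estimate}.
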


\begin{remark}
An extended proof of Lemma \ref{l:castro} is given in \cite{CCF16}. We next rewrite the same proof in a more compact form, because some of the ideas will be used later on.
\end{remark}

\begin{proof}  We start by writing the expressions of $p_1(x,D) p_2(x,D)f$ and $p_1p_2(x,D)f$:
$$
p_1(x,D) p_2(x,D)f(x) = \int_{\R^3} e^{2\pi i (x\xi - \xi y + y \eta)} p_1(x,\xi) p_2(y,\eta) \widehat{f}(\eta) d\eta dy d\xi,
$$
while
$$
p_1p_2(x,D)f(x) = \int_{\R} e^{2\pi i x\eta} p_1(x,\eta)p_2(x,\eta) \widehat{f}(\eta) d\eta .
$$
Therefore, using that
\begin{align*}
p_1(x,\xi)p_2(y,\eta) & = \big( p_1(x,\xi)- p_1(x,\eta) \big) p_2(y,\eta) \\[0.2cm]
 & \quad + p_1(x,\eta) \big( p_2(y,\eta) - p_2(x,\eta) \big) + p_1(x,\eta) p_2(x,\eta),
\end{align*}
and the fact that, in the sense of distributions,
$$
\int_\R e^{2\pi i(x-y)\xi} d\xi = \delta(x-y),
$$
we obtain
\begin{equation}
\label{e:integral_formula_commutator}
\mathfrak{C}(p_1,p_2)f(x) = \int_{\R^3} e^{2\pi i(x\xi - \xi y + y \eta)} \big( p_1(x,\xi) - p_1(x,\eta) \big) p_2(y,\eta) \widehat{f}(\eta) d\eta dy d\xi.
\end{equation}
By the Fourier inversion formula,
$$
\mathfrak{C}(p_1,p_2)f(x) = \int_{\R^4} e^{2\pi i(x\xi - \xi y + y \eta - \eta z)}\big( p_1(x,\xi) - p_1(x,\eta) \big) p_2(y,\eta) f(z) dz d\eta dy d\xi.
$$
Using next the identities
\begin{align}
\label{e:equivalencie_1}
\frac{1}{1 + 2\pi i (y-z)} (1 + \partial_\eta) e^{2\pi i (y-z) \eta} & = e^{2\pi i (y-z)\eta}, \\[0.2cm]
\label{e:equivalencie_2}
\frac{1}{1 + 2\pi i (x-y)} (1 + \partial_\xi) e^{2\pi i (x-y) \xi} & = e^{2\pi i (x-y) \xi}, \\[0.2cm]
\label{e:equivalencie_3}
\frac{1}{1 + 2\pi i (\xi-\eta)} (1 + \partial_y) e^{2\pi i (\xi-\eta) y} & = e^{2\pi i (\xi-\eta)y},
\end{align}
we integrate by parts in $\eta$, $\xi$ and $y$ successively to get
\begin{align*}
\mathfrak{C}(p_1,p_2)f(x) & \\[0.2cm]
 & \hspace*{-2cm} = \int_{\R^3} e^{2\pi i \sigma(x,\xi,y,\eta)} \gamma(\xi-\eta) \mathcal{D}_y \big( \gamma (x-y)\Gamma^{+}_{f,\gamma}(y,\eta) \mathcal{D}_\xi  \mathcal{D}_\eta \big( (p_1(x,\xi) - p_1(x,\eta)) p_2(y,\eta) \big) \big)  d\eta dy d\xi,
\end{align*}
where $\sigma(x,\xi,y,\eta) = x \xi - \xi y + y \eta$, the differential operators $\mathcal{D}_w = 1 - \partial_w$ act on all the functions on its right (via the product rule), and
\begin{equation}
\label{e:special_function}
 \gamma(w) = \frac{1}{1 + 2\pi i w}.
\end{equation}
Expanding the derivatives by the product rule, we reach to a sum of terms of the form:
\begin{equation}
\label{e:T_term}
T_\iota f(x) =  \int_{\R^3} e^{2\pi i \sigma(x,\xi,y,\eta)} \gamma_1^\iota(\xi-\eta) \gamma_2^\iota(x-y)  a_1^\iota(x,\xi,\eta) a_2^\iota(y,\eta) \Gamma^{+}_{f,\gamma_3^\iota}(y,\eta) d\eta dy d\xi,
\end{equation}
for some explicit functions $a_1^\iota,a_2^\iota$ and some $\gamma_1^\iota, \gamma_2^\iota, \gamma_3^\iota$ given by some derivatives of the function \eqref{e:special_function}. Using Lemma \ref{l:take_2d} yields that
$$
\Vert T_\iota f \Vert_{L^2(\R)} \leq \Vert \mathcal{D}_x G_\iota \Vert_{L^2(\R_x \times \R_\xi)},
$$
where
$$
G_\iota(x,\xi) = \int_{\R^2} e^{2\pi i( \eta - \xi)y} \gamma_1^\iota(\xi-\eta) \gamma_2^\iota(x-y)  a_1^\iota(x,\xi,\eta) a_2^\iota(y,\eta) \Gamma^{+}_{f,\gamma_3^\iota}(y,\eta) d\eta dy.
$$
By the Cauchy-Schwartz inequality, we get
\begin{equation}
\label{e:after_cauchy_schwartz}
\vert \mathcal{D}_x G_\iota(x,\xi) \vert^2 \leq \Vert \gamma_1^\iota \Vert_{L^2}^2 \int_{\R} \left \vert \int_\R e^{2\pi i( \eta - \xi)y}  \mathcal{D}_x \gamma_2^\iota(x-y)  a_1^\iota(x,\xi,\eta) a_2^\iota(y,\eta) \Gamma^{+}_{f,\gamma_3^\iota}(y,\eta) dy \right \vert^2 d\eta.
\end{equation}
Expanding the derivatives in $x$, we obtain
$$
\Vert \mathcal{D}_x G_\iota \Vert^2_{L^2(\R^2)}  \lesssim \Vert \gamma_1^\iota \Vert_{L^2}^2 \Vert \mathcal{D}_x a^\iota_1 \Vert_{L^\infty(\R^3)}^2 I_\iota,
$$
where
$$
I_\iota = \int_{\R^3} \left \vert \int_\R e^{2\pi i( \eta - \xi)y}  \mathcal{D}_x \gamma_2^\iota(x-y) a_2^\iota(y,\eta) \Gamma^{+}_{f,\gamma^\iota_3}(y,\eta) dy \right \vert^2 d\eta dx d\xi.
$$
We next do Plancherel in $x$, then Fubini to integrate first with respect to $\xi$ and conclude again with Plancherel with real variable $y$ and Fourier variable $\xi$:
\begin{align*}
I_\iota & = \int_{\R^3} \vert \widehat{\mathcal{D}_x \gamma^\iota_2}(\lambda) \vert^2 \left \vert \int_\R e^{2\pi i(\eta - \xi - \lambda)y} a_2^\iota(y,\eta) \Gamma^{+}_{f,\gamma^\iota_3}(y,\eta) dy \right \vert^2 d\xi d \lambda d\eta \\[0.2cm]
 & =  \int_{\R} \vert \widehat{\mathcal{D}_x \gamma^\iota_2}(\lambda) \vert^2  \int_{\R^2}\left \vert  e^{2\pi i(\eta  - \lambda)y} a^\iota_2(y,\eta) \Gamma^{+}_{f,\gamma^\iota_3}(y,\eta) \right \vert^2 dy d \lambda d\eta \\[0.2cm]
 & \leq C \Vert \gamma_2^\iota \Vert_{H^1}^2  \int_{\R^2} \left \vert a^\iota_2(y,\eta) \Gamma^{+}_{f,\gamma_3^\iota}(y,\eta) \right \vert^2 dy  d\eta.
\end{align*}
Finally, using Lemma \ref{l:elementary}, we conclude that
$$
I_\iota \leq C \Vert \gamma^\iota_2 \Vert_{H^1}^2 M_0( a^\iota_2 )^2 \Vert f \Vert^2_{L^2}.
$$

Notice also that, in some of the terms $T_\iota$, it appears
$$
a^\iota_1(x,\xi,\eta) =  p_1(x,\xi) - p_1(x,\eta).
$$
In order to estimate this factor in terms of $\partial_\xi p_1$, it is necessary to integrate by parts one more time in $y$, using the identity
\begin{equation}
\label{e:identity_w_r_t_y}
\frac{1}{2\pi i (\xi-\eta)} \partial_y \, e^{2\pi i (\xi-\eta) y}  = e^{2\pi i (\xi-\eta)y},
\end{equation}
to obtain a new function
\begin{align}
\label{e:a_1_dagger}
a^\dagger_1(x,\xi,\eta) = \frac{p_1(x,\xi) - p_1(x,\eta)}{2\pi i (\xi - \eta)},
\end{align}
which, by the mean value theorem, satisfies
\begin{align*}
 \Vert \mathcal{D}_x a^\dagger_1 \Vert_{L^\infty(\R^3)} & \leq M_1(\partial_\xi p_1).
\end{align*}

Taking into account all the derivatives of the symbols $p_1$ and $p_2$ we have performed in each term $T_\iota$, we obtain
$$
\Vert \mathfrak{C}(p_1,p_2) f \Vert_{L^2} \lesssim \Big( M_1( \partial_\xi p_1) M_2( p_2 ) + M_1( p_1) M_1( \partial_\xi p_2)  + M_1( \partial_\xi p_1 ) M_1( \partial_\xi p_2 ) \Big) \Vert f \Vert_{L^2},
$$
as we wanted to prove.
\end{proof}

We next explain how Lemma \ref{l:castro} applies to the semiclassical framework. To this aim, let us introduce first the following notation for the semiclassical non-principal part of the composition of  two semiclassical operators $p_1(x,tD)$, and $p_2(x,tD)$:
\begin{equation}
\label{e:approximate_inverse}
\mathfrak{C}_t(p_1,p_2) := p_1(x,tD) p_2(x,tD)- p_1 p_2(x,tD), \quad t \in (0,1].
\end{equation}
We  will also consider a localized version of $\mathfrak{C}_t(p_1,p_2)$ near the diagonal of $\R_\xi \times \R_\eta$. Given $\varphi \in \mathcal{C}_c^\infty(\R)$, we define:
\begin{equation}
\label{e:localized_semiclassical_commutator}
\mathfrak{C}_{t,\varphi}(p_1,p_2)f(x) := \int_{\R^3} e^{2\pi i (x\xi - \xi y + y\eta)} \varphi\big( t(\xi - \eta) \big) \big( p_1(x,t\xi) - p_1(x,t\eta) \big) p_2(y,t\eta) \widehat{f}(\eta) d\eta dy d\xi.
\end{equation}
Let
\begin{equation}
\label{e:convex_hull}
I_\varphi := \operatorname{Conv}\big(  \supp \varphi \cup \{ 0 \} \big)
\end{equation}
be the closed interval obtained as the convex hull of $\supp \varphi \cup \{ 0  \}$, and let $\vert I_\varphi \vert$ be the Lebesgue measure of $I_\varphi$. The following holds:
\begin{corol}
\label{c:localized_castro}
Let $m_1, m_2 \in \R$ such that $ m_1 + m_2 - 1 \leq 0$.  Set $\mu = \max \{ m_1,m_2,0 \}$. Let $p_1 \in \mathcal{M}^{m_1}_{1,1}$ and $p_2 \in \mathcal{M}^{m_2}_{2,1}$. Then, for every $\varphi \in \mathcal{C}_c^\infty(\R)$,
\begin{align}
\label{e:localized_semiclassical_estimate}
\Vert \mathfrak{C}_{t,\varphi}(p_1,p_2) \Vert_{\mathcal{L}(L^2)} \lesssim t \, \Vert \varphi \Vert_{W^{1,\infty}(\R)} \vert I_\varphi \vert^{\mu} \, \mathfrak{M}(p_1,p_2),
\end{align}
where
$$
\mathfrak{M}(p_1,p_2) =  M_{1,0}^{m_1-1}( \partial_\xi p_1) M_{2,0}^{m_2}( p_2 ) + M_{1,0}^{m_1}( p_1) M_{1,0}^{m_2-1}( \partial_\xi p_2)  + M_{1,0}^{m_1-1}( \partial_\xi p_1 ) M_{1,0}^{m_2-1}( \partial_\xi p_2).
$$
\end{corol}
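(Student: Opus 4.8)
The plan is to reduce the estimate to a suitably rescaled version of Lemma \ref{l:castro}, keeping careful track of two things: the semiclassical parameter $t$ coming from the frequency variable being $t\xi$ rather than $\xi$, and the cutoff $\varphi(t(\xi-\eta))$, which localizes the integral near the diagonal and is responsible for the factor $t\,\Vert\varphi\Vert_{W^{1,\infty}}\vert I_\varphi\vert^\mu$. First I would undo the semiclassical scaling: using the unitary dilation $U_t$ of Remark \ref{r:isometry} (or, equivalently, the substitution $\xi\mapsto\xi/t$, $\eta\mapsto\eta/t$, $y\mapsto ty$ in \eqref{e:localized_semiclassical_commutator}), the operator $\mathfrak{C}_{t,\varphi}(p_1,p_2)$ becomes unitarily equivalent to a non-semiclassical operator of the same shape as the integral \eqref{e:integral_formula_commutator}, but with $p_1(x,\xi)$ replaced by $p_1^t(x,\xi):=p_1(tx,\xi)$, with $p_2$ replaced by $p_2^t(x,\xi):=p_2(tx,\xi)$, and with an extra factor $\varphi(\xi-\eta)$ inserted in the integrand (after rescaling, the cutoff becomes $\varphi$ of the difference of the unscaled frequencies).

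Next I would rerun the proof of Lemma \ref{l:castro} verbatim on this rescaled, cutoff operator. The point is that the presence of the additional smooth factor $\varphi(\xi-\eta)$ does not disrupt any step: in the integrations by parts \eqref{e:equivalencie_1}--\eqref{e:equivalencie_3} and \eqref{e:identity_w_r_t_y}, the variable-coefficient operators $\mathcal{D}_\eta$, $\mathcal{D}_\xi$, $\mathcal{D}_y$ now also hit $\varphi(\xi-\eta)$, producing at worst one extra factor of $\varphi$ or $\varphi'$, which is bounded by $\Vert\varphi\Vert_{W^{1,\infty}}$; moreover $\varphi(\xi-\eta)$ is supported in $\{|\xi-\eta|\le\operatorname{diam} I_\varphi\}$, so it can be absorbed into the $\gamma_1^\iota(\xi-\eta)$-type factors that are estimated in $L^2$ in the $(\xi-\eta)$-variable, at the cost of a factor $\vert I_\varphi\vert^{1/2}$ (in fact the cutoff only helps there). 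The new feature relative to Lemma \ref{l:castro} is the nonzero orders $m_1,m_2$: wherever a factor $a_1^\iota$ or $a_2^\iota$ (or $a_1^\dagger$ from \eqref{e:a_1_dagger}) appeared, one now gets instead a factor bounded by the appropriate $M^{m}_{\cdot,\cdot}$-seminorm times a power of $(1+|\eta|)$ — namely $(1+|\eta|)^{m_1}$, $(1+|\eta|)^{m_2}$, or, after the mean-value-theorem step, $(1+\max(|\xi|,|\eta|))^{m_1-1}$ etc. On the support of $\varphi(\xi-\eta)$ one has $(1+|\xi|)\le(1+|\eta|)+\vert I_\varphi\vert$, so all these weights are comparable to $(1+|\eta|)^{m_1}$ and $(1+|\eta|)^{m_2}$ respectively up to the factor $\vert I_\varphi\vert^\mu$ with $\mu=\max\{m_1,m_2,0\}$; and the product of the two surviving weights is $(1+|\eta|)^{m_1+m_2-1}$, which is bounded (by $1$) precisely because of the hypothesis $m_1+m_2-1\le 0$. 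This is what allows the remaining factor $(1+|\eta|)^{m_1+m_2-1}$ to be pulled out of the integral against $\widehat f(\eta)$ without touching the $L^2$ norm of $f$.

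Finally I would collect the constants: one factor of $t$ comes from the scaling of the measure/phase in \eqref{e:localized_semiclassical_commutator} relative to \eqref{e:integral_formula_commutator} exactly as in the passage from Lemma \ref{l:castro} to a semiclassical statement (the semicommutator gains one power of the small parameter, which here is $t$, because one differentiation in $\xi$ of a semiclassical symbol $p(x,t\xi)$ produces a factor $t$); the factor $\Vert\varphi\Vert_{W^{1,\infty}}$ and one half-power $\vert I_\varphi\vert^{1/2}$ from each of the $\varphi$-modified $\gamma$-factors combine into $\Vert\varphi\Vert_{W^{1,\infty}}\vert I_\varphi\vert$ up to harmless constants; and the weight bookkeeping above yields $\vert I_\varphi\vert^\mu$ and the seminorm combination $\mathfrak{M}(p_1,p_2)$, noting that the seminorms of $p_i^t$ coincide with those of $p_i$ since $M$-seminorms are translation/dilation-invariant in $x$. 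Putting these together gives \eqref{e:localized_semiclassical_estimate}. The main obstacle, and the only place that requires genuine care rather than bookkeeping, is verifying that every intermediate $L^\infty$ estimate on the symbol factors is uniform over the support of $\varphi(\xi-\eta)$ — i.e. that replacing the exact frequencies by comparable ones costs only $\vert I_\varphi\vert^\mu$ and not a worse power — and, relatedly, checking that the condition $m_1+m_2-1\le 0$ is exactly what makes the residual weight in $\eta$ integrable against $L^2$; I would isolate the generic term $T_\iota$ as in \eqref{e:T_term} and track the $(1+|\eta|)$-powers through the $I_\iota$ computation rather than redoing the algebra for each $\iota$ separately.
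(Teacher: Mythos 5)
Your first move — undoing the semiclassical scaling via the unitary dilation $U_t$ of Remark \ref{r:isometry} — is a legitimate reduction: one checks that $U_t^*\mathfrak{C}_{t,\varphi}(p_1,p_2)U_t$ equals the non-semiclassical cutoff commutator built from $p_i^t(x,\xi):=p_i(tx,\xi)$ and the cutoff $\varphi(\xi-\eta)$. But this step moves the parameter $t$ into the \emph{position} variable, not the momentum variable, and this is exactly where your argument for the factor of $t$ breaks down. You attribute the $t$ to the fact that ``one differentiation in $\xi$ of a semiclassical symbol $p(x,t\xi)$ produces a factor $t$'': true, but after conjugation your symbols have the form $p(tx,\xi)$, and $\partial_\xi p(tx,\xi)=(\partial_\xi p)(tx,\xi)$ carries no power of $t$. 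Consequently, rerunning Lemma \ref{l:castro} (including the mean-value step that produces $a_1^\dagger$) on the conjugated operator yields a bound in terms of $M_1(\partial_\xi p_1^t)$, $M_2(p_2^t)$, etc., and these seminorms are merely bounded above, uniformly in $t$, by those of $p_1$, $p_2$; there is no gain of a factor $t$ (the $x$-derivatives in the seminorm do pick up powers $t^\alpha$, but the $\alpha=0$ contributions dominate). So your approach, as written, only proves the estimate without the $t$.

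The paper gets the $t$ by \emph{not} conjugating: it keeps the operator in the form \eqref{e:localized_semiclassical_commutator}, so that after the $y$-integration by parts against the phase $e^{2\pi i(\xi-\eta)y}$ the denominator is $(\xi-\eta)$ in the phase variables while the symbol difference $p_1(x,t\xi)-p_1(x,t\eta)$ lives at the scale $t\xi,t\eta$. Writing $\frac{1}{2\pi i(\xi-\eta)}=\frac{t}{2\pi i\,t(\xi-\eta)}$ extracts the factor $t$ explicitly, and the remaining quotient is controlled by the mean value theorem (yielding $M_{1,0}^{m_1-1}(\partial_\xi p_1)$) together with the restriction $(t\xi,t\eta)\in\Omega_\varphi$ (yielding $\vert I_\varphi\vert^\mu$); the terms where $\partial_\xi,\partial_\eta$ fall on $p_1(x,t\xi)$ or $p_2^\dagger(y,t\eta)$ gain a $t$ by the chain rule. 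To salvage your strategy, either skip the conjugation and import the $t$ in exactly this way, or keep the conjugation but obtain the $t$ by a genuinely different route: in the conjugated picture the gain comes from the $t$-concentration of the $y$-Fourier transform of $p_2^t$ interacting with the $\vert\xi-\eta\vert$ factor from the mean value theorem, which is \emph{not} captured by the Plancherel/Cauchy--Schwartz bookkeeping of Lemma \ref{l:castro}; that bookkeeping is lossy here. A secondary issue: you collect $\vert I_\varphi\vert$ from absorbing $\varphi$ into the $\gamma$-factors and separately $\vert I_\varphi\vert^\mu$ from the weight estimates, overshooting the stated $\vert I_\varphi\vert^\mu$; in the paper $\varphi$ is kept inside $a_1$ (not absorbed into $\gamma_1^\iota$) and contributes only through $\Vert\varphi\Vert_{L^\infty}$ and the support restriction $\Omega_\varphi$.
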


\begin{remark}
The presence of derivatives in $\xi$ in all the terms in the right-hand-side of \eqref{e:castro_estimate} allow us to bring the factor $t$ in the semiclassical estimate \eqref{e:localized_semiclassical_estimate}.
\end{remark}

\begin{proof}
The proof mimics the one of Lemma \ref{l:castro}, but in this case, we replace $p_1(x,\xi) - p_1(x,\eta)$ in \eqref{e:integral_formula_commutator} by
\begin{equation}
\label{e:new_symbol}
a_1(x,\xi,\eta) = \varphi(\xi - \eta) \big( p_1(x,\xi) - p_1(x,\eta) \big) (1 + 2\pi i \eta)^{m_2},
\end{equation}
and  $p_2(y,\eta)$ by $p_2^\dagger(y,\eta) = (1+ 2\pi i \eta)^{-m_2}p_2(y,\eta)$. Using next the identities \eqref{e:equivalencie_1}, \eqref{e:equivalencie_2} and \eqref{e:equivalencie_3}, we integrate by parts in $\eta$, $\xi$ and $y$ successively to get
\begin{align*}
\mathfrak{C}_{t,\varphi}(p_1,p_2)f(x)  & \\[0.2cm]
 & \hspace*{-1cm} = \int_{\R^3} e^{2\pi i \sigma(x,\xi,y,\eta)} \gamma(\xi-\eta) \mathcal{D}_y \big( \gamma (x-y)\Gamma^{+}_{f,\gamma}(y,\eta) \mathcal{D}_\xi  \mathcal{D}_\eta \big( a_1(x,t\xi,t\eta) p_2^\dagger(y,t\eta) \big) \big)  d\eta dy d\xi,
\end{align*}
where $\sigma(x,\xi,y,\eta) = x \xi - \xi y + y \eta$. Expanding the derivatives by the product rule, we reach again to a sum of terms of the form \eqref{e:T_term} after the obvious substitutions. In particular, when no derivatives in $\xi$ nor $\eta$ are performed in $a_1$, a further use of integration by parts in the $y$ variable, as we did to obtain \eqref{e:a_1_dagger}, allow us to replace $a_1^\iota = a_1$ by
\begin{align*}
a_1^{\dagger}(x,t\xi,t\eta) & = \frac{\varphi\big((t(\xi - \eta)\big) \big(p_1(x,t\xi)-p_1(x,t\eta)\big) (1+ 2\pi i t\eta)^{m_2}}{2\pi i(\xi - \eta)} \\[0.2cm]
 & = t\cdot  \frac{\varphi\big((t(\xi - \eta)\big) \big(p_1(x,t\xi)-p_1(x,t\eta)\big) (1+ 2\pi i t\eta)^{m_2}}{2\pi i t(\xi - \eta)}.
\end{align*}
We then estimate each of the terms $J_\iota$ obtained similarly as we did in the proof of Lemma \ref{l:castro}. Here we only remark the main differences and changes required in this case, which appear only when bounding the $L^\infty$ norms of $a_1^\iota$ and $a_2^\iota$. In fact, it is sufficient to indicate how the term $J_\iota$ involving $a_1^\dagger$ and $p_2^\dagger$ is managed, since the others can be bounded in a completely analogous way.

We consider the set
\begin{equation}
\label{e:convex_set}
\Omega_\varphi := \{ (\xi, \eta) \in \R^2 \, : \, \xi - \eta \in I_\varphi \},
\end{equation}
and we use that $m_1 + m_2 -1 \leq 0$ and the mean-value theorem to get
\begin{align*}
 \sup_{(x,\xi,\eta) \in \R^3} \vert \mathcal{D}_x a_1^{\dagger}(x,t\xi,t\eta) \vert &  = t  \cdot \sup_{(x,\xi,\eta) \in \R^3}  \frac{ \vert  \varphi\big( t(\xi - \eta)\big) \mathcal{D}_x \big( p_1(x,t\xi) - p_1(x,t\eta) \big)(1 + 2\pi i t \eta)^{m_2} \vert}{\vert t(\xi - \eta) \vert}\\[0.2cm]
 & = t  \cdot \sup_{(x,\xi,\eta) \in \R^3}  \frac{ \vert  \varphi (\xi - \eta) \mathcal{D}_x \big( p_1(x,\xi) - p_1(x,\eta) \big)(1 + 2\pi i  \eta)^{m_2} \vert}{\vert \xi - \eta \vert}\\[0.2cm]
 & \leq t \, \Vert \varphi \Vert_{L^\infty}  \sup_{x \in \R} \sup_{ (\xi, \eta) \in \Omega_\varphi} \vert \mathcal{D}_x \partial_\xi p_1(x,\xi) \vert \vert 1+ 2 \pi i \eta \vert^{m_2} \\[0.2cm]
 & \leq t \,M_{1,0}^{m_1 - 1}(\partial_\xi p_1) \sup_{(\xi,\eta) \in \Omega_\varphi} (1+ \vert \xi \vert)^{m_1-1} (1+ \vert \eta \vert)^{m_2} \\[0.2cm]
 &  \lesssim t \, \vert I_\varphi \vert^{\mu} M_{1,0}^{m_1 - 1}(\partial_\xi p_1).
\end{align*}
Moreover, for every $0 \leq \alpha \leq 2$,
\begin{equation}
\label{e:sup_for_p_2}
\sup_{(y,\eta) \in \R^2} \vert \partial_y^\alpha p_2^\dagger(y,t\eta) \vert \leq M_{2,0}^{m_2}(p_2).
\end{equation}
These and analogous estimates, depending on whether the derivatives $\partial_\xi$ and $\partial_\eta$ act on the factors $\varphi$, $p_1$ or $p_2$, together with the ones given in the proof of Lemma \ref{l:castro}, suffice to bound all the terms $T_\iota$ .
\end{proof}

We next deal with symbols in the classes $\mathcal{N}^0_{s,1}$. Since we already have $L^2$-decay in the $x$ variable, we do not need to integrate by parts in the momentum variables $\xi,\eta$. This simplifies the proof.
\begin{lemma}
\label{c:commutator_in_sobolev}
Let $p_1 \in \mathcal{N}_{1,1}^0$ and $p_2 \in \mathcal{N}_{2,0}^0$.
Then
$$
\Vert \mathfrak{C}(p_1,p_2) \Vert_{\mathcal{L}(L^2)} \lesssim N_1( \partial_\xi p_1 ) N_2( p_2 ).
$$
\end{lemma}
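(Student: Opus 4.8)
The plan is to follow the same strategy as in the proof of Lemma~\ref{l:castro}, but to exploit the $H^s$-regularity in the $x$ variable of $p_1$ and $p_2$ so that no integration by parts in the momentum variables $\xi$ and $\eta$ is needed. I start from the integral formula \eqref{e:integral_formula_commutator}, writing
$$
\mathfrak{C}(p_1,p_2)f(x) = \int_{\R^3} e^{2\pi i(x\xi - \xi y + y \eta)} \big( p_1(x,\xi) - p_1(x,\eta) \big) p_2(y,\eta) \widehat{f}(\eta) \, d\eta \, dy \, d\xi,
$$
and use the identity \eqref{e:identity_w_r_t_y}, that is $\partial_y e^{2\pi i(\xi-\eta)y} = 2\pi i(\xi-\eta) e^{2\pi i(\xi - \eta)y}$, to integrate by parts once in the $y$ variable. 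This replaces the difference quotient factor by
$$
a_1^\dagger(x,\xi,\eta) = \frac{p_1(x,\xi)-p_1(x,\eta)}{2\pi i(\xi-\eta)},
$$
which by the mean value theorem satisfies $\|a_1^\dagger(\cdot,\xi,\eta)\|_{H^1_x} \leq N_1(\partial_\xi p_1)$, together with producing from the product rule either a $\partial_y p_2$ term or a $\partial_y a_1^\dagger$ term. Then I apply \eqref{e:equivalencie_3}, i.e. the operator $\mathcal{D}_y = 1-\partial_y$ dressed with $\gamma(\xi-\eta)$, and \eqref{e:equivalencie_2}, dressed with $\gamma(x-y)$, to gain the decaying kernels $\gamma(\xi-\eta) \in L^2_{\xi-\eta}$ and $\gamma(x-y)$, but crucially I do \emph{not} integrate by parts in $\eta$; the factor $\widehat{f}(\eta)$ stays as it is and gets absorbed into a $\Gamma^{\pm}$-type object.

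After expanding via the product rule, the commutator is written as a finite sum of terms structurally identical to \eqref{e:T_term},
$$
T_\iota f(x) = \int_{\R^3} e^{2\pi i \sigma(x,\xi,y,\eta)} \gamma_1^\iota(\xi-\eta)\,\gamma_2^\iota(x-y)\, a_1^\iota(x,\xi,\eta)\, a_2^\iota(y,\eta)\, \Gamma^{+}_{f,\gamma_3^\iota}(y,\eta)\, d\eta\, dy\, d\xi,
$$
where now $a_1^\iota$ is one of $a_1^\dagger$ or $\partial_\xi$-derivatives thereof (all controlled in $H^1_x$ by $N_1(\partial_\xi p_1)$, after at most one more integration by parts in $y$ if no $\partial_y$ hit $a_1^\dagger$), and $a_2^\iota$ is $p_2$ or $\partial_y p_2$ (controlled in $H^2_x$ by $N_2(p_2)$). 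I then estimate each $T_\iota$ by the same mechanism as in Lemma~\ref{l:castro}: apply Lemma~\ref{l:take_2d} to reduce $\|T_\iota f\|_{L^2}$ to $\|\mathcal{D}_x G_\iota\|_{L^2(\R^2)}$, Cauchy--Schwarz in $\eta$ to pull out $\|\gamma_1^\iota\|_{L^2}$, and then — this is the point where the argument actually differs from Lemma~\ref{l:castro} — instead of taking an $L^\infty_x$ bound on the $a_1$ factor, I keep $a_1^\iota$ in $L^2_x$, using $\|\gamma_2^\iota\|_{H^1}$ and $\|a_1^\iota\|_{L^2_x L^\infty_{\xi,\eta}}$ (which is finite because $a_1^\iota \in H^1_x \hookrightarrow L^\infty_x$ uniformly in $\xi,\eta$, using $N_1(\partial_\xi p_1)$ with $s=1 > 1/2$), and then Plancherel in $x$, Fubini in $\xi$, and Plancherel again in $y$ exactly as in the displayed computation for $I_\iota$, ending with Lemma~\ref{l:elementary} applied to $\Gamma^{+}_{f,\gamma_3^\iota}$ and the $L^\infty$ bound on $a_2^\iota$ coming from $H^2_x \hookrightarrow L^\infty_x$, i.e. $N_2(p_2)$. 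Collecting all $\iota$ gives $\|\mathfrak{C}(p_1,p_2)f\|_{L^2} \lesssim N_1(\partial_\xi p_1) N_2(p_2) \|f\|_{L^2}$.

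The main obstacle — or rather the main point requiring care — is the bookkeeping of where the $H^s_x$ norms land: I must make sure that after all product-rule expansions, every factor carrying $x$-dependence coming from $p_1$ is paired with a $\gamma_2^\iota$-kernel so that it can be estimated in $L^2_x$ (using the Sobolev embedding $H^1 \hookrightarrow L^\infty$ to gain the needed $L^\infty$ control only when it sits against $\Gamma$), while the factor from $p_2$ must always be handled in $L^\infty_x$, which is exactly why $p_2$ is required to lie in $\mathcal{N}^0_{2,0}$ rather than $\mathcal{N}^0_{1,0}$: the two $y$-derivatives arising from $\mathcal{D}_y$ acting on $p_2(y,\eta)$ must still leave it in $H^1_x \cdot H^1_y$-controllable form, and the Plancherel-in-$y$ step demands one $y$-derivative's worth of regularity on $a_2^\iota$ as in the $I_\iota$ computation. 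Since $p_2 \in \mathcal{N}^0_{2,0}$ has no $\xi$-derivatives available, one must also check that no integration by parts in $\xi$ or $\eta$ is secretly needed — and indeed it is not, precisely because the $L^2_\xi$ decay is supplied by $\gamma_1^\iota(\xi-\eta)$ and the $\eta$-integrability by $\Gamma^{+}_{f,\gamma_3^\iota}$ together with $\widehat{f} \in L^2$, so the argument closes without touching momentum derivatives of $p_2$.
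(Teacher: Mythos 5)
Your high-level intuition is right — the point of this lemma is to exploit the $L^2_x$ decay of $p_1$ so that no integration by parts in the momentum variables is needed — but the concrete plan you describe does \emph{not} carry that intuition out, and as written it has two gaps.

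First, you propose to perform the integration by parts in $\xi$ using \eqref{e:equivalencie_2} (to gain the kernel $\gamma(x-y)$), in addition to the two $y$-integrations. That step is precisely what the paper's proof drops. If you apply $\mathcal{D}_\xi = 1 - \partial_\xi$ after having already converted the difference quotient into
$$
a_1^\dagger(x,\xi,\eta) = \frac{1}{2\pi i}\int_0^1 \partial_\xi p_1\big(x,\eta + s(\xi-\eta)\big)\,ds,
$$
then the $\partial_\xi$ part produces $\partial_\xi^2 p_1$. But the hypothesis is $p_1 \in \mathcal{N}^0_{1,1}$, which gives only \emph{one} $\partial_\xi$ derivative (and this derivative is already consumed by $a_1^\dagger$). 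Your parenthetical assertion that ``$\partial_\xi$-derivatives [of $a_1^\dagger$] are all controlled in $H^1_x$ by $N_1(\partial_\xi p_1)$'' is therefore not true; it would require $N_1(\partial_\xi^2 p_1)$. The $\xi$-integration by parts is not a cosmetic choice — it is exactly the step that must be removed, and the paper can remove it because the $x$-integral no longer needs the $\gamma(x-y)$ kernel: after Cauchy--Schwarz in $\eta$, one integrates in $x$ first and bounds $\int_\R |\mathcal{D}_x a_1^\dagger(x,\xi,\eta)|^2\,dx \leq N_1(\partial_\xi p_1)^2$ uniformly in $(\xi,\eta)$ by Minkowski's integral inequality. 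Then Plancherel in $\xi$ (with dual variable $y$) closes the remaining integral and produces $N_2(p_2)$.

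Second, you say you do not integrate by parts in $\eta$ and yet you carry a factor $\Gamma^{+}_{f,\gamma_3^\iota}(y,\eta)$ through the argument. These are inconsistent: the object $\Gamma^{\pm}_{f,\gamma}$ arises precisely from the $\eta$-integration by parts via \eqref{e:equivalencie_1} combined with the Fourier inversion formula for $\widehat{f}$. In the paper's argument $\widehat{f}(\eta)$ simply remains as a factor — there is no $\Gamma$ object and no $\eta$-integration by parts, and the $L^2$ norm of $f$ comes out directly at the end via the Plancherel step. A related confusion appears in your norm bookkeeping: you invoke $\|a_1^\iota\|_{L^2_x L^\infty_{\xi,\eta}}$, but what $N_1(\partial_\xi p_1)$ actually controls is $\sup_{\xi,\eta}\|a_1^\dagger(\cdot,\xi,\eta)\|_{H^1_x}$, i.e.\ the $L^\infty_{\xi,\eta}L^2_x$ ordering, and this (via Minkowski) is exactly what is used; the $L^2_x L^\infty_{\xi,\eta}$ norm you name is not what is available nor what the estimate needs.

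To repair the proposal: delete the step using \eqref{e:equivalencie_2}, delete all references to $\Gamma^+$, and instead (i) apply \eqref{e:equivalencie_3} once to generate $\gamma(\xi-\eta)$ and $\mathcal{D}_y$ on $p_2$, (ii) integrate by parts once more in $y$ to form $a_1^\dagger$ and an extra $\partial_y$ on $p_2$, (iii) apply Lemma~\ref{l:take_2d}, Cauchy--Schwarz in $\eta$ with the $\gamma(\xi-\eta)$ kernel, Minkowski in $x$ on $\mathcal{D}_x a_1^\dagger$, and finally Plancherel in $\xi$; note that $p_2 \in \mathcal{N}^0_{2,0}$ is exactly enough because $\partial_y\mathcal{D}_y p_2$ involves two $y$-derivatives and is bounded in $L^\infty_\eta L^2_y$ by $N_2(p_2)$.
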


\begin{proof}
We now have
\begin{align*}
\mathfrak{C}(p_1,p_2)f(x) = \int_{\R^3} e^{2\pi i \sigma(x,\xi,y,\eta)} \gamma(\xi-\eta)  \big(p_1(x,\xi) - p_1(x,\eta) \big) \mathcal{D}_y p_2(y,\eta) \widehat{f}(\eta)  d\eta dy d\xi.
\end{align*}
We then integrate by parts one more time in $y$, using the identity
$$
\frac{1}{2\pi i (\xi-\eta)} \partial_y \, e^{2\pi i (\xi-\eta) y}  = e^{2\pi i (\xi-\eta)y},
$$
to obtain
\begin{align*}
\mathfrak{C}(p_1,p_2)f(x) = \int_{\R^3} e^{2\pi i \sigma(x,\xi,y,\eta)} \gamma(\xi-\eta)  \left( \frac{p_1(x,\xi) - p_1(x,\eta)}{2\pi i (\xi - \eta)} \right) \partial_y \mathcal{D}_y p_2(y,\eta) \widehat{f}(\eta)  d\eta dy d\xi.
\end{align*}
Considering
\begin{equation}
\label{e:to_minkowski}
a^\dagger_1(x,\xi,\eta) := \frac{p_1(x,\xi) - p_1(x,\eta)}{2\pi i (\xi - \eta)} = \frac{1}{2\pi i} \int_0^1 \partial_\xi p_1(x,\eta + s(\xi - \eta)) ds,
\end{equation}
and using Lemma \ref{l:take_2d} yields that
$$
\Vert \mathfrak{C}(p_1,p_2)f \Vert_{L^2} \leq \Vert \mathcal{D}_x G \Vert_{L^2(\R_x \times \R_\xi)},
$$
where
$$
G(x,\xi) = \int_{\R^2} e^{2\pi i (-\xi y + y \eta)} \gamma(\xi-\eta) a^\dagger_1(x,\xi,\eta) \partial_y \mathcal{D}_y p_2(y,\eta) \widehat{f}(\eta)  d\eta dy.
$$
The end of the proof follows by similar arguments of those of Lemma \ref{l:castro}. By the Cauchy-Schwartz inequality, we have
\begin{equation}
\vert \mathcal{D}_x G(x,\xi) \vert^2 \leq \Vert \gamma \Vert_{L^2}^2 \int_{\R} \left \vert  \mathcal{D}_x a^\dagger_1(x,\xi,\eta) \int_\R e^{2\pi i( \eta - \xi)y}  \partial_y \mathcal{D}_y p_2(y,\eta) \widehat{f}(\eta) dy \right \vert^2 d\eta.
\end{equation}
Moreover, by the Minkowski integral inquality,
\begin{align*}
\int_\R  \big \vert \mathcal{D}_x a_1^\dagger(x,\xi,\eta) \big \vert^2 dx & \leq \int_\R \left \vert \int_0^1 \mathcal{D}_x \partial_\xi p_1 \big(x, \eta + s (\xi - \eta) \big) ds \right \vert^2 dx \\[0.2cm]
  & \leq \left( \int_0^1 \left( \int_\R \big \vert \mathcal{D}_x \partial_\xi p_1\big(x, \eta + s(\xi - \eta) \big) \big \vert^2 dx \right)^{1/2}ds \right)^2 \\[0.2cm]
 & \leq \sup_\xi \int_\R \vert \mathcal{D}_x \partial_\xi p_1(x,\xi) \vert^2 dx \\[0.2cm]
 & = N_1(\partial_\xi p_1)^2.
\end{align*}
Hence, using this and \eqref{e:to_minkowski}, we obtain
$$
\Vert \mathcal{D}_x G \Vert^2_{L^2(\R^2)}  \lesssim \Vert \gamma \Vert_{L^2}^2 N_1( \partial_\xi p_1)^2 \int_{\R^2} \left \vert \int_\R  e^{2\pi i( \eta - \xi)y}  \partial_y \mathcal{D}_y p_2(y,\eta) \widehat{f}(\eta) dy \right \vert^2  d\xi d\eta.
$$
Finally, using Plancherel in the $\xi$-variable, we conclude that
\begin{align*}
\Vert \mathcal{D}_x G \Vert^2_{L^2(\R^2)} & \lesssim \Vert \gamma \Vert_{L^2}^2 N_1( \partial_\xi p_1)^2 \int_{\R^2} \left \vert   \partial_y \mathcal{D}_y p_2(y,\eta) \widehat{f}(\eta)  \right \vert^2  dy d\eta \\[0.2cm]
 & \lesssim  \Vert \gamma \Vert_{L^2}^2  N_1( \partial_\xi p_1)^2  N_{2}(p_2)^2 \Vert f \Vert_{L^2}^2.
\end{align*}
\end{proof}
The following corollary is a semiclassical and localized version of Lemma \ref{c:commutator_in_sobolev}.
\begin{corol}
\label{c:localized_commutator_in_sobolev}
Let $p_1 \in \mathcal{N}_{1,1}^{m_1}$ and $p_2 \in \mathcal{N}_{2,0}^{m_2}$ with $m_1+ m_2 - 1 \leq 0$. Set $\mu = \max \{m_1,m_2,0 \}$. Then, for every $\varphi \in \mathcal{C}_c^\infty(\R)$,
$$
\Vert \mathfrak{C}_{t,\varphi}(p_1,p_2) \Vert_{\mathcal{L}(L^2)} \lesssim t  \Vert \varphi \Vert_{W^{1,\infty}(\R)}\vert I_\varphi \vert^{\mu} N_{1,0}^{m_1-1}( \partial_\xi p_1 ) N_{2,0}^{m_2}( p_2 ).
$$
\end{corol}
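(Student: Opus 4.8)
The plan is to run the proof of Lemma~\ref{c:commutator_in_sobolev} with the semiclassical rescaling $\xi\mapsto t\xi$, $\eta\mapsto t\eta$ and the cutoff $\varphi(t(\xi-\eta))$ inserted, exactly in the spirit of the passage from Lemma~\ref{l:castro} to Corollary~\ref{c:localized_castro}; as in Lemma~\ref{c:commutator_in_sobolev}, the Sobolev regularity in $x$ means we never integrate by parts in the momentum variables $\xi,\eta$, only twice in $y$. First I would peel the momentum weight off $p_2$: write $p_2(y,t\eta)=(1+2\pi i t\eta)^{m_2}\,p_2^\dagger(y,t\eta)$ with $p_2^\dagger(y,\zeta):=(1+2\pi i\zeta)^{-m_2}p_2(y,\zeta)$, which lies in $\mathcal{N}_{2,0}^0$ and satisfies $N_2(p_2^\dagger)\lesssim N_{2,0}^{m_2}(p_2)$ because $|(1+2\pi i\zeta)^{-m_2}|$ is comparable to $(1+|\zeta|)^{-m_2}$; then I move this weight, together with the cutoff, onto the first factor, so that the role played by $p_1(x,\xi)-p_1(x,\eta)$ in \eqref{e:localized_semiclassical_commutator} is taken by
$$
a_1(x,\xi,\eta):=\varphi\big(t(\xi-\eta)\big)\big(p_1(x,t\xi)-p_1(x,t\eta)\big)(1+2\pi i t\eta)^{m_2}.
$$

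Starting from \eqref{e:localized_semiclassical_commutator}, I would integrate by parts in $y$ as in Lemma~\ref{c:commutator_in_sobolev}: once using \eqref{e:equivalencie_3} to produce $\gamma(\xi-\eta)$ and turn $p_2^\dagger(y,t\eta)$ into $\mathcal{D}_y p_2^\dagger(y,t\eta)$, and once more via $\tfrac{1}{2\pi i(\xi-\eta)}\partial_y e^{2\pi i(\xi-\eta)y}=e^{2\pi i(\xi-\eta)y}$ to replace $a_1$ by
$$
a_1^\dagger(x,\xi,\eta):=\frac{a_1(x,\xi,\eta)}{2\pi i(\xi-\eta)}=t\cdot\frac{\varphi\big(t(\xi-\eta)\big)\big(p_1(x,t\xi)-p_1(x,t\eta)\big)(1+2\pi i t\eta)^{m_2}}{2\pi i\,t(\xi-\eta)}
$$
and to turn $\mathcal{D}_y p_2^\dagger$ into $\partial_y\mathcal{D}_y p_2^\dagger$. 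The explicit factor $t$ here is what produces the $t$ in the estimate, and writing the difference quotient as $\tfrac{1}{2\pi i}\int_0^1\partial_\xi p_1\big(x,t\eta+st(\xi-\eta)\big)\,ds$ is the key identity for estimating $a_1^\dagger$. Applying Lemma~\ref{l:take_2d} then gives $\Vert\mathfrak{C}_{t,\varphi}(p_1,p_2)f\Vert_{L^2}\le\Vert\mathcal{D}_x G\Vert_{L^2(\R_x\times\R_\xi)}$ for the corresponding double integral $G(x,\xi)$ in $(\eta,y)$, and a Cauchy--Schwarz in $\eta$ extracts $\Vert\gamma\Vert_{L^2}$.

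The only genuinely new ingredient is the estimate of the $x$-integral of $|\mathcal{D}_x a_1^\dagger|^2$. By Minkowski's integral inequality applied to the above integral representation of the difference quotient, one obtains
$$
\Big(\int_\R|\mathcal{D}_x a_1^\dagger(x,\xi,\eta)|^2\,dx\Big)^{1/2}\lesssim t\,|\varphi(t(\xi-\eta))|\,(1+|t\eta|)^{m_2}\,N_{1,0}^{m_1-1}(\partial_\xi p_1)\,\sup_{\zeta}(1+|\zeta|)^{m_1-1},
$$
the supremum being over $\zeta$ on the segment between $t\eta$ and $t\xi$. Since $\varphi(t(\xi-\eta))\neq 0$ forces $t(\xi-\eta)\in I_\varphi$, so that $t\eta$, $t\xi$ and all such $\zeta$ lie within distance $|I_\varphi|$ of one another, the hypothesis $m_1+m_2-1\le 0$ lets one redistribute the exponents onto a common base and bound $(1+|t\eta|)^{m_2}\sup_\zeta(1+|\zeta|)^{m_1-1}\lesssim|I_\varphi|^{\mu}$, $\mu=\max\{m_1,m_2,0\}$ --- the same cone computation carried out on the set $\Omega_\varphi$ in the proof of Corollary~\ref{c:localized_castro}. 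Thus $\int_\R|\mathcal{D}_x a_1^\dagger(x,\xi,\eta)|^2\,dx\lesssim t^2|\varphi(t(\xi-\eta))|^2|I_\varphi|^{2\mu}N_{1,0}^{m_1-1}(\partial_\xi p_1)^2$.

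Finally, inserting this into $\Vert\mathcal{D}_x G\Vert_{L^2}^2$, bounding the surviving factor $|\varphi(t(\xi-\eta))|\le\Vert\varphi\Vert_{L^\infty(\R)}\le\Vert\varphi\Vert_{W^{1,\infty}(\R)}$, performing Plancherel in $\xi$ to collapse the oscillatory $y$-integral, using $\int_\R|\partial_y\mathcal{D}_y p_2^\dagger(y,t\eta)|^2\,dy\lesssim\Vert p_2^\dagger(\cdot,t\eta)\Vert_{H^2}^2\lesssim N_{2,0}^{m_2}(p_2)^2$, and then Plancherel in $\eta$, I arrive at
$$
\Vert\mathfrak{C}_{t,\varphi}(p_1,p_2)f\Vert_{L^2}\lesssim t\,\Vert\varphi\Vert_{W^{1,\infty}(\R)}\,|I_\varphi|^{\mu}\,N_{1,0}^{m_1-1}(\partial_\xi p_1)\,N_{2,0}^{m_2}(p_2)\,\Vert f\Vert_{L^2},
$$
which is the assertion. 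I expect the only mild obstacle to be the bookkeeping in that weighted supremum over the cone; everything else is a direct transcription of Lemma~\ref{c:commutator_in_sobolev} combined with the semiclassical modifications already made in the proof of Corollary~\ref{c:localized_castro}.
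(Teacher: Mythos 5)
Your proof is correct and fills in exactly the argument the paper leaves implicit: the paper states Corollary~\ref{c:localized_commutator_in_sobolev} without proof as ``a semiclassical and localized version of Lemma~\ref{c:commutator_in_sobolev},'' and your argument runs the proof of Lemma~\ref{c:commutator_in_sobolev} (two $y$-integrations by parts, no $\xi,\eta$-integrations since the $x$-decay is supplied by the Sobolev regularity, Lemma~\ref{l:take_2d}, Cauchy--Schwarz in $\eta$, Minkowski for the difference quotient, Plancherel in $\xi$ and $\eta$) with the same semiclassical modifications that Corollary~\ref{c:localized_castro} applies to Lemma~\ref{l:castro}: absorb the momentum weight into $p_2^\dagger$ and $a_1$, extract the factor $t$ from $a_1^\dagger$, and use the cone $\Omega_\varphi$ and $m_1+m_2-1\le 0$ to bound the weighted supremum by $|I_\varphi|^\mu$. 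The bookkeeping in the Minkowski step and the final Plancherel reductions are carried out correctly, and your observation that only $\Vert\varphi\Vert_{L^\infty}\le\Vert\varphi\Vert_{W^{1,\infty}}$ is actually used here (since $\partial_\xi,\partial_\eta$ never land on $\varphi$) is accurate.
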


Combining the two previous lemmas, we also have:

\begin{lemma}
\label{c:mixed_commutator}
Let $p_1 \in \mathcal{M}_{1,1}^0$ and $p_2 \in \mathcal{N}_{2,0}^0$.
Then
$$
\Vert \mathfrak{C}(p_1,p_2) \Vert_{\mathcal{L}(L^2)} \lesssim M_1( \partial_\xi p_1 ) N_2( p_2 ).
$$
Similarly, let $p_1 \in \mathcal{N}_{1,1}^0$ and $p_2 \in \mathcal{M}_{2,1}^0$.
Then
$$
\Vert \mathfrak{C}(p_1,p_2) \Vert_{\mathcal{L}(L^2)} \lesssim  N_1( \partial_\xi p_1 ) M_2( p_2 ) + N_1(p_1) M_1 (\partial_\xi p_2).
$$
\end{lemma}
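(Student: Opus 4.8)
\end{lemma}

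\begin{proof}[Sketch of proof]
The plan is to fuse the two schemes already developed: the $L^\infty$-based one of Lemma~\ref{l:castro}, in which the regularizing kernels $\gamma(x-y)$, $\gamma(\xi-\eta)$ and the factor $\Gamma^{+}_{f,\gamma}$ are manufactured by successive integrations by parts in the phase variables $\eta$, $\xi$, $y$ and every symbol factor is controlled in $L^\infty$ by an $M$-seminorm; and the $L^2_x$-based one of Lemma~\ref{c:commutator_in_sobolev}, in which the Sobolev regularity of a symbol in its position variable supplies $L^2$-decay directly, only the integrations by parts in $y$ are needed, and the relevant symbol factor is controlled in $L^2$ by an $N$-seminorm. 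In both regimes one starts from \eqref{e:integral_formula_commutator}, reduces $\mathfrak{C}(p_1,p_2)$ to a finite sum of elementary operators $T_\iota$ of the form \eqref{e:T_term}, invokes Lemma~\ref{l:take_2d} for the outer $\xi$-integral, a Cauchy--Schwarz in $\eta$ against a $\gamma$-kernel, then Plancherel and Fubini in $x$, $\xi$, $y$, and closes with Lemma~\ref{l:elementary} whenever $\widehat f(\eta)$ has been turned into $\Gamma^{+}_{f,\gamma}(y,\eta)$. The only new ingredient is to keep exactly those integrations by parts that the mixed regularity requires.

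For the first estimate, with $p_1\in\mathcal{M}_{1,1}^0$ and $p_2\in\mathcal{N}_{2,0}^0$, I would run the scheme of Lemma~\ref{l:castro} but \emph{without} the integration by parts in $\eta$. Indeed, since $p_2(\cdot,\eta)\in H^2(\R)$ uniformly in $\eta$, there is no need to trade $\widehat f(\eta)$ for $\Gamma^{+}_{f,\gamma}(y,\eta)$: the factor $a_2^\iota$ built from $p_2$ (equal to $p_2$ or a $y$-derivative of it of order at most two) satisfies $\Vert a_2^\iota(\cdot,\eta)\Vert_{L^2(\R_y)}\lesssim N_2(p_2)$ uniformly in $\eta$, and the leftover $\widehat f(\eta)$ is absorbed into $\Vert f\Vert_{L^2}$ after the $\eta$-integration. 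One keeps the integration by parts in $\xi$ — whose kernel $\gamma(x-y)$ is precisely what restores $L^2$-integrability in $x$, now that $p_1\in\mathcal{M}$ only controls its factors $a_1^\dagger$ and $\partial_\xi p_1$, and their $\mathcal{D}_x$-derivatives, in $L^\infty$, namely by $M_1(\partial_\xi p_1)$ via the mean value theorem — and the two integrations by parts in $y$, producing $\gamma(\xi-\eta)$ and, through \eqref{e:a_1_dagger}, converting differences of $p_1$ into $\partial_\xi p_1$. Since no $\partial_\eta$ ever lands on $p_2$, only the term $M_1(\partial_\xi p_1)\,N_2(p_2)$ survives, in agreement with the fact that $p_2\in\mathcal{N}_{2,0}^0$ carries no regularity in $\xi$.

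For the second estimate, with $p_1\in\mathcal{N}_{1,1}^0$ and $p_2\in\mathcal{M}_{2,1}^0$, the roles are reversed: $p_1(\cdot,\xi)\in H^1(\R)$ provides the $L^2_x$-decay directly, so the integration by parts in $\xi$ (the kernel $\gamma(x-y)$) is no longer needed, whereas $p_2\in\mathcal{M}$ has no $L^2_y$-decay, so the Fourier inversion and the integration by parts in $\eta$ of Lemma~\ref{l:castro} — replacing $\widehat f(\eta)$ by $\Gamma^{+}_{f,\gamma}(y,\eta)$ — become mandatory. Thus I would combine the two-fold $y$-integration-by-parts scheme of Lemma~\ref{c:commutator_in_sobolev} with that $\eta$-step. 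The operator $\mathcal{D}_\eta$ coming from the latter, acting on $(p_1(x,\xi)-p_1(x,\eta))\,p_2(y,\eta)$, produces three kinds of terms: the difference of $p_1$ times $p_2$; $\partial_\eta p_1$ times $p_2$; and the difference of $p_1$ times $\partial_\eta p_2$. In the first two one uses the $a_1^\dagger$-trick together with Minkowski's inequality to produce $\partial_\xi p_1$, of $H^1(\R_x)$-norm at most $N_1(\partial_\xi p_1)$, paired with $p_2$ and its $y$-derivatives up to order two, bounded in $L^\infty$ by $M_2(p_2)$; in the third one simply keeps the difference of $p_1$, of $H^1(\R_x)$-norm at most $2N_1(p_1)$, paired with $\partial_\eta p_2$, bounded in $L^\infty$ by $M_1(\partial_\xi p_2)$. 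The factor $\Gamma^{+}_{f,\gamma}$ is absorbed by Lemma~\ref{l:elementary}. This yields exactly $N_1(\partial_\xi p_1)\,M_2(p_2)+N_1(p_1)\,M_1(\partial_\xi p_2)$.

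The main difficulty is exactly the bookkeeping just outlined: the integration by parts in $\xi$ must be retained if and only if $p_1\in\mathcal{M}$ (to recover $L^2$-integrability in $x$ through $\gamma(x-y)$), and the integration by parts in $\eta$, forming $\Gamma^{+}_{f,\gamma}$, must be performed if and only if $p_2\in\mathcal{M}$ (to recover $L^2$-integrability in $(y,\eta)$ through Lemma~\ref{l:elementary}); when the symbol in question belongs to an $\mathcal{N}$-class its Sobolev regularity supplies the missing $L^2$-control, and for $p_2\in\mathcal{N}_{2,0}^0$ the $\eta$-step \emph{must} be omitted for lack of any regularity in $\xi$. Once this is settled, each individual bound is carried out exactly as in the proofs of Lemma~\ref{l:castro} and Lemma~\ref{c:commutator_in_sobolev}, with every symbol factor measured by the $M$- or $N$-seminorm attached to its class.
\end{proof}
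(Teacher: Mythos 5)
Your proof is correct and matches the paper's (implicit) approach exactly: the paper states this lemma without proof, merely announcing it as a combination of Lemma~\ref{l:castro} and Lemma~\ref{c:commutator_in_sobolev}, and your bookkeeping — retain the $\xi$-integration by parts (and hence the $\gamma(x-y)$ kernel) precisely when $p_1\in\mathcal{M}$, retain the $\eta$-integration by parts (and hence the factor $\Gamma^{+}_{f,\gamma}$) precisely when $p_2\in\mathcal{M}$, always perform the $y$-integrations by parts, and then close each elementary term with Lemma~\ref{l:take_2d}, Cauchy--Schwarz, Plancherel/Fubini, and either the mean-value theorem (for $M$-seminorms) or Minkowski (for $N$-seminorms) — is the intended synthesis. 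The only detail worth spelling out in a final write-up is that in the first estimate, after Plancherel, the $L^2_{y,\eta}$ control of $a_2^\iota(y,\eta)\widehat{f}(\eta)$ comes from $\sup_\eta\Vert a_2^\iota(\cdot,\eta)\Vert_{L^2_y}\lesssim N_2(p_2)$ paired with $\Vert\widehat f\Vert_{L^2_\eta}$, which substitutes for the role Lemma~\ref{l:elementary} plays in Lemma~\ref{l:castro}.
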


\begin{corol}
\label{c:localized_mixed_commutator}
Let $p_1 \in \mathcal{M}_{1,1}^{m_1}$ and $p_2 \in \mathcal{N}_{2,0}^{m_2}$ with $m_1+ m_2 - 1 \leq 0$. Set $\mu = \max \{m_1,m_2,0 \}$. Then, for every $\varphi \in \mathcal{C}_c^\infty(\R)$,
$$
\Vert \mathfrak{C}_{t,\varphi}(p_1,p_2) \Vert_{\mathcal{L}(L^2)} \lesssim t \Vert \varphi \Vert_{W^{1,\infty}(\R)} \vert I_\varphi \vert^{\mu} M_{1,0}^{m_1-1}( \partial_\xi p_1 ) N_{2,0}^{m_2}( p_2 ).
$$
Similarly, let $p_1 \in \mathcal{N}_{1,1}^{m_1}$ and $p_2 \in \mathcal{M}_{2,1}^{m_2}$.
Then
$$
\Vert \mathfrak{C}_{t,\varphi}(p_1,p_2) \Vert_{\mathcal{L}(L^2)}  \lesssim t \Vert \varphi \Vert_{W^{1,\infty}(\R)} \vert I_\varphi \vert^{\mu} \big( N_{1,0}^{m_1-1}( \partial_\xi p_1 ) M_{2,0}^{m_2}( p_2 ) + N_{1,0}^{m_1}(p_1) M_{1,0}^{m_2-1} (\partial_\xi p_2) \big).
$$
\end{corol}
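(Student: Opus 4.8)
The plan is to carry out the argument of Lemma~\ref{c:mixed_commutator} on the amplitude of $\mathfrak{C}_{t,\varphi}(p_1,p_2)$ appearing in \eqref{e:localized_semiclassical_commutator}, after the localization-and-weight rearrangement already used in the proof of Corollary~\ref{c:localized_castro}: replace $p_1(x,t\xi)-p_1(x,t\eta)$ by the cut, weighted amplitude $a_1(x,t\xi,t\eta)=\varphi\big(t(\xi-\eta)\big)\big(p_1(x,t\xi)-p_1(x,t\eta)\big)(1+2\pi i t\eta)^{m_2}$ as in \eqref{e:new_symbol}, and $p_2(y,t\eta)$ by $p_2^\dagger(y,t\eta)=(1+2\pi i t\eta)^{-m_2}p_2(y,t\eta)$. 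Exactly as in Corollary~\ref{c:localized_castro}, the factor $t$ will come either from the extra $y$-integration by parts \eqref{e:identity_w_r_t_y}, which converts the difference $p_1(x,t\xi)-p_1(x,t\eta)$ into $t$ times an average of $\partial_\xi p_1$, or from a derivative of $\varphi(t(\xi-\eta))$, or from a $\partial_\eta$ acting on $p_2(y,t\eta)$; and the factor $|I_\varphi|^{\mu}$ will come from estimating the resulting $\eta$-weight on the strip $\Omega_\varphi$ of \eqref{e:convex_set}, using $m_1+m_2-1\le 0$.

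For the first case, $p_1\in\mathcal{M}^{m_1}_{1,1}$ and $p_2\in\mathcal{N}^{m_2}_{2,0}$, I would follow the proof of Lemma~\ref{c:commutator_in_sobolev}: keep $\widehat f(\eta)$ un-inverted, so that in the final estimate the $\eta$-integral is closed by $\|\widehat f\|_{L^2}$, and integrate by parts twice in $y$, the first producing the factor $\gamma(\xi-\eta)$ and the second turning the difference into the quotient $a_1^\dagger$ as in \eqref{e:to_minkowski}, which carries the factor $t$. Since $p_1$ is only $L^\infty$ in $x$, the quotient $a_1^\dagger$ is no longer in $L^2_x$, so --- and this is the one addition compared with Lemma~\ref{c:commutator_in_sobolev} --- I would integrate by parts once more in $\xi$ to manufacture the Schwartz factor $\gamma(x-y)$ as in the proof of Lemma~\ref{l:castro}: the $\xi$-derivative then falls either on $\varphi$ (gaining a factor $t$), or cleanly on $p_1(x,t\xi)$ (producing $t\,\partial_\xi p_1$, of order $m_1-1$), or leaves the difference for the quotient step --- and in no branch is it made to act on $p_2$, so only $\mathcal{N}_{2,0}$-regularity of $p_2$ is needed. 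One then closes with Lemma~\ref{l:take_2d}, Cauchy--Schwarz in $\eta$ against $\gamma(\xi-\eta)\in L^2_\eta$, the $L^\infty_{x,\xi,\eta}$ bound $\|\mathcal{D}_x a_1^\dagger\|_{L^\infty}\lesssim t\,\|\varphi\|_{L^\infty}|I_\varphi|^{\mu}M^{m_1-1}_{1,0}(\partial_\xi p_1)$, and Plancherel in $x$ and in $\xi$ together with $\|\partial_y^\alpha p_2^\dagger(\cdot,t\eta)\|_{L^2_y}\lesssim N^{m_2}_{2,0}(p_2)$ for $\alpha\le 2$; the outcome is the stated estimate.

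For the second case, $p_1\in\mathcal{N}^{m_1}_{1,1}$ and $p_2\in\mathcal{M}^{m_2}_{2,1}$, the roles of $\mathcal{M}$ and $\mathcal{N}$ swap: I would now Fourier-invert $\widehat f$ to form $\Gamma^{+}_{f,\gamma}$ and integrate by parts in $\eta$, $\xi$, $y$ successively, then once more in $y$, as in the proof of Lemma~\ref{l:castro}. In the branches where $\partial_\eta$ does not touch $p_2$, one forms $a_1^\dagger$ and estimates it in $L^2_x$ by Minkowski as in \eqref{e:to_minkowski}, getting $N^{m_1-1}_{1,0}(\partial_\xi p_1)$ paired with the $L^\infty$ bound $M^{m_2}_{2,0}(p_2)$ of the (twice $y$-differentiated) $p_2^\dagger$: this is the first term. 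In the branch where $\partial_\eta$ hits $p_2$, the factor $t$ is already provided by $\partial_\eta p_2(y,t\eta)=t(\partial_\xi p_2)(y,t\eta)$, of order $m_2-1$; there is then no need to form the quotient --- and forming it would cost a third $y$-derivative on $p_2$, which we do not have --- so one keeps the difference $p_1(x,t\xi)-p_1(x,t\eta)$ and bounds it, after the $1-\partial_x$ of Lemma~\ref{l:take_2d}, directly in $L^2_x$ by $2N^{m_1}_{1,0}(p_1)$, the $\eta$-weight being absorbed into $|I_\varphi|^{\mu}$ via $m_1+m_2-1\le 0$: this gives the second term $N^{m_1}_{1,0}(p_1)M^{m_2-1}_{1,0}(\partial_\xi p_2)$.

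The hard part is not a new idea but the bookkeeping: one must run the case distinction on which of $\partial_\xi,\partial_\eta,\partial_y$ falls on which of $\varphi$, $p_1$, $p_2^\dagger$ (and on the various factors $\gamma$), and check in every branch that the product of the $\eta$-weights --- those coming from the order of the possibly differentiated $p_1$, from the order of the possibly differentiated $p_2^\dagger$, and from the factor $(1+2\pi i t\eta)^{m_2}$ reintroduced by hand --- is dominated on $\Omega_\varphi$ by $|I_\varphi|^{\mu}$ with $\mu=\max\{m_1,m_2,0\}$; this is exactly where $m_1+m_2-1\le 0$ is used. The only slightly delicate conceptual point is that the two techniques must interlock cleanly: the $\mathcal{N}$-factor is never differentiated in the momentum variables and is kept in $L^2$ in its position variable (its position-integral closed by Minkowski), whereas the $\mathcal{M}$-factor is estimated in $L^\infty$ in its position variable and therefore forces an integration by parts in the conjugate momentum variable to manufacture the Schwartz factor $\gamma(x-y)$ (its position-integral then closed by Plancherel). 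Once this division of labour is fixed, every individual estimate is a routine repetition of the computations in the proofs of Lemmas~\ref{l:elementary}, \ref{l:take_2d}, \ref{l:castro} and \ref{c:commutator_in_sobolev} and of Corollary~\ref{c:localized_castro}.
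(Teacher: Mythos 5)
The paper gives no explicit proof of this corollary, nor of the underlying non-localized Lemma~\ref{c:mixed_commutator}; both are stated as direct consequences of the methods of Lemmas~\ref{l:castro} and~\ref{c:commutator_in_sobolev} together with the localization and rescaling scheme of Corollary~\ref{c:localized_castro}. Your reconstruction is sound and matches the intended assembly: the $\mathcal{N}$-factor is kept in $L^2$ of its position variable and closed by Minkowski as in \eqref{e:to_minkowski}; the $\mathcal{M}$-factor is taken in $L^\infty$ and forces an integration by parts in the momentum variable conjugate to its position variable to manufacture the needed $\gamma$-decay ($\gamma(x-y)$ in the first case, $\Gamma^+_{f,\gamma}$ via the $\eta$-IBP in the second); the factor $t$ is extracted either from the quotient $a_1^\dagger$ or from $\partial_\eta$ landing on the $p_2$-factor; and $m_1+m_2-1\le 0$ is used to absorb the $\eta$-weights on $\Omega_\varphi$ into $\vert I_\varphi\vert^\mu$.

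One small inconsistency is worth flagging in the second case. You write ``integrate by parts in $\eta$, $\xi$, $y$ successively, $\ldots$, as in the proof of Lemma~\ref{l:castro}'', yet your own concluding remark correctly identifies that only the $\mathcal{M}$-factor forces an IBP in its conjugate momentum: here that is the $\eta$-IBP (for $p_2\in\mathcal{M}$ in $y$), while the $\xi$-IBP should be dropped because $p_1\in\mathcal{N}$ in $x$ already supplies the $L^2_x$ decay needed by Lemma~\ref{l:take_2d}. Importing the full $\eta,\xi,y$ scheme of Lemma~\ref{l:castro} would generate the cross term $N^{m_1-1}_{1,0}(\partial_\xi p_1)\,M^{m_2-1}_{1,0}(\partial_\xi p_2)$ (the third term of the $\mathfrak{M}$ in Corollary~\ref{c:localized_castro}), which is absent from the two-term bound stated here; omitting the $\xi$-IBP, as your later conceptual summary in fact prescribes, yields exactly the two stated terms.
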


We next improve the previous lemmas when the supports of $p_1$ and $p_2$ in the $\xi$ variable are disjoint. To this aim, let us define, for any $p(x,\xi)$,
$$
\supp_\xi p := \bigcup_{x \in \R} \supp p(x,\cdot).
$$
\begin{lemma}
\label{l:disjoint_supports} Let $p_1 \in \mathcal{M}_{1,1}^0$ and $p_2 \in \mathcal{M}_{N,1}^0$ with $N \geq 2$. Assume that
$$
d := \operatorname{dist} \big( \supp_\xi p_1 , \; \supp_\xi p_2 \big) > 0.
$$
Then
$$
\Vert \mathfrak{C}(p_1,p_2) \Vert_{\mathcal{L}(L^2)} \lesssim d^{-(N-3/2)}  \big(M_1( \partial_\xi p_1) M_{N}(p_2) + M_1(p_1)  M_{N-1}( \partial_\xi p_2 )  + M_1( \partial_\xi p_1) M_{N-1}( \partial_\xi p_2) \big).
$$
\end{lemma}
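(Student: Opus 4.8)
The plan is to adapt the integration-by-parts scheme of Lemma~\ref{l:castro}, exploiting two consequences of the disjointness of the $\xi$-supports. First, since $p_1(x,\eta)\ne 0$ forces $\eta\in\supp_\xi p_1$ while $p_2(y,\eta)\ne 0$ forces $\eta\in\supp_\xi p_2$, one has $p_1(x,\eta)\,p_2(y,\eta)\equiv 0$; hence $p_1p_2(x,D)=0$ and the integral representation \eqref{e:integral_formula_commutator} collapses to
$$
\mathfrak{C}(p_1,p_2)f(x)=\int_{\R^3}e^{2\pi i(x\xi-\xi y+y\eta)}\,p_1(x,\xi)\,p_2(y,\eta)\,\widehat f(\eta)\,d\eta\,dy\,d\xi .
$$
Second, on the support of the integrand $\xi\in\supp_\xi p_1$ and $\eta\in\supp_\xi p_2$, so $|\xi-\eta|\ge d>0$ throughout; in particular $\xi\ne\eta$, and the sharp integration by parts \eqref{e:identity_w_r_t_y} in $y$ — used only once in Lemma~\ref{l:castro}, to produce $a_1^\dagger$ — may now be iterated.

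Concretely, after Fourier inversion of $\widehat f(\eta)$ I would integrate by parts once in $\eta$ via \eqref{e:equivalencie_1} (giving the factor $\gamma(y-z)$, absorbed into $\Gamma^+_{f,\gamma}$, and the operator $\mathcal{D}_\eta$ acting on $p_2$), once in $\xi$ via \eqref{e:equivalencie_2} (giving $\gamma(x-y)$ and $\mathcal{D}_\xi$), and then $N-1$ times in $y$ via \eqref{e:identity_w_r_t_y} — legitimate since $|\xi-\eta|\ge d$ — which produces the scalar factor $\bigl(2\pi i(\xi-\eta)\bigr)^{-(N-1)}$ (independent of $x$ and $y$) together with $N-1$ derivatives $\partial_y$ distributed by the product rule among $p_2(y,\eta)$, $\gamma(x-y)$ and $\Gamma^+_{f,\gamma}(y,\eta)$. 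It is essential to perform the $\eta$- and $\xi$-integrations \emph{before} the $N-1$ $y$-integrations, so that no further $\partial_\xi$ or $\partial_\eta$ ever falls on $(\xi-\eta)^{-(N-1)}$: every resulting term must carry exactly that factor, no higher power (which would worsen the bound for $d\le 1$) and no lower one (which would fail for $d\ge 1$). Note that this $N-1$-fold $y$-integration replaces, rather than supplements, the single regularised one \eqref{e:equivalencie_3}; and the hypothesis $N\ge2$ keeps $p_2$ (which receives at most $N-1$ derivatives $\partial_y$ and at most one $\partial_\eta$) inside $\mathcal{M}^0_{N,1}$, while $p_1$ (at most one $\partial_x$ from Lemma~\ref{l:take_2d} and one $\partial_\xi$) stays inside $\mathcal{M}^0_{1,1}$.

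Expanding the product rules gives a finite sum of terms of the form \eqref{e:T_term} carrying the extra factor $(2\pi i(\xi-\eta))^{-(N-1)}$ supported in $\{|\xi-\eta|\ge d\}$. Each is estimated as in Lemma~\ref{l:castro}: apply Lemma~\ref{l:take_2d}, then Cauchy--Schwarz in $\eta$ (as in \eqref{e:after_cauchy_schwartz}), then Plancherel in $x$, Fubini in $\xi$ and Plancherel in $y$. The only new point is that the factor extracted by Cauchy--Schwarz in $\eta$ is now
$$
\Bigl\Vert(2\pi i\,\cdot)^{-(N-1)}\,\car{|\cdot|\ge d}\Bigr\Vert_{L^2(\R)}=\Bigl(\int_{|w|\ge d}\frac{dw}{|2\pi w|^{2(N-1)}}\Bigr)^{1/2}=\Bigl(\tfrac{2}{2N-3}\Bigr)^{1/2}(2\pi)^{-(N-1)}\,d^{-(N-3/2)},
$$
the integral converging precisely because $N\ge2$. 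All the $\gamma$-factors and their derivatives $\gamma^{(j)}$ (still in $H^1$) and $\Gamma^+_{f,\gamma}$ contribute, via Lemma~\ref{l:elementary}, only constants times $\Vert f\Vert_{L^2}$, while the $L^\infty$-bounds on the surviving symbol factors give, according to whether $\partial_\xi$ lands on $p_1$ or on $\gamma(x-y)$ and whether $\partial_\eta$ lands on $p_2$, the three products $M_1(\partial_\xi p_1)M_N(p_2)$, $M_1(p_1)M_{N-1}(\partial_\xi p_2)$ and $M_1(\partial_\xi p_1)M_{N-1}(\partial_\xi p_2)$ (using $M_{N-1}(p_2)\le M_N(p_2)$ when the $\eta$-derivative does not hit $p_2$). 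Summing over the finitely many terms yields the stated estimate.

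The main obstacle is exactly the bookkeeping of this expansion: verifying the derivative budget in each term and, above all, that the companion factor is $(\xi-\eta)^{-(N-1)}$ in every term — this forces the particular ordering and the replacement described above, and it is the place where the exponent $N-3/2$ (and the restriction $N\ge2$) is pinned down. As in Lemma~\ref{l:castro}, the oscillatory integrals are only conditionally convergent and the integrations by parts should be justified by inserting cutoffs and passing to the limit; I would carry this out exactly as there and only allude to it.
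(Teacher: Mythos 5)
Your strategy — keeping the $\eta$- and $\xi$-regularized integrations by parts, replacing \eqref{e:equivalencie_3} by iterated sharp $y$-integrations via \eqref{e:identity_w_r_t_y}, and reading the power of $d$ off the Cauchy--Schwarz factor on $\{|\xi-\eta|\ge d\}$ — is the right mechanism, and your way of extracting $d^{-(N-3/2)}$ (entirely from $\Vert(\xi-\eta)^{-(N-1)}\mathbbm{1}_{|\cdot|\ge d}\Vert_{L^2}$, with no surviving $\gamma(\xi-\eta)$) is a legitimate and arguably cleaner variant of the paper's, which instead keeps $\gamma_1^\iota=\gamma$, takes $\Vert\gamma\Vert_{L^2(\mho_d)}=O(d^{-1/2})$ for the $L^2$ half, and bounds $(\xi-\eta)^{-(N-2)}$ in $L^\infty$ by $d^{-(N-2)}$ for the rest.

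However there is a genuine gap in the symbol bookkeeping, and it is traceable to the first simplification you make. Collapsing $p_1p_2\equiv 0$ and writing the integrand as $p_1(x,\xi)\,p_2(y,\eta)$ forfeits the factor $p_1(x,\xi)-p_1(x,\eta)$ on which the paper applies the mean-value device $a_1^\dagger$ of \eqref{e:a_1_dagger}. After expanding $\mathcal{D}_\xi\mathcal{D}_\eta\big(p_1(x,\xi)p_2(y,\eta)\big)=(1-\partial_\xi)p_1(x,\xi)\cdot(1-\partial_\eta)p_2(y,\eta)$ there are \emph{four} cases, not three; the one you do not list is the ``identity--identity'' case giving the product $M_1(p_1)\,M_{N-1}(p_2)$, which is not dominated by any of $M_1(\partial_\xi p_1)M_N(p_2)$, $M_1(p_1)M_{N-1}(\partial_\xi p_2)$, $M_1(\partial_\xi p_1)M_{N-1}(\partial_\xi p_2)$. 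So as written your proof only establishes a strictly weaker bound. This is not a cosmetic defect: the whole point of the three-term structure is that each summand carries a $\partial_\xi$ on $p_1$ or $p_2$, and it is exactly this derivative that is traded for the factor $t$ in the semiclassical Corollary~\ref{c:localized_disjoint_supports}.

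The fix is what the paper does: keep $(p_1(x,\xi)-p_1(x,\eta))\,p_2(y,\eta)$ (which equals $p_1(x,\xi)p_2(y,\eta)$ on the relevant support, so nothing is lost), and, in the one surviving term where neither $\partial_\xi$ nor $\partial_\eta$ has landed on $p_1$, perform one additional sharp $y$-integration and invoke the mean-value theorem to turn $\dfrac{p_1(x,\xi)-p_1(x,\eta)}{2\pi i(\xi-\eta)}$ into $M_1(\partial_\xi p_1)$. In your scheme this means the number of sharp $y$-integrations by parts must be case-dependent — $N$ in that single case (so that after absorbing one power into $a_1^\dagger$ you still have $(\xi-\eta)^{-(N-1)}$ to put into the $L^2$ norm) and $N-1$ in the others — rather than uniformly $N-1$. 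With that modification, your derivation does close and reproduces the stated inequality; as presented, the derivative budget in the ``identity--identity'' case is not met.

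Two small side remarks. The phrase ``whether $\partial_\xi$ lands on $p_1$ or on $\gamma(x-y)$'' should presumably refer to $\partial_x$ from the $\mathcal{D}_x$ of Lemma~\ref{l:take_2d}: $\gamma(x-y)$ does not depend on $\xi$, so $\partial_\xi$ can only hit $p_1(x,\xi)$; the real dichotomy is whether $\mathcal{D}_\xi=1-\partial_\xi$ acts as the identity or as $-\partial_\xi$. Also, your count ``$p_2$ receives at most $N-1$ $\partial_y$'' is indeed sufficient for $M_{N-1}(p_2)\le M_N(p_2)$; under the fix above that count becomes $N$ in the mean-value case, still covered by $p_2\in\mathcal{M}^0_{N,1}$.
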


\begin{proof}
The proof follows the same lines of the one of Lemma \ref{l:castro}. As in that case, we write again the expression
\begin{align*}
\mathfrak{C}(p_1,p_2)f(x) & \\[0.2cm]
 & \hspace*{-1.9cm} = \int_{\R^3} e^{2\pi i \sigma(x,\xi,y,\eta)} \gamma(\xi-\eta) \mathcal{D}_y \big( \gamma (x-y) \Gamma^{+}_{f,\gamma}(y,\eta) \mathcal{D}_\xi  \mathcal{D}_\eta \big( (p_1(x,\xi) - p_1(x,\eta))  p_2(y,\eta) \big) \big)  d\eta dy d\xi,
\end{align*}
where $\sigma(x,\xi,y,\eta) = x\xi - \xi y + y \eta$, and the differential operators $\mathcal{D}_w$ act on all the functions on its right.
We next do $(N-1)$-integrations by parts in the $y$-variable, using the identity
$$
\frac{1}{2\pi i (\xi-\eta)} \partial_y \, e^{2\pi i (\xi-\eta) y}  = e^{2\pi i (\xi-\eta)y},
$$
to bring a factor $(2\pi i(\xi - \eta))^{-(N-1)}$. Using the definition of $d > 0$, we observe that
\begin{equation}
\label{e:more_decayment}
\left \vert \frac{1}{(2\pi i (\xi - \eta))} \right \vert \leq \frac{1}{d}.
\end{equation}
Observe also that in this case, the use of Cauchy-Schwartz as before \eqref{e:after_cauchy_schwartz} allows us to obtain
\begin{align*}
\vert \mathcal{D}_x G_\iota(x,\xi) \vert^2 & \\[0.2cm]
 & \hspace*{-1.5cm} = \left \vert \int_{\vert \eta - \xi \vert \geq d} \int_\R e^{2\pi i( \eta - \xi)y} \gamma^\iota_1(\xi-\eta) \gamma^\iota_2(x-y)  \frac{ a^\iota_1(x,\xi,\eta)}{(2\pi i(\xi - \eta))^{N-2}} a_2^\iota(y,\eta) \Gamma^{+}_{f,\gamma^\iota_3}(y,\eta) dy d\eta \right \vert^2 \\[0.2cm]
& \hspace*{-1.5cm} \leq \Vert \gamma^\iota_1 \Vert_{L^2(\mho_d)}^2 \int_{\vert \eta - \xi \vert \geq d} \left \vert \int_\R e^{2\pi i( \eta - \xi)y}  \mathcal{D}_x \gamma^\iota_2(x-y)  \frac{ a^\iota_1(x,\xi,\eta)}{(2\pi i(\xi - \eta))^{N-2}} a_2^\iota(y,\eta) \Gamma^{+}_{f,\gamma^\iota_3}(y,\eta) dy \right \vert^2 d\eta \\[0.2cm]
& \hspace*{-1.5cm} \leq \Vert \gamma^\iota_1 \Vert_{L^2(\mho_d)}^2 \sup_{\vert \xi - \eta \vert \geq d} \left \Vert \frac{\mathcal{D}_x a^\iota_1(\cdot,\xi,\eta)}{(2\pi i(\xi - \eta))^{N-2}} \right \Vert_{L^\infty(\R_x)} J_\iota(x,\xi),
\end{align*}
where $\gamma^\iota_1 = \gamma$ is given by \eqref{e:special_function}, $\mho_d(\eta) = \{ \eta \in \R \, : \, \vert \eta \vert \geq d \}$, and
$$
J_\iota(x,\xi) = \int_{\R} \left \vert \int_\R e^{2\pi i( \eta - \xi)y}  \mathcal{D}_x \gamma^\iota_2(x-y)  a^\iota_2(y,\eta) \Gamma^{+}_{f,\gamma^\iota_3}(y,\eta) dy \right \vert^2 d\eta.
$$
Moreover, by \eqref{e:more_decayment},
$$
\sup_{\vert \xi - \eta \vert \geq d} \left \Vert \frac{\mathcal{D}_x a^\iota_1(\cdot,\xi,\eta)}{(2\pi i(\xi - \eta))^{N-2}} \right \Vert_{L^\infty(\R_x)} \leq d^{-(N-2)} \Vert \mathcal{D}_x a_1 \Vert_{L^\infty(\R^3)} \leq d^{-(N-2)}M_1(\partial_\xi p_1),
$$
and
\begin{equation}
\label{e:extra_decayment}
\Vert \gamma_1 \Vert_{L^2 (\mho_d)} = \left( \int_{\vert \eta \vert \geq d} \frac{d\eta}{1 + \eta^2}  \right)^{\frac{1}{2}} = \big(\pi - 2 \operatorname{arctan} (d) \big)^{\frac{1}{2}} = O\left( d^{-\frac{1}{2}}  \right), \quad \text{as } d \to \infty.
\end{equation}
The rest of the proof mimics the proof of Lemma \ref{l:castro}.
\end{proof}

From this, we obtain the following corollary which is a semiclassical and localized version of Lemma \ref{l:disjoint_supports}:

\begin{corol}
\label{c:localized_disjoint_supports}
Let $N \geq 2$. Let $p_1 \in \mathcal{M}_{1,1}^{m_1}$ and $p_2 \in \mathcal{M}_{N,1}^{m_2}$ with $m_1+m_2 -1 \leq 0$. Set $\mu = \max \{m_1,m_2,0 \}$. Let $\varphi \in \mathcal{C}_c^\infty(\R)$, assume that
$$
d := \operatorname{dist} \big( 0, \operatorname{supp} \varphi \big) > 0.
$$
Then
\begin{align*}
\Vert \mathfrak{C}_{t,\varphi}(p_1,p_2) \Vert_{\mathcal{L}(L^2)} \lesssim t^{N-1}  \, d^{-(N-\frac{3}{2})} \Vert \varphi \Vert_{W^{1,\infty}(\R)} \vert I_\varphi \vert^{\mu} \, \mathfrak{M}_N(p_1,p_2),
\end{align*}
where
$$
  \mathfrak{M}_N(p_1,p_2) = M_{1,0}^{m_1-1}( \partial_\xi p_1) M^{m_2}_{N,0}(p_2) + M_{1,0}^{m_1}(p_1)  M_{N-1,0}^{m_2-2}( \partial_\xi p_2 )  + M_{1,0}^{m_1-1}( \partial_\xi p_1) M_{N-1,0}^{m_2-1}( \partial_\xi p_2).
$$
\end{corol}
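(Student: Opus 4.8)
The statement to prove is Corollary \ref{c:localized_disjoint_supports}, which is the semiclassical and localized analogue of Lemma \ref{l:disjoint_supports}, just as Corollary \ref{c:localized_castro} was the semiclassical localized version of Lemma \ref{l:castro}. The plan is therefore to combine the two mechanisms already developed: the localization/semiclassicalization trick from the proof of Corollary \ref{c:localized_castro}, and the extra decay gained from the positive distance $d$ from the proof of Lemma \ref{l:disjoint_supports}. First I would rewrite $\mathfrak{C}_{t,\varphi}(p_1,p_2)f$ using the integral formula \eqref{e:localized_semiclassical_commutator}, and, exactly as in the proof of Corollary \ref{c:localized_castro}, replace $p_1(x,t\xi)-p_1(x,t\eta)$ by $a_1(x,t\xi,t\eta) = \varphi(t(\xi-\eta))\big(p_1(x,t\xi)-p_1(x,t\eta)\big)(1+2\pi i t\eta)^{m_2}$ and $p_2(y,t\eta)$ by $p_2^\dagger(y,t\eta)=(1+2\pi i t\eta)^{-m_2}p_2(y,t\eta)$, so that the net power of $(1+|\eta|)$ in the new $p_2^\dagger$ is zero.

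**Key steps.** Then I would integrate by parts in $\eta$, $\xi$, $y$ successively via \eqref{e:equivalencie_1}--\eqref{e:equivalencie_3}, producing a sum of terms $T_\iota$ of the form \eqref{e:T_term}, and, crucially, perform $(N-1)$ further integrations by parts in the $y$-variable using the identity $\frac{1}{2\pi i(\xi-\eta)}\partial_y e^{2\pi i(\xi-\eta)y}=e^{2\pi i(\xi-\eta)y}$, which is legitimate here since $\operatorname{supp}\varphi$ is bounded away from $0$, so $|\xi-\eta|\geq d/t$ (after the substitution reverting to unscaled momenta, $|\xi-\eta|\geq d$) on the support of $\varphi(t(\xi-\eta))$. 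Each such integration by parts contributes a factor $(2\pi i(\xi-\eta))^{-1}$, bounded by $d^{-1}$ for those $N-2$ of them that are just used for decay, and one of them is used, as in \eqref{e:a_1_dagger}, to convert the difference quotient of $p_1$ into $\partial_\xi p_1$. Writing everything back in semiclassical variables, as in the computation of $\sup|\mathcal{D}_x a_1^\dagger|$ in the proof of Corollary \ref{c:localized_castro}, each $\partial_\xi$ brought down on $p_1$ or $p_2$ that acts through the chain rule on the $t\xi$ (resp. $t\eta$) argument carries an explicit factor $t$; combined with the $(N-1)$ extra integrations by parts in $y$ each carrying a $t^{-1}(2\pi i(t\xi-t\eta))^{-1}$ that I rescale to $d^{-1}$, I expect the total power of $t$ to be $t^{N-1}$ (one factor $t$ from the $\partial_\xi$ hitting $p_1$ as in Corollary \ref{c:localized_castro}, times the extra $(N-2)$ factors of $t$ from the surplus integrations by parts), and the total power of $d$ to be $d^{-(N-2)}$ from \eqref{e:more_decayment}, improved to $d^{-(N-3/2)}$ by the $O(d^{-1/2})$ gain from $\Vert\gamma_1\Vert_{L^2(\mho_d)}$ in \eqref{e:extra_decayment} exactly as in Lemma \ref{l:disjoint_supports}. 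The factor $\vert I_\varphi\vert^\mu$ appears in controlling $\sup_{(\xi,\eta)\in\Omega_\varphi}(1+|\xi|)^{m_1-1}(1+|\eta|)^{m_2}$ and its analogues, precisely as in Corollary \ref{c:localized_castro}, since $m_1+m_2-1\leq0$ forces the worst-case growth to be absorbed using that $\xi-\eta\in I_\varphi$. Collecting the resulting bounds on the $L^\infty$-norms of the $a_1^\iota,a_2^\iota$ — which for the three representative distributions of the $\partial_\xi,\partial_\eta$ derivatives give exactly the three terms of $\mathfrak{M}_N(p_1,p_2)$ — and inserting them into the Lemma \ref{l:take_2d} / Cauchy--Schwartz / Plancherel chain from Lemma \ref{l:castro} yields the claimed estimate.

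**Main obstacle.** The routine parts are the Plancherel/Fubini manipulations, which are word-for-word those of Lemmas \ref{l:castro} and \ref{l:disjoint_supports}. The part requiring genuine care is the bookkeeping of the powers of $t$ and $d$ through the combination of \emph{two} different sets of integrations by parts in $y$: the three ``structural'' ones from \eqref{e:equivalencie_1}--\eqref{e:equivalencie_3} plus the $(N-1)$ ``decay'' ones, while at the same time tracking which $\partial_\xi$ or $\partial_\eta$ has landed on $\varphi$, on $p_1$, or on $p_2$, since a $\partial_\xi$ on $\varphi(t(\xi-\eta))$ produces a $t\varphi'$ which is fine, whereas one on $p_i(x,t\cdot)$ produces $t\,\partial_\xi p_i$ but shifts the decay order by $1$ — this is the source of the order shifts $m_2-2$ and $m_2-1$ appearing in the definition of $\mathfrak{M}_N(p_1,p_2)$ (note the extra $-1$ compared to $\mathfrak{M}(p_1,p_2)$ in Corollary \ref{c:localized_castro}, which reflects that one additional $y$-integration by parts has been spent on $p_2$ through the difference $p_2(y,\eta)$ when no $\partial_\xi$ acted on $p_1$). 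Since the result is purely a quantitative refinement of two proofs already carried out in detail, I would present it by indicating only these differences rather than reproducing the whole computation, exactly in the style of the proofs of Corollaries \ref{c:localized_castro} and \ref{c:localized_disjoint_supports}'s predecessors.
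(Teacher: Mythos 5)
Your proposal is correct and follows essentially the same route as the paper: the same replacement of $p_1(x,t\xi)-p_1(x,t\eta)$ and $p_2$ by $a_1$ and $p_2^\dagger$, the same $(N-1)$ extra $y$-integrations by parts yielding $t^{N-1}d^{-(N-2)}$, the same use of $\vert I_\varphi\vert^\mu$ to absorb the momentum powers, and the extra $d^{-1/2}$ from $\Vert\gamma_1\Vert_{L^2(\mho_d)}$. One small slip in your bookkeeping sentence: each extra integration by parts gives $(2\pi i(\xi-\eta))^{-1}=t\,(2\pi i\,t(\xi-\eta))^{-1}$, hence a factor $t$ (not $t^{-1}$), which is what makes your final count $t^{N-1}$ correct.
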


\begin{proof}
The proof mimics the one of Lemma \ref{l:disjoint_supports}, but with some changes analogous to those referred in the proof of Corollary \ref{c:localized_castro} with respect to the terms $J_\iota$. To highlight the required changes, we consider again $a_1$ given by \eqref{e:new_symbol} and use identity \eqref{e:identity_w_r_t_y} and integration by parts with respect to the the $y$ variable to replace $a_1=a_1^\iota$ by
$$
a_1^{\dagger}(x,t\xi,t\eta)  = t \cdot \frac{\varphi\big((t(\xi - \eta)\big) \big(p_1(x,t\xi)-p_1(x,t\eta)\big) (1+ 2\pi i t\eta)^{m_2}}{2\pi i t(\xi - \eta)}.
$$
To bound the term $J_\iota$ involving this symbol, we observe that
\begin{align*}
 \sup_{(x,\xi,\eta) \in \R^3} \left \vert \frac{\mathcal{D}_x a_1^{\dagger}(x,t\xi,t\eta)}{(2\pi i (\xi-\eta))^{N-2}} \right \vert &  \lesssim t^{N-1}  \sup_{(x,\xi,\eta) \in \R^3}  \frac{ \vert  \varphi\big( t(\xi - \eta)\big) \mathcal{D}_x \big( p_1(x,t\xi) - p_1(x,t\eta) \big)(1 + 2\pi i t \eta)^{m_2} \vert}{\vert t(\xi - \eta) \vert^{N-1}}\\[0.2cm]
 & \lesssim \frac{t^{N-1}}{d^{N-2}}  \sup_{(x,\xi,\eta) \in \R^3}  \frac{ \vert  \varphi (\xi - \eta) \mathcal{D}_x \big( p_1(x,\xi) - p_1(x,\eta) \big)(1 + 2\pi i  \eta)^{m_2} \vert}{\vert \xi - \eta \vert}\\[0.2cm]
 & \leq t^{N-1} d^{-(N-2)} \, \Vert \varphi \Vert_{L^\infty}  \sup_{x \in \R} \sup_{ (\xi, \eta) \in \Omega_\varphi} \vert \mathcal{D}_x \partial_\xi p_1(x,\xi) \vert \vert 1+ 2 \pi i \eta \vert^{m_2} \\[0.2cm]
 & \leq t^{N-1} d^{-(N-2)} \,M_{1,0}^{m_1 - 1}(\partial_\xi p_1) \sup_{(\xi,\eta) \in \Omega_\varphi} (1+ \vert \xi \vert)^{m_1-1} (1+ \vert \eta \vert)^{m_2} \\[0.2cm]
 &  \lesssim t^{N-1} d^{-(N-2)} \, \vert I_\varphi \vert^{\mu} M_{1,0}^{m_1 - 1}(\partial_\xi p_1).
\end{align*}
Taking into account  \eqref{e:sup_for_p_2},  \eqref{e:extra_decayment}, and the rest of estimates of the proof of Lemma \ref{l:castro}, we can manage all the terms $T_\iota$, and the result follows.
\end{proof}

The following lemma extends the previous one allowing fractional derivatives of $p_2$ in the $x$ variable.
\begin{lemma}
\label{c:disjoint_in_sobolev}
 Let $p_1 \in \mathcal{N}_{1,1}^0$ and $p_2 \in \mathcal{N}_{s,0}^0$ with $s > 3/2$. Assume that
$$
d = \operatorname{dist} \big( \supp_\xi p_1 , \; \supp_\xi p_2 \big) > 0.
$$
Then
$$
\Vert \mathfrak{C}(p_1,p_2) \Vert_{\mathcal{L}(L^2)} \lesssim  d^{-(s - 3/2)} N_1( \partial_\xi p_1) N_{s}(p_2) .
$$
\end{lemma}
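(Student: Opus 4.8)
The plan is to run the scheme of the proof of Lemma \ref{c:commutator_in_sobolev}, but to use the gap $d$ between the $\xi$-supports in order to squeeze the extra factor $d^{-(s-3/2)}$ out of the $H^s$-regularity of $p_2$ in its first variable; the threshold $s>3/2$ will surface precisely as the $L^2$-integrability threshold of the kernel that carries this decay. First I would start from the integral formula \eqref{e:integral_formula_commutator} and carry out the $y$-integration (legitimate for $f\in\mathscr{S}$ since $p_2(\cdot,\eta)\in H^s(\R)\subset L^2(\R)$, and then extended by density), which yields
$$
\mathfrak{C}(p_1,p_2)f(x) = \int_{\R^2} e^{2\pi i x\xi}\big(p_1(x,\xi)-p_1(x,\eta)\big)\,\widehat{p_2}(\xi-\eta,\eta)\,\widehat{f}(\eta)\,d\eta\,d\xi =: \int_\R e^{2\pi i x\xi} Q(x,\xi)\,d\xi,
$$
where $\widehat{p_2}(\cdot,\eta)$ is the Fourier transform of $p_2(\cdot,\eta)$. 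Writing $p_1(x,\xi)-p_1(x,\eta)=2\pi i(\xi-\eta)a_1^\dagger(x,\xi,\eta)$ with $a_1^\dagger$ as in \eqref{e:a_1_dagger}--\eqref{e:to_minkowski}, and applying Lemma \ref{l:take_2d}, the problem reduces to bounding $\|\mathcal{D}_x Q\|_{L^2(\R^2)}$, where
$$
\mathcal{D}_x Q(x,\xi)=2\pi i\int_\R (\xi-\eta)\,\mathcal{D}_x a_1^\dagger(x,\xi,\eta)\,\widehat{p_2}(\xi-\eta,\eta)\,\widehat{f}(\eta)\,d\eta,
$$
since $\widehat{p_2}(\xi-\eta,\eta)$ and $\widehat f(\eta)$ do not depend on $x$.

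The key geometric point is a support reduction. The integrand above vanishes unless $\widehat{p_2}(\xi-\eta,\eta)\neq 0$, which forces $p_2(\cdot,\eta)\not\equiv 0$, i.e. $\eta\in\supp_\xi p_2$, hence $\operatorname{dist}(\eta,\supp_\xi p_1)\geq d$; and it also vanishes unless $\mathcal{D}_x a_1^\dagger(x,\xi,\eta)\neq0$, which (as $\supp_\xi\partial_\xi p_1\subset\supp_\xi p_1$, so $a_1^\dagger(\cdot,\xi,\eta)\equiv0$ whenever the segment $[\eta,\xi]$ misses $\supp_\xi p_1$) forces $[\eta,\xi]\cap\supp_\xi p_1\neq\emptyset$. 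These two conditions together force $|\xi-\eta|\geq d$. Consequently I may replace $(\xi-\eta)\widehat{p_2}(\xi-\eta,\eta)$ by $K(\xi-\eta)\,h(\xi-\eta,\eta)$, where $h(\zeta,\eta):=\langle\zeta\rangle^{s}\widehat{p_2}(\zeta,\eta)$ — so that $\|h(\cdot,\eta)\|_{L^2(\R)}=\|p_2(\cdot,\eta)\|_{H^s(\R)}\leq N_s(p_2)$ uniformly in $\eta$ — and $K(\zeta):=\zeta\langle\zeta\rangle^{-s}\car{|\zeta|\geq d}$. The decisive estimate is
$$
\|K\|_{L^2(\R)}^2=\int_{|\zeta|\geq d}\zeta^2\langle\zeta\rangle^{-2s}\,d\zeta\leq\int_{|\zeta|\geq d}\langle\zeta\rangle^{2-2s}\,d\zeta\lesssim d^{-2(s-3/2)},
$$
the integral converging exactly because $s>3/2$.

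Then I would run the same chain of inequalities as in Lemma \ref{c:commutator_in_sobolev}. Cauchy--Schwarz in $\eta$, with $K(\xi-\eta)$ kept whole in one factor, gives $|\mathcal{D}_x Q(x,\xi)|^2\lesssim\|K\|_{L^2}^2\int_\R|\mathcal{D}_x a_1^\dagger(x,\xi,\eta)|^2|h(\xi-\eta,\eta)|^2|\widehat f(\eta)|^2\,d\eta$; integrating in $x$ and using the Minkowski bound $\int_\R|\mathcal{D}_x a_1^\dagger(x,\xi,\eta)|^2\,dx\leq N_1(\partial_\xi p_1)^2$ from the line following \eqref{e:to_minkowski}; then integrating in $\xi$ and using Plancherel (the change of variable $\zeta=\xi-\eta$) together with $\int_\R|h(\zeta,\eta)|^2\,d\zeta\leq N_s(p_2)^2$ uniformly in $\eta$; and finally integrating in $\eta$ against $|\widehat f(\eta)|^2$, I arrive at
$$
\|\mathcal{D}_x Q\|_{L^2(\R^2)}^2\lesssim\|K\|_{L^2}^2\,N_1(\partial_\xi p_1)^2\,N_s(p_2)^2\,\|f\|_{L^2}^2\lesssim d^{-2(s-3/2)}\,N_1(\partial_\xi p_1)^2\,N_s(p_2)^2\,\|f\|_{L^2}^2,
$$
which, via Lemma \ref{l:take_2d}, is the claim.

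The point requiring care is the \emph{order} of integration: for $s<2$ the kernel $K$ belongs to $L^2(\R)$ but not to $L^1(\R)$, so the Cauchy--Schwarz step in $\eta$ must keep $K$ entirely inside one factor, and one cannot afford a Minkowski-type estimate in $\eta$ (which would produce $\|\widehat f\|_{L^1}$ instead of $\|f\|_{L^2}$); equivalently, the $\eta$-integration must always remain paired with $|\widehat f(\eta)|^2$, and the uniform-in-$\eta$ control of $\|h(\cdot,\eta)\|_{L^2}$ may be used only after a Fubini/Plancherel step in $(\xi,\eta)$. The same estimate can alternatively be obtained while keeping the integration-by-parts structure of Lemma \ref{c:commutator_in_sobolev}: one works with $\mathcal{D}_y p_2(\cdot,\eta)\in H^{s-1}(\R)$ (rather than with $\partial_y\mathcal{D}_y p_2\in L^2$, which would demand $s\geq2$), and the decay $\langle\zeta\rangle^{1-s}$ of its Fourier transform then plays exactly the role of $K$ above, the restriction $|\xi-\eta|\geq d$ being supplied by the same support reduction.
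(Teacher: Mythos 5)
Your proof is correct and follows essentially the same route as the paper: in both cases the decisive step is to isolate an $|\xi-\eta|\geq d$–supported kernel of magnitude $\langle\xi-\eta\rangle^{1-s}$, whose $L^2$ norm furnishes the factor $d^{-(s-3/2)}$, and then to pair the remaining $\langle\xi-\eta\rangle^{s}\widehat{p_2}(\xi-\eta,\eta)$ weight with $N_s(p_2)$ via Cauchy--Schwarz, Minkowski in $x$, and Plancherel in $\xi$. The only difference is cosmetic: you carry out the $y$-integration explicitly and factor $(\xi-\eta)\widehat{p_2}(\xi-\eta,\eta)=K(\xi-\eta)h(\xi-\eta,\eta)$, whereas the paper keeps the $y$-variable and encodes the same splitting as $\gamma_s(\xi-\eta)\,\partial_y\mathcal{D}_y^{s-1}p_2(y,\eta)$, using the bound \eqref{e:good_s_estimate} for $\|\gamma_s\|_{L^2(\mho_d)}$; your alternative remark at the end is precisely the paper's formulation.
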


\begin{proof}
The proof is analogous to the one of Lemma \ref{c:commutator_in_sobolev}. We have
\begin{align*}
\mathfrak{C}(p_1,p_2)f(x) = \int_{\R^3} e^{2\pi i \sigma(x,\xi,y,\eta)} \gamma(\xi-\eta)  \big(p_1(x,\xi) - p_1(x,\eta) \big) \mathcal{D}_y p_2(y,\eta) \widehat{f}(\eta)  d\eta dy d\xi.
\end{align*}
We then integrate by parts one more time in $y$, using the identity
$$
\frac{1}{2\pi i (\xi-\eta)} \partial_y \, e^{2\pi i (\xi-\eta) y}  = e^{2\pi i (\xi-\eta)y},
$$
to obtain
\begin{align*}
\mathfrak{C}(p_1,p_2)f(x) = \int_{\R^3} e^{2\pi i \sigma(x,\xi,y,\eta)} \gamma(\xi-\eta)  \left( \frac{p_1(x,\xi) - p_1(x,\eta)}{2\pi i (\xi - \eta)} \right) \partial_y \mathcal{D}_y p_2(y,\eta) \widehat{f}(\eta)  d\eta dy d\xi.
\end{align*}
Moreover, we also have
\begin{align*}
\mathfrak{C}(p_1,p_2)f(x) & \\[0.2cm]
 & \hspace*{-1.5cm} = \int_{\R^3} e^{2\pi i \sigma(x,\xi,y,\eta)} \gamma_s(\xi-\eta)  \left( \frac{p_1(x,\xi) - p_1(x,\eta)}{2\pi i (\xi - \eta)} \right)  \partial_y \mathcal{D}^{s-1}_y p_2(y,\eta) \widehat{f}(\eta)  d\eta dy d\xi,
\end{align*}
where
\begin{equation}
\label{e:gamma_s}
\gamma_s( \xi - \eta) := \frac{1}{(1 + 2\pi i (\xi - \eta))^{s-1}}.
\end{equation}
Therefore, it is sufficient to use the fact that
\begin{equation}
\label{e:good_s_estimate}
\Vert \gamma_s \Vert_{L^2 (\mho_d)} = \left( \int_{\vert \eta \vert \geq d} \frac{d\eta}{(1 + (2\pi \eta)^2)^{s-1}}  \right)^{\frac{1}{2}} = O\left( d^{-(s-\frac{3}{2})}  \right),
\end{equation}
and the rest of estimates given in the proof of Lemma \ref{c:commutator_in_sobolev}.
\end{proof}

\begin{corol}
\label{c:localized_disjoint_in_sobolev}
Let $s > 3/2$. Let $p_1 \in \mathcal{N}_{1,1}^{m_1}$ and $p_2 \in \mathcal{N}_{s,0}^{m_2}$ with $m_1+m_2-1 \leq 0$. Set $\mu = \max \{m_1,m_2,0 \}$. Given $\varphi \in \mathcal{C}_c^\infty(\R)$, assume that
$$
d = \operatorname{dist} \big( 0, \supp \varphi) > 0.
$$
Then
$$
\Vert \mathfrak{C}_{t,\varphi}(p_1,p_2) \Vert_{\mathcal{L}(L^2)} \lesssim t^{s-1/2} \, d^{-(s - 3/2)} \Vert \varphi \Vert_{W^{1,\infty}(\R)} \vert I_\varphi \vert^{\mu} N_{1,0}^{m_1-1}( \partial_\xi p_1) N_{s,0}^{m_2}(p_2) .
$$
\end{corol}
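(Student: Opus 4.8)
The plan is to run the fractional integration by parts of Lemma~\ref{c:disjoint_in_sobolev} inside the semiclassical bookkeeping introduced in the proof of Corollary~\ref{c:localized_castro} (and reused in Corollary~\ref{c:localized_commutator_in_sobolev}); the $\mathcal{M}$-class analogue of the statement to be proved is Corollary~\ref{c:localized_disjoint_supports}. Starting from the representation \eqref{e:localized_semiclassical_commutator}, I would first absorb the growth of $p_2$ in $\eta$ by setting, as in \eqref{e:new_symbol}, $a_1(x,\xi,\eta) = \varphi(\xi-\eta)\big(p_1(x,t\xi)-p_1(x,t\eta)\big)(1+2\pi i t\eta)^{m_2}$ and $p_2^\dagger(y,t\eta) = (1+2\pi i t\eta)^{-m_2}p_2(y,t\eta)$, noting that $\Vert p_2^\dagger(\cdot,t\eta)\Vert_{H^s(\R_y)}\lesssim N_{s,0}^{m_2}(p_2)$ uniformly in $\eta$. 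Then, following the proof of Lemma~\ref{c:commutator_in_sobolev}, I would integrate by parts once in $y$ to produce the factor $\gamma(\xi-\eta)$ together with $\mathcal{D}_y p_2^\dagger$; replace $\gamma(\xi-\eta)\mathcal{D}_y$ by $\gamma_s(\xi-\eta)\mathcal{D}_y^{s-1}$ as in Lemma~\ref{c:disjoint_in_sobolev}, with $\gamma_s$ given by \eqref{e:gamma_s}; and integrate by parts once more in $y$ via \eqref{e:identity_w_r_t_y}, which turns $a_1$ into
$$
a_1^\dagger(x,t\xi,t\eta) = t\cdot\frac{\varphi\big(t(\xi-\eta)\big)\big(p_1(x,t\xi)-p_1(x,t\eta)\big)(1+2\pi i t\eta)^{m_2}}{2\pi i t(\xi-\eta)}
$$
and $\mathcal{D}_y^{s-1}p_2^\dagger$ into $\partial_y\mathcal{D}_y^{s-1}p_2^\dagger$, whose $L^2(\R_y)$-norm is still bounded by $N_{s,0}^{m_2}(p_2)$ because $p_2(\cdot,\xi)\in H^s(\R)$. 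Applying Lemma~\ref{l:take_2d}, then Cauchy--Schwarz in $\eta$ with weight $\gamma_s$, the Minkowski estimate \eqref{e:to_minkowski} for $\mathcal{D}_x a_1^\dagger$, and Plancherel in $\xi$, reproduces the chain of estimates from the proof of Lemma~\ref{c:commutator_in_sobolev}, now carrying the extra factors coming from $t$, $\varphi$ and $I_\varphi$.

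The exponent $t^{s-1/2}$ arises from two independent sources. The mean value theorem, as in Corollary~\ref{c:localized_castro}, yields
$$
\Big(\int_\R\big\vert\mathcal{D}_x a_1^\dagger(x,t\xi,t\eta)\big\vert^2\,dx\Big)^{1/2}\lesssim t\,\Vert\varphi\Vert_{L^\infty}\,\vert I_\varphi\vert^{\mu}\,N_{1,0}^{m_1-1}(\partial_\xi p_1),
$$
where $\vert I_\varphi\vert^{\mu}$ absorbs the product of $\vert 1+2\pi i t\eta\vert^{m_2}$ and $(1+\vert t\eta + st(\xi-\eta)\vert)^{m_1-1}$ over the support of $\varphi(t(\xi-\eta))$, using $m_1+m_2-1\le 0$ together with $0\in I_\varphi$ (so $st(\xi-\eta)\in I_\varphi$ for $0\le s\le 1$). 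The remaining factor $t^{s-3/2}$ comes from $\gamma_s$: on the support of $\varphi(t(\xi-\eta))$ one has $\vert\xi-\eta\vert\ge d/t$, so the computation \eqref{e:good_s_estimate} gives
$$
\Vert\gamma_s\Vert_{L^2(\{\vert\xi-\eta\vert\ge d/t\})} = O\big((d/t)^{-(s-3/2)}\big) = O\big(t^{s-3/2}\,d^{-(s-3/2)}\big).
$$
Multiplying these contributions with the bound $N_{s,0}^{m_2}(p_2)$ for $\partial_y\mathcal{D}_y^{s-1}p_2^\dagger$ and with $\Vert f\Vert_{L^2}$ (obtained after Plancherel in $\xi$) produces the asserted estimate; in the statement $\Vert\varphi\Vert_{W^{1,\infty}}$ can in fact be replaced by $\Vert\varphi\Vert_{L^\infty}$, since, as in Lemma~\ref{c:commutator_in_sobolev}, no integration by parts in the momentum variables is performed and $\varphi$ is never differentiated, but I would keep the $W^{1,\infty}$-norm for uniformity with the earlier corollaries. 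The finitely many terms produced by the product rule (for instance when $\mathcal{D}_y$ contributes its $-\partial_y$ part) are estimated in the same way.

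The step I expect to be the main obstacle is making the passage from $\gamma(\xi-\eta)\mathcal{D}_y$ to $\gamma_s(\xi-\eta)\mathcal{D}_y^{s-1}$ rigorous for non-integer $s$ and checking that, after the additional $\partial_y$, the symbol $\partial_y\mathcal{D}_y^{s-1}p_2^\dagger(\cdot,t\eta)$ indeed lies in $L^2(\R_y)$ with norm $\lesssim N_{s,0}^{m_2}(p_2)$ uniformly in $\eta$. Interpreting $\mathcal{D}_y^{s-1}$ as the Fourier multiplier $(1+2\pi i\zeta)^{s-1}$ in the $y$-variable, the multiplier $\partial_y\mathcal{D}_y^{s-1}$ has symbol $2\pi i\zeta(1+2\pi i\zeta)^{s-1}$ of order $s$, so this reduces to the continuity of $H^s(\R_y)\to L^2(\R_y)$, which is precisely where the hypothesis $p_2\in\mathcal{N}_{s,0}^{m_2}$ with $s>3/2$ enters; the condition $s>3/2$ is also what makes $\gamma_s\in L^2(\R)$ and hence makes the Cauchy--Schwarz step and \eqref{e:good_s_estimate} meaningful. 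Apart from this point, the argument is a routine, if somewhat lengthy, combination of the proofs of Lemma~\ref{c:disjoint_in_sobolev}, Corollary~\ref{c:localized_castro}, and Corollary~\ref{c:localized_commutator_in_sobolev}.
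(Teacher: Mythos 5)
Your proposal is correct and implements exactly the argument the paper leaves implicit for this corollary: it combines the fractional integration by parts of Lemma~\ref{c:disjoint_in_sobolev} (the $\gamma_s$ weight and the reading of $\mathcal{D}_y^{s-1}$ as a Fourier multiplier of order $s-1$ in $y$) with the semiclassical bookkeeping of Corollaries~\ref{c:localized_castro} and~\ref{c:localized_commutator_in_sobolev} (the symbols $a_1^\dagger$ and $p_2^\dagger$, and the $\vert I_\varphi\vert^\mu$ factor from $m_1+m_2-1\le 0$ together with $0\in I_\varphi$). Your power count $t\cdot(d/t)^{-(s-3/2)}=t^{s-1/2}d^{-(s-3/2)}$ is precise and reproduces the stated estimate.
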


\begin{lemma}
\label{c:disjoint_mixed}
Let $p_1 \in \mathcal{M}_{1,1}^0$ and $p_2 \in \mathcal{N}_{s,0}^0$ with $s > 3/2$. Assume that
$$
d = \operatorname{dist} \big( \supp_\xi p_1 , \; \supp_\xi p_2 \big) > 0.
$$
Then
$$
\Vert \mathfrak{C}(p_1,p_2) \Vert_{\mathcal{L}(L^2)} \lesssim d^{-(s-3/2)}M_1( \partial_\xi p_1 ) N_{s}( p_2 ).
$$
\end{lemma}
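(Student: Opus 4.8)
The plan is to run the proof of Lemma~\ref{c:disjoint_in_sobolev} while grafting onto it the device used in Lemma~\ref{l:castro} (and, in non-disjoint form, in the first part of Lemma~\ref{c:mixed_commutator}) to accommodate $p_1\in\mathcal{M}_{1,1}^0$ in place of $\mathcal{N}_{1,1}^0$. Starting from the representation \eqref{e:integral_formula_commutator} of $\mathfrak{C}(p_1,p_2)f$, the hypothesis $d=\operatorname{dist}(\supp_\xi p_1,\supp_\xi p_2)>0$ forces $|\xi-\eta|\geq d$ wherever the integrand is nonzero. Two pieces of regularity must be exploited at once. First, since $p_1\in\mathcal{M}_{1,1}^0$ provides only $L^\infty$-control in the position variable, I would recover the $L^2$-in-$x$ decay needed to invoke Lemma~\ref{l:take_2d} by one integration by parts in $\xi$ (identity \eqref{e:equivalencie_2}), which produces the factor $\gamma(x-y)=(1+2\pi i(x-y))^{-1}$ and turns $p_1(x,\xi)-p_1(x,\eta)$ into $(p_1(x,\xi)-p_1(x,\eta))-\partial_\xi p_1(x,\xi)$. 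Second, since $p_2\in\mathcal{N}_{s,0}^0$ with $s>3/2$ and $p_2$ carries no $\eta$-regularity, I would not integrate by parts in $\eta$ but keep $\widehat f(\eta)$ (recovering $\Vert f\Vert_{L^2}$ by a final Plancherel in $\eta$, as in Lemma~\ref{c:commutator_in_sobolev}), and instead perform, exactly as in Lemma~\ref{c:disjoint_in_sobolev}, the (fractional) integrations by parts in $y$ that move onto $p_2$ a $y$-derivative of order $\leq s$ and generate the kernel $\gamma_s(\xi-\eta)=(1+2\pi i(\xi-\eta))^{-(s-1)}$, which lies in $L^2(\R)$ precisely because $s>3/2$. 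On the terms where $\partial_\xi$ does not fall on $p_1$, one further integration by parts in $y$ replaces the difference $p_1(x,\xi)-p_1(x,\eta)$ by $a_1^\dagger(x,\xi,\eta):=\frac{p_1(x,\xi)-p_1(x,\eta)}{2\pi i(\xi-\eta)}=\frac1{2\pi i}\int_0^1\partial_\xi p_1\big(x,\eta+\tau(\xi-\eta)\big)\,d\tau$, so that by the mean value theorem $\Vert\mathcal{D}_x a_1^\dagger\Vert_{L^\infty(\R^3)}\lesssim M_1(\partial_\xi p_1)$; on the remaining terms $\partial_\xi p_1(x,\xi)$ appears outright and is controlled by $M_1(\partial_\xi p_1)$ in the same way.

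After expanding all product rules, $\mathfrak{C}(p_1,p_2)f$ becomes a finite sum of oscillatory integrals of the shape
\[
\int_{\R^3}e^{2\pi i\sigma(x,\xi,y,\eta)}\,\gamma_s(\xi-\eta)\,\gamma^{(j)}(x-y)\,b(x,\xi,\eta)\,\big(P\,p_2\big)(y,\eta)\,\widehat f(\eta)\,d\eta\,dy\,d\xi,
\]
with $\sigma(x,\xi,y,\eta)=x\xi-\xi y+y\eta$, $j\geq 0$ and $\gamma^{(j)}\in L^2(\R)$, $b\in\{a_1^\dagger,\partial_\xi p_1\}$ satisfying $\Vert\mathcal{D}_x b\Vert_{L^\infty(\R^3)}\lesssim M_1(\partial_\xi p_1)$, and $P\,p_2$ a $y$-derivative of $p_2$ of order $\leq s$. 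For each such term I would proceed as in Lemmas~\ref{l:castro}, \ref{c:commutator_in_sobolev} and \ref{c:disjoint_in_sobolev}: apply Lemma~\ref{l:take_2d} to bound the $L^2(\R)$-norm by $\Vert\mathcal{D}_x G\Vert_{L^2(\R_x\times\R_\xi)}$ for the associated amplitude $G$; use Cauchy--Schwarz in $\eta$ over $\{|\xi-\eta|\geq d\}$ to extract $\Vert\gamma_s\Vert_{L^2(\mho_d)}=O(d^{-(s-3/2)})$ via \eqref{e:good_s_estimate}; use the $L^\infty_x$-bounds on $b$ and $\partial_x b$ to extract $M_1(\partial_\xi p_1)$; use Plancherel in $x$ to absorb $\gamma^{(j)}(x-y)$ at the cost of $\Vert\gamma^{(j)}\Vert_{L^2}\lesssim 1$; and use Plancherel in $\xi$ and then in $\eta$ to reduce what remains to $\int_{\R^2}|(Pp_2)(y,\eta)|^2\,|\widehat f(\eta)|^2\,dy\,d\eta\lesssim N_s(p_2)^2\Vert f\Vert_{L^2}^2$, since $P$ has order $\leq s$ and $p_2(\cdot,\eta)\in H^s$ uniformly in $\eta$. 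Summing the finitely many contributions gives $\Vert\mathfrak{C}(p_1,p_2)f\Vert_{L^2}\lesssim d^{-(s-3/2)}M_1(\partial_\xi p_1)\,N_s(p_2)\,\Vert f\Vert_{L^2}$, as claimed.

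The delicate point is the bookkeeping of how the single $\xi$-integration by parts interacts with the $y$-integrations by parts: the operators $\mathcal{D}_y$ coming from the latter act, through the product rule, on the factor $\gamma(x-y)$ produced by the former, generating the derivatives $\gamma^{(j)}(x-y)$ and possibly diverting $y$-derivatives away from $p_2$; one must check that every resulting term still carries both the full kernel $\gamma_s(\xi-\eta)$ (hence the gain $d^{-(s-3/2)}$) and an $L^2$-integrable factor in $x$ (hence the applicability of Lemma~\ref{l:take_2d}). Doing the $\xi$-integration by parts first, on the raw formula \eqref{e:integral_formula_commutator}, and the $y$-integrations by parts afterwards, keeps this organized, since then $\partial_\xi$ never meets $\gamma_s$ and at most an order-$s$ derivative ever reaches $p_2$, which is admissible because $p_2(\cdot,\eta)\in H^s$ uniformly in $\eta$. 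The restriction $s>3/2$ enters at exactly one place, namely to guarantee $\gamma_s\in L^2(\R)$, and it is this integrability that produces the exponent $-(s-3/2)$; no upper restriction on $s$ is needed.
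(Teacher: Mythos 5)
Your plan tracks the paper's proof in spirit — same representation from \eqref{e:integral_formula_commutator}, same identities for integration by parts, same use of $\gamma_s\in L^2(\mho_d)$ via \eqref{e:good_s_estimate}, the mean value theorem for $a_1^\dagger$, Lemma~\ref{l:take_2d}, Cauchy--Schwarz, and a final Plancherel — and it lands on the right bound. However, the order in which you perform the two integrations by parts introduces a real problem that your own ``finite sum of oscillatory integrals'' claim papers over. You choose to do the $\xi$-integration by parts \emph{first}, so the factor $\gamma(x-y)$ is already present when you perform the (fractional, order $s-1$) integrations by parts in $y$. At that stage the $y$-dependent non-phase factor is the \emph{product} $\gamma(x-y)\,p_2(y,\eta)$, and for non-integer $s$ the operator $\mathcal{D}_y^{s-1}$ applied to that product does \emph{not} split via the Leibniz rule into a finite sum of terms $\gamma^{(j)}(x-y)\,(P p_2)(y,\eta)$ with $P$ of order $\leq s$. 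You would need a fractional Leibniz / Sobolev algebra argument (e.g.\ using that $\gamma\in H^\sigma$ for all $\sigma$, so $\gamma(x-\cdot)p_2(\cdot,\eta)\in H^s$ uniformly) to repair this; as written, the ``finite sum of the shape $\gamma_s\,\gamma^{(j)}\,b\,(Pp_2)$'' step is not justified.

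The paper avoids this entirely by performing the steps in the opposite order: first the $(s-1)$-fold $y$-integration by parts (producing $\gamma_s(\xi-\eta)$ and $\partial_y\mathcal{D}_y^{s-1}p_2$ with \emph{no} $\gamma(x-y)$ yet present, so the fractional derivatives fall only on $p_2$), then the single extra $y$-integration by parts to form $a_1^\dagger$, and only then the $\xi$-integration by parts using \eqref{e:equivalencie_2}. At that final stage $\mathcal{D}_\xi$ acts on $Q_s(x,\xi,\eta)=\gamma_s(\xi-\eta)\,a_1^\dagger(x,\xi,\eta)$, which is computed explicitly; the term $\partial_\xi\gamma_s$ that you were trying to avoid is in fact completely harmless — it is just a constant times $\gamma_{s+1}(\xi-\eta)$, which has \emph{more} decay — so your stated motivation for reversing the order (``$\partial_\xi$ never meets $\gamma_s$'') is not a benefit. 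The pointwise bound \eqref{e:mean_value_with_s} on $\mathcal{D}_x\mathcal{D}_\xi Q_s$, together with the $L^2(\mho_d)$ bound on the leftover $\gamma$-type kernel, then gives the $d^{-(s-3/2)}$ exactly as in \eqref{e:good_s_estimate}. So: right ideas and right constants, but your reordering requires an additional (unstated) fractional Leibniz estimate, whereas the paper's ordering makes it unnecessary.
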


\begin{proof}
The proof is similar to the one before, but we need to use integration by parts in the $\xi$ variable to obtain decayment in the $x$ variable (since now $p_1$ is only bounded in this variable). We have
\begin{align*}
\mathfrak{C}(p_1,p_2)f(x) = \int_{\R^3} e^{2\pi i \sigma(x,\xi,y,\eta)} \gamma(x-y) \mathcal{D}_\xi Q_s(x,\xi,\eta)  \partial_y \mathcal{D}_y^{s-1} p_2(y,\eta) \widehat{f}(\eta)  \, d\eta dy d\xi,
\end{align*}
where
$$
Q_s(x,\xi,\eta) = \gamma_s(\xi-\eta)  \left( \frac{p_1(x,\xi) - p_1(x,\eta)}{2\pi i (\xi - \eta)} \right),
$$
which is differentiable in the $\xi$ variable provided that $\vert \xi - \eta \vert \geq d$. Precisely,
\begin{align*}
\partial_\xi  Q_s(x,\xi,\eta) & = \partial_\xi \gamma_s(\xi-\eta)  \left( \frac{p_1(x,\xi) - p_1(x,\eta)}{2\pi i (\xi - \eta)} \right) - \gamma_s(\xi-\eta)  \left( \frac{p_1(x,\xi) - p_1(x,\eta)}{2\pi i(\xi - \eta)^2} \right) \\[0.2cm]
 & \quad + \gamma_s(\xi-\eta) \frac{\partial_\xi p_1(x,\xi)}{2\pi i (\xi - \eta)}.
\end{align*}
Then, using the mean-value theorem, one has
\begin{equation}
\label{e:mean_value_with_s}
\sup_{\vert \xi - \eta \vert \geq d} \vert \mathcal{D}_x \mathcal{D}_\xi Q_s(x,\xi,\eta) \vert \lesssim d^{-(s-2)}M_1(\partial_\xi p_1).
\end{equation}
As in the previous proofs, we use that $\Vert \mathfrak{C}(p_1,p_2)f \Vert_{L^2} \leq \Vert \mathcal{D}_x G \Vert_{L^2(\R^2)} $, where
$$
G(x,\xi) = \int_{\R^2} e^{2\pi i \sigma(x,\xi,y,\eta)} \gamma(x-y) \mathcal{D}_\xi Q_s(x,\xi,\eta)  \partial_y \mathcal{D}_y^{s-1} p_2(y,\eta) \widehat{f}(\eta)  d\eta dy.
$$
By the Cauchy-Schwartz inequality,  \eqref{e:good_s_estimate} and \eqref{e:mean_value_with_s}, we get
\begin{align*}
\vert \mathcal{D}_x G(x,\xi) \vert^2 \lesssim \big( d^{-(s-3/2)} \big)^2 M_1(\partial_\xi p_1)^2 \int_{\R} \left \vert  \int_\R e^{2\pi i( \eta - \xi)y} \mathcal{D}_x \gamma(x-y)  \partial_y \mathcal{D}^{s-1}_y p_2(y,\eta) \widehat{f}(\eta) dy \right \vert^2 d\eta.
\end{align*}
By Plancherel and Fubini, as in the end of the proof of Lemma \ref{l:castro}, we conclude that
$$
\Vert \mathcal{D}_x G \Vert_{L^2(\R^2)} \lesssim d^{-(s-3/2)}M_1( \partial_\xi p_1) N_{s}(p_2) \Vert f \Vert_{L^2}.
$$
\end{proof}

\begin{corol}
\label{c:localized_mixed_disjoint}
Let $s > 3/2$. Let $p_1 \in \mathcal{M}_{1,1}^{m_1}$ and $p_2 \in \mathcal{N}_{s,0}^{m_2}$ with $m_1+ m_2 - 1 \leq 0$. Set $\mu = \max \{m_1,m_2,0 \}$. Given $\varphi \in \mathcal{C}_c^\infty(\R)$, assume that
$$
d = \operatorname{dist} \big( 0, \supp \varphi) > 0.
$$
Then
$$
\Vert \mathfrak{C}_{t,\varphi}(p_1,p_2) \Vert_{\mathcal{L}(L^2)} \lesssim t^{s-1/2} \, d^{-(s-3/2)} \vert I_\varphi \vert^{\mu} M_{1,0}^{m_1-1}( \partial_\xi p_1 ) N_{s,0}^{m_2}( p_2 ).
$$
\end{corol}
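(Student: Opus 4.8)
The plan is to prove Corollary~\ref{c:localized_mixed_disjoint} by running the argument of Lemma~\ref{c:disjoint_mixed} in the $\varphi$-localized, semiclassical setting, overlaying exactly the modifications that carried Lemma~\ref{l:castro} into Corollary~\ref{c:localized_castro} (and Lemma~\ref{c:disjoint_in_sobolev} into Corollary~\ref{c:localized_disjoint_in_sobolev}). Concretely, I would start from the representation \eqref{e:localized_semiclassical_commutator} of $\mathfrak{C}_{t,\varphi}(p_1,p_2)$ and, as in the proof of Corollary~\ref{c:localized_castro}, replace the factor $p_1(x,t\xi)-p_1(x,t\eta)$ by the weighted symbol $a_1(x,t\xi,t\eta)=\varphi(t(\xi-\eta))\big(p_1(x,t\xi)-p_1(x,t\eta)\big)(1+2\pi i t\eta)^{m_2}$ of \eqref{e:new_symbol}, and $p_2(y,t\eta)$ by $p_2^\dagger(y,t\eta)=(1+2\pi i t\eta)^{-m_2}p_2(y,t\eta)$, which satisfies $\sup_\eta \|p_2^\dagger(\cdot,t\eta)\|_{H^s(\R)}\le N_{s,0}^{m_2}(p_2)$. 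The hypothesis $m_1+m_2-1\le 0$ is used, precisely as in Corollary~\ref{c:localized_castro}, to absorb the weight $(1+|t\eta|)^{m_2}$ against the decay of $\partial_\xi p_1$ on the strip $\Omega_\varphi$, producing the factor $|I_\varphi|^{\mu}$.

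Next, following Lemma~\ref{c:disjoint_mixed}, I would carry out a chain of integrations by parts in the $y$ variable: one ``singular'' step via \eqref{e:identity_w_r_t_y}, which produces the factor $(2\pi i(\xi-\eta))^{-1}$ and, keeping $p_1(x,t\xi)-p_1(x,t\eta)$ in difference-quotient form (so that its $\xi$-derivative involves only $p_1$ and $\partial_\xi p_1$, never a second $\xi$-derivative, which $\mathcal{M}_{1,1}^{m_1}$ does not provide), turns $a_1$ into $a_1^\dagger(x,t\xi,t\eta)=t\cdot\varphi(t(\xi-\eta))\big(p_1(x,t\xi)-p_1(x,t\eta)\big)(1+2\pi i t\eta)^{m_2}\big/\big(2\pi i\,t(\xi-\eta)\big)$ with $\sup|\mathcal{D}_x a_1^\dagger|\lesssim t\,|I_\varphi|^{\mu}M_{1,0}^{m_1-1}(\partial_\xi p_1)$ by the mean value theorem, exactly as in Corollary~\ref{c:localized_castro}; and further $\mathcal{D}_y$-steps that produce the multiplier $\gamma_s(\xi-\eta)$ from \eqref{e:gamma_s} while landing $\partial_y\mathcal{D}_y^{s-1}$ on $p_2^\dagger$, with $\|\partial_y\mathcal{D}_y^{s-1}p_2^\dagger(\cdot,t\eta)\|_{L^2_y}\lesssim N_{s,0}^{m_2}(p_2)$ (for non-integer $s$ both $\mathcal{D}_y^{s-1}$ and $\gamma_s=\gamma^{s-1}$ are read as fractional powers, which is legitimate since $p_2(\cdot,t\eta)\in H^s$ uniformly in $\eta$). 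Then, because $p_1$ is merely bounded, not $L^2$, in $x$, I would integrate by parts once in $\xi$ via \eqref{e:equivalencie_2} to produce $\gamma(x-y)$, which is what is needed to invoke Lemma~\ref{l:take_2d}.

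After applying Lemma~\ref{l:take_2d}, then Cauchy--Schwarz in $\eta$, and finally Plancherel and Fubini in $x$, $\xi$, $y$ exactly as in the proofs of Lemmas~\ref{l:castro} and \ref{c:disjoint_mixed}, the whole expression is reduced to $\|f\|_{L^2}$ times products of symbol seminorms, and the two gains become visible. The cutoff $\varphi(t(\xi-\eta))$ confines the $\eta$-integration to $\{|\xi-\eta|\ge d/t\}$ because $\operatorname{dist}(0,\operatorname{supp}\varphi)=d$, so that $\|\gamma_s(\xi-\cdot)\|_{L^2(\{|\xi-\eta|\ge d/t\})}=O\big((d/t)^{-(s-3/2)}\big)=O\big(t^{s-3/2}d^{-(s-3/2)}\big)$ by \eqref{e:good_s_estimate}, while the remaining $x$- and $\xi$-derivatives of $a_1^\dagger$ are controlled in $L^\infty$ by $t\,|I_\varphi|^{\mu}M_{1,0}^{m_1-1}(\partial_\xi p_1)$ as in \eqref{e:mean_value_with_s}. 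Multiplying the single factor $t$ coming from the singular $y$-integration by parts by the factor $t^{s-3/2}d^{-(s-3/2)}$ coming from $\gamma_s$ yields the claimed $t^{s-1/2}d^{-(s-3/2)}$; terms in which $\xi$- or $\eta$-derivatives fall on $\varphi$, $p_1$ or $p_2^\dagger$ rather than triggering the singular step are handled identically, each producing one further power of $t$ (harmless since $t\le 1$) and not affecting the $d$-decay because $\operatorname{supp}\varphi'\subset\operatorname{supp}\varphi$. (The $\varphi$-dependent constant is carried along as in Corollaries~\ref{c:localized_castro} and \ref{c:localized_disjoint_in_sobolev}.)

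The step I expect to be the main obstacle is the simultaneous bookkeeping of the two small parameters: one must extract the full decay $d^{-(s-3/2)}$ and, at the same time, precisely the power $t^{s-1/2}$, while also carrying out the $\xi$-integration by parts forced by $p_1\in\mathcal{M}_{1,1}^{m_1}$. Concretely, one has to check that differentiating $\gamma_s(\xi-\eta)$ and the singular factor $(\xi-\eta)^{-1}$ in $\xi$ does not spoil the $\gamma_s$-estimate --- it does not, since on the relevant region $|\xi-\eta|^{-1}\le t/d$, so each such differentiation only improves the powers of $t$ and $d^{-1}$, as in \eqref{e:mean_value_with_s} --- and that the fractional-power reading of $\mathcal{D}_y^{s-1}$ and of $\gamma_s=\gamma^{s-1}$ for non-integer $s$ is compatible with the $H^s$-control on $p_2$ and with the Plancherel/Fubini step. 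Once these points are checked, the remaining estimates are a routine transcription of those already obtained for Lemma~\ref{l:castro}, Corollary~\ref{c:localized_castro} and Lemma~\ref{c:disjoint_mixed}.
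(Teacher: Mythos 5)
Your proposal is correct and follows precisely the route the paper intends for this corollary: starting from the $\varphi$-localized semiclassical representation \eqref{e:localized_semiclassical_commutator}, transferring the weight $(1+2\pi i t\eta)^{m_2}$ onto $a_1$ as in Corollary \ref{c:localized_castro}, performing the singular $y$-step (producing the factor $t$ and the difference quotient bounded via mean value theorem by $t\,|I_\varphi|^\mu M^{m_1-1}_{1,0}(\partial_\xi p_1)$), extracting $\gamma_s$ with $\partial_y\mathcal{D}_y^{s-1}$ landing on $p_2^\dagger$, and then integrating by parts once in $\xi$ to produce $\gamma(x-y)$ for Lemma \ref{l:take_2d} since $p_1$ is only $L^\infty$ in $x$, exactly as in Lemma \ref{c:disjoint_mixed}. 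Your accounting of the two small parameters — the factor $t$ from the singular $y$-step and $\|\gamma_s\|_{L^2(\{|\xi-\eta|\ge d/t\})}=O\big(t^{s-3/2}d^{-(s-3/2)}\big)$ from \eqref{e:good_s_estimate}, multiplying to $t^{s-1/2}d^{-(s-3/2)}$ — reproduces the stated bound, and your remark that further $\xi$-differentiations of $\gamma_s$ and of $(\xi-\eta)^{-1}$ only improve the powers on the region $|\xi-\eta|\ge d/t$ is the right way to check that the $\mathcal{D}_\xi$-step does not spoil the estimate.
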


\section{Semiclassical estimates on Sobolev spaces}

In this Section we establish some semiclassical estimates concerning the action of our operators $d(x,tD)$, $p(x,tD)$ and $p^{-1}(x,tD)$, as well as certain commutators between them, on the Sobolev spaces $H^s_t(\R)$. Despite we focus on these particular operators, we will only use their properties as operators having symbols in the classes introduced in Section \ref{s:symbol_classes}, so the techniques below can be used elsewhere.

Some techniques of paradifferential calculus are useful to extend the results of \cite{Lannes06} to the semiclassical framework. In particular, we show in Proposition \ref{p:lannes_and_texier} below how to use the techniques of \cite{Lannes06} and \cite{Tex07} to improve our Corollary \ref{c:first_corol} to fractional orders. However, the estimates of \cite{Lannes06} and \cite{Tex07} concerning commutators require more regularity in $\xi$ at the origin, in order to get satisfactory semiclassical estimates. Notice that our symbols have only one derivative in the $\xi$ variable bounded in $L^\infty$ near the origin. We avoid  the use of paradifferential calculus in our commutator estimates by requiring a bit more of regularity in the $x$ variable (see \textbf{(C2')} in Definition \ref{d:admissible}).

We first show the following lemma that link the seminorms of semiclassical symbols with those of non-semiclassical ones.

\begin{lemma}
\label{l:from_non_semiclassical_to_semiclassical}
Let $m \geq 0$ and let $a \in \mathcal{M}_{j,k}^m$. For every $t \in (0,1]$, set  $a_t(x,\xi) := a(x,t\xi)$. Then
$$
\sup_{t \in (0,1]}M_{j,k}^m(a_t) \leq M_{j,k}^m(a).
$$
Analogously, if $a \in \mathcal{N}_{s,k}^m$, then
$$
\sup_{t \in (0,1]} N_{s,k}^m(a_t) \leq N_{s,k}^m(a).
$$
\end{lemma}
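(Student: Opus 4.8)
The plan is to reduce both statements to a single elementary scaling inequality. First I would apply the chain rule: for multi-indices $\alpha \le j$, $\beta \le k$ one has
\[
\partial_x^\alpha \partial_\xi^\beta a_t(x,\xi) = t^{|\beta|}\,(\partial_x^\alpha \partial_\xi^\beta a)(x,t\xi).
\]
Substituting this into the definition of $M_{j,k}^m$ and using $|\partial_x^\alpha \partial_\xi^\beta a(x,\eta)| \le M_{j,k}^m(a)\,(1+|\eta|)^{m-|\beta|}$ with $\eta = t\xi$, I obtain
\[
(1+|\xi|)^{|\beta|-m}\,\big|\partial_x^\alpha \partial_\xi^\beta a_t(x,\xi)\big| \le M_{j,k}^m(a)\; t^{|\beta|}\left(\frac{1+|\xi|}{1+t|\xi|}\right)^{|\beta|-m}.
\]
Hence it suffices to check that the scalar quantity $t^{b}\big((1+r)/(1+tr)\big)^{b-m}$ is at most $1$ for every $t\in(0,1]$, $r\ge 0$ and $0\le b\le k$, provided $m\ge 0$.

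The key elementary fact is that $1+tr \ge t(1+r)$ whenever $t\le 1$, i.e. $1 \le \dfrac{1+r}{1+tr} \le \dfrac1t$. I would then split into two cases according to the sign of $b-m$. If $b\ge m$, raising $\frac{1+r}{1+tr}\le \frac1t$ to the nonnegative power $b-m$ gives $\big(\frac{1+r}{1+tr}\big)^{b-m}\le t^{m-b}$, so that $t^{b}\big(\frac{1+r}{1+tr}\big)^{b-m}\le t^{m}\le 1$ since $m\ge 0$ and $t\le1$. If $b<m$, then $\big(\frac{1+r}{1+tr}\big)^{b-m} = \big(\frac{1+tr}{1+r}\big)^{m-b}\le 1$ and $t^{b}\le 1$, so the product is again $\le 1$. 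Taking the supremum over $\alpha,\beta,x,\xi$ and then over $t\in(0,1]$ yields $\sup_{t\in(0,1]}M_{j,k}^m(a_t)\le M_{j,k}^m(a)$.

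For the class $\mathcal{N}_{s,k}^m$ the argument is verbatim the same: the chain rule gives $\partial_\xi^\beta a_t(\cdot,\xi) = t^{|\beta|}(\partial_\xi^\beta a)(\cdot,t\xi)$, the $H^s(\R_x)$-norm only involves the $x$-variable and therefore commutes with the rescaling of $\xi$, and one reuses the scalar inequality above together with $\|\partial_\xi^\beta a(\cdot,\eta)\|_{H^s(\R)}\le N_{s,k}^m(a)(1+|\eta|)^{m-|\beta|}$. There is no serious obstacle in this proof; the only points requiring care are the case distinction on the sign of $|\beta|-m$ (since $k$ may be larger or smaller than $m$) and the use of the hypothesis $m\ge 0$, which is precisely what makes the final step $t^{m}\le 1$ work and would fail for negative $m$.
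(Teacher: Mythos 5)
Your proof is correct and follows essentially the same route as the paper: reduce to the scalar inequality $t^{\beta}\bigl((1+|\xi|)/(1+t|\xi|)\bigr)^{\beta-m}\le 1$ and split on the sign of $\beta-m$. The only (minor) difference is in the case $\beta>m$, where the paper checks that $u(t)=t^{\beta}\bigl((1+|\xi|)/(1+t|\xi|)\bigr)^{\beta-m}$ is nondecreasing with $u(1)=1$, while you use the elementary bound $1+t|\xi|\ge t(1+|\xi|)$ to get $t^{\beta}\bigl((1+|\xi|)/(1+t|\xi|)\bigr)^{\beta-m}\le t^{m}\le1$ directly; both arguments are equally valid and both hinge on the hypothesis $m\ge0$.
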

\begin{proof}
For every $\alpha \leq j$ and $\beta \leq k$, one has
$$
\vert \partial_x^\alpha \partial_\xi^\beta a(x,\xi) \vert \leq M_{j,k}^m(a)(1+ \vert \xi \vert)^{m- \beta}.
$$
Then
\begin{align*}
(1 + \vert \xi \vert)^{\beta - m} \vert \partial_x^\alpha \partial_\xi^\beta a_t(x,\xi) \vert & \leq  t^\beta (1 + \vert \xi \vert)^{\beta - m} \vert \partial_x^\alpha \partial_\xi^\beta a(x,t\xi) \vert \\[0.2cm]
 & \leq M_{j,k}^m(a) t^\beta \left( \frac{1 + \vert \xi \vert}{1 + t \vert  \xi \vert} \right)^{\beta - m}.
\end{align*}
If $m - \beta \geq0$, the latter expression is uniformly bounded by $M_{j,k}^m(a)$ for all $t \in (0,1]$. Otherwise, if $\beta - m > 0$, then the function
$$
u(t) = t^\beta \left( \frac{1 + \vert \xi \vert}{1 + t \vert  \xi \vert} \right)^{\beta - m}
$$
satisfies that $u(0) = 0$, $u(1) = 1$ and $u'(t) \geq 0$ provided that $m \geq 0$. This proves the first assertion. The second follows in the same way.
\end{proof}

\begin{prop}
\label{p:firs_prop} Assume \textnormal{\textbf{(C2)}}. Let $m = \lceil m_1 \rceil$. Then the operator $p(x,tD) :  L^2(\R) \longrightarrow H_t^{-m}(\R) $ is conti\-nuous for every $t \in (0,1]$, and
\begin{align}
\label{e:Lannes}
\Vert p(x,tD) f \Vert_{H_t^{-m}(\R)} \lesssim M_{m +1,1}^{m}(p)  \Vert f \Vert_{L^2(\R)}.
\end{align}
\end{prop}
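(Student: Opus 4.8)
The plan is to strip off the semiclassical parameter via the conjugation of Remark~\ref{r:isometry} and then estimate the resulting non-semiclassical operator by an oscillatory–integral argument patterned on the proof of Lemma~\ref{l:castro}, but performing $m+1$ integrations by parts in the intermediate position variable so that the $(1+|\xi|)^{-m}$ smoothing of $(1+iD)^{-m}$ is converted, through Peetre's inequality, into enough decay to beat the $(1+|\xi|)^{m_1}$ growth of $p$.

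\textbf{Step 1 (reduction to $t=1$).} By Remark~\ref{r:isometry}, $U_t$ is unitary from $H^{-m}$ onto $H^{-m}_t$ and from $L^2$ onto $L^2$, and $U_t^{*}p(x,tD)U_t=p^{t}(x,D)$ with $p^{t}(x,\xi)=p(tx,\xi)$; hence $\Vert p(x,tD)f\Vert_{H^{-m}_t}=\Vert p^{t}(x,D)g\Vert_{H^{-m}}$ for $g:=U_t^{*}f$, with $\Vert g\Vert_{L^2}=\Vert f\Vert_{L^2}$. Since $\partial_x^{\alpha}p^{t}=t^{\alpha}(\partial_x^{\alpha}p)(tx,\cdot\,)$ and $0<t\le1$, one has $M^{m}_{m+1,1}(p^{t})\le M^{m}_{m+1,1}(p)$ (the dilation analog of Lemma~\ref{l:from_non_semiclassical_to_semiclassical}), and $p^{t}\in\mathcal{M}^{m_1}_{N,1}\subset\mathcal{M}^{m}_{m+1,1}$ because $N\ge1+\lceil m_1\rceil=m+1$ and $m_1\le m$. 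It therefore suffices to prove $\Vert q(x,D)g\Vert_{H^{-m}(\R)}\lesssim M^{m}_{m+1,1}(q)\,\Vert g\Vert_{L^2(\R)}$ for every $q\in\mathcal{M}^{m}_{m+1,1}$.

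\textbf{Step 2 (oscillatory integral and integrations by parts).} Put $w(\xi):=(1+i\xi)^{-m}$, so that $\Vert u\Vert_{H^{-m}}=\Vert w(D)u\Vert_{L^2}$ and $|\partial_\xi^\beta w(\xi)|\lesssim(1+|\xi|)^{-m-\beta}$. Expanding the two quantizations and inserting $\widehat g(\eta)=\int_\R e^{-2\pi i\eta z}g(z)\,dz$,
\[
w(D)\,q(x,D)\,g(x)=\int_{\R^4}e^{2\pi i(x\xi-\xi y+y\eta-\eta z)}\,w(\xi)\,q(y,\eta)\,g(z)\,dz\,d\eta\,dy\,d\xi .
\]
Following verbatim the proof of Lemma~\ref{l:castro}, integrate by parts once in $\eta$ (identity~\eqref{e:equivalencie_1}, which produces a factor $\gamma(y-z)$ and, after the $z$-integration, the kernel $\Gamma^{+}_{g,\gamma}$ of~\eqref{e:Gamma}), once in $\xi$ (identity~\eqref{e:equivalencie_2}, producing $\gamma(x-y)$), and then $m+1$ times in $y$ (identity~\eqref{e:equivalencie_3}, as in the proof of Lemma~\ref{l:disjoint_supports}), producing the factor $\gamma(\xi-\eta)^{m+1}$ together with $\mathcal{D}_y^{\,m+1}$ acting by the Leibniz rule on $\gamma(x-y)$, $\Gamma^{+}_{g,\gamma}$ and $\mathcal{D}_\eta q$. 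Since this calls for at most $m+1$ position-derivatives and one frequency-derivative of $q$, the class $\mathcal{M}^{m}_{m+1,1}$ is precisely what is consumed. Expanding all derivatives yields a finite sum of terms of the shape~\eqref{e:T_term},
\[
T_\iota g(x)=\int_{\R^3}e^{2\pi i(x\xi-\xi y+y\eta)}\gamma_1(\xi-\eta)\,\gamma_2^{\iota}(x-y)\,a_1^{\iota}(\xi)\,a_2^{\iota}(y,\eta)\,\Gamma^{+}_{g,\gamma_3^{\iota}}(y,\eta)\,d\eta\,dy\,d\xi ,
\]
where $\gamma_2^{\iota},\gamma_3^{\iota}$ are derivatives of the function $\gamma$ of~\eqref{e:special_function}, $\gamma_1(w)=(1+2\pi iw)^{-(m+1)}$, $|a_1^{\iota}(\xi)|\lesssim(1+|\xi|)^{-m}$ (a derivative of $w$), and $|a_2^{\iota}(y,\eta)|\lesssim M^{m}_{m+1,1}(q)\,(1+|\eta|)^{m}$ (a derivative of $q$).

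\textbf{Step 3 (balancing the frequencies and conclusion).} The only new point compared with Lemma~\ref{l:castro} is the leftover factor $(1+|\xi|)^{-m}(1+|\eta|)^{m}$ in $a_1^{\iota}a_2^{\iota}$. Peetre's inequality gives $(1+|\eta|)^{m}\lesssim(1+|\xi|)^{m}(1+|\xi-\eta|)^{m}$, hence $(1+|\xi|)^{-m}(1+|\eta|)^{m}\lesssim(1+|\xi-\eta|)^{m}$, and $\widetilde\gamma_1(w):=(1+|w|)^{m}\gamma_1(w)$ lies in $L^2(\R)$ since $|\widetilde\gamma_1(w)|\lesssim(1+|w|)^{-1}$. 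Rewriting $T_\iota$ with $\widetilde\gamma_1$ in place of $\gamma_1$, its amplitude factors as $\big[(1+|\xi|)^{m}a_1^{\iota}(\xi)\big]\big[(1+|\eta|)^{-m}a_2^{\iota}(y,\eta)\big]\big[(1+|\xi|)^{-m}(1+|\eta|)^{m}(1+|\xi-\eta|)^{-m}\big]$, i.e.\ a bounded function of $(\xi,\eta)$ (sup-norm $O(1)$) times a bounded function of $(y,\eta)$ (sup-norm $\lesssim M^{m}_{m+1,1}(q)$); thus $T_\iota$ fits the template~\eqref{e:T_term} with bounded amplitudes and $L^2$ kernel $\widetilde\gamma_1$. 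Then, exactly as in the proof of Lemma~\ref{l:castro} — Lemma~\ref{l:take_2d} in the variables $(x,\xi)$, Cauchy--Schwarz in $\eta$, Plancherel and Fubini, and finally Lemma~\ref{l:elementary} with $\Vert\Gamma^{+}_{g,\gamma_3^{\iota}}\Vert_{L^2}=\Vert\gamma_3^{\iota}\Vert_{L^2}\Vert g\Vert_{L^2}$ — one obtains $\Vert T_\iota g\Vert_{L^2}\lesssim M^{m}_{m+1,1}(q)\Vert g\Vert_{L^2}$, and summing the finitely many $\iota$ concludes.

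The crux is that $q$ has \emph{no} decay in $x$ and \emph{only one} bounded derivative in $\xi$, so neither a Schur test nor the classical Calderón--Vaillancourt theorem is available, and the $\gamma$-function machinery of Lemmas~\ref{l:elementary}--\ref{l:take_2d} must be kept in play throughout. The feature special to this statement is to perform \emph{precisely} $m+1$ integrations by parts in the intermediate variable — matching the $m+1$ position-derivatives of $p$ granted by~\textbf{(C2)} — and to normalise the symbol by $(1+|\eta|)^{-m}$ rather than $(1+|\eta|)^{-m_1}$, so that Peetre's inequality lets $\gamma(\xi-\eta)^{m+1}$ absorb the mismatch $(1+|\xi|)^{-m}(1+|\eta|)^{m_1}$ and the final constant comes out proportional to $M^{m}_{m+1,1}(p)$ rather than to the larger, higher-order $M^{m_1}$-norms that a black-box use of the commutator corollaries above would yield.
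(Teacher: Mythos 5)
Your proof is correct, but it follows a genuinely different route from the one in the paper. The paper keeps the semiclassical parameter $t$ in play throughout: it first performs a single $\xi$-integration by parts to write $p(x,tD)f=\int e^{2\pi ix\xi}\mathcal{D}_\xi p_t(x,\xi)\,\Gamma^+_{f,\gamma}(x,\xi)\,d\xi$, then estimates the $H^{-m}_t$-norm by duality against $g\in H^m_t$, performing $m+1$ integrations by parts in the \emph{outer} position variable $x$; the mismatch between the natural kernel weight $(1+2\pi i(\lambda+\xi))^{-(m+1)}$ and the semiclassical weight $(1+2\pi it\xi)^m$ is then resolved by a binomial expansion $(1+2\pi it\xi)^m=\sum_j(-1)^j\binom{m}{j}(1+2\pi it(\lambda+\xi))^{m-j}(2\pi it\lambda)^j$, which reinstates exactly the $H^j_t$-weights of $g$. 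You, instead, strip $t$ by conjugating with the unitary scaling $U_t$ of Remark~\ref{r:isometry} (after noting the dilation monotonicity $M^m_{m+1,1}(p^t)\le M^m_{m+1,1}(p)$), compose directly with $(1+iD)^{-m}$, and perform $m+1$ integrations by parts in the \emph{intermediate} position variable $y$, producing $\gamma(\xi-\eta)^{m+1}$; Peetre's inequality then absorbs the polynomial mismatch $(1+|\xi|)^{-m}(1+|\eta|)^m$ into this kernel. Both mechanisms land on the same Cauchy–Schwarz/Plancherel/Lemma~\ref{l:elementary} endgame and the same constant $M^m_{m+1,1}(p)$. Your version is closer in spirit to Lemma~\ref{l:disjoint_supports} and to the later commutator corollaries (where the Peetre-type absorption of off-diagonal growth by $\gamma(\xi-\eta)^{m+1}$ reappears), at the cost of the extra reduction step and the verification that $p^t$ stays in $\mathcal{M}^m_{m+1,1}$ uniformly; the paper's version stays purely semiclassical and uses no Peetre trick, at the cost of the slightly more opaque algebraic rearrangement inside $\Gamma_g$. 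Both require $m$ an integer, which is why the paper supplements this Proposition with Proposition~\ref{p:lannes_and_texier} (via paradifferential techniques) for fractional $m_1$.
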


\begin{proof}Let us denote $p_t(x,\xi) = p(x,t\xi)$. Using integration by parts, for any $f \in L^2(\R)$, we have:
$$
p(x,tD)f(x) = A_Q(x;t) = \int_\R e^{2 \pi i x \xi} Q(x,\xi;t) d\xi,
$$
where
$$
Q(x,\xi;t) =  \mathcal{D}_\xi p_t(x,\xi) \Gamma^{+}_{f,\gamma}(x,\xi),
$$
and $\gamma$ is given by \eqref{e:special_function}. We estimate $\Vert A_Q \Vert_{H_t^{-m}}$ by duality, as in the proof of Lemma \ref{l:take_2d}. Take $g \in H^{m}_t(\R)$ and write
\begin{align*}
\int_\R A_Q(x;t) g(x) dx & = \int_{\R^2} e^{2 \pi i x \xi} Q(x,\xi;t) g(x) d\xi dx \\[0.2cm]
 & = \int_{\R^3} e^{2 \pi i (\lambda + \xi) x} Q(x,\xi;t) \widehat{g}(\lambda) d\lambda d\xi dx \\[0.2cm]
 & = \int_{\R^3} e^{2 \pi i (\lambda + \xi) x} Q^\dagger(x,\xi;t) (1 + 2\pi i t \xi)^{m}\widehat{g}(\lambda) d\lambda d\xi dx,
\end{align*}
where $Q^\dagger(x,\xi;t) = Q(x,\xi;t)(1 + 2\pi it\xi)^{-m}$. We next integrate by parts in $x$ to get
\begin{align}
\label{e:dual_int_by_parts}
\int_\R A_Q(x;t) g(x) dx = \int_{\R^2} e^{2 \pi i \xi x}\mathcal{D}_x^{m+1} Q^\dagger(x,\xi;t) \, \Gamma_{g}(x,\xi;t) d\xi dx,
\end{align}
where
\begin{align*}
\Gamma_g(x,\xi;t) & = \int_\R \frac{ (1 + 2\pi it \xi)^{m} \widehat{g}(\lambda) e^{2\pi i \lambda x}}{(1 + 2\pi i (\lambda + \xi))^{m+1}} d\lambda  \\[0.2cm]
 & = \sum_{j=0}^{m} (-1)^j {m \choose j} \int_\R \frac{(1+ 2\pi it(\lambda + \xi))^{m - j} (2\pi i t \lambda)^j \widehat{g}(\lambda) e^{2\pi i \lambda x}}{(1 + 2\pi i (\lambda + \xi))^{m + 1} } d\lambda.
\end{align*}
Using Cauchy-Schwartz inequality and Lemma \ref{l:elementary} we obtain that
\begin{align*}
\left \vert \int_\R A_Q(x) g(x) dx \right \vert & \lesssim \Vert \mathcal{D}^{m+1}_x Q^\dagger \Vert_{L^2(\R^2)} \Vert g \Vert_{H^{m}_t(\R)} \\[0.2cm]
 & \lesssim M_{m+1,1}^0 \big( p^\dagger_t  \big) \Vert f \Vert_{L^2(\R)} \Vert g \Vert_{H^{m}_t(\R)},
\end{align*}
where $p_t^\dagger(x,\xi) = p^\dagger(x,t\xi)$ and $p^\dagger(x,\xi) = p(x,\xi)(1+2\pi i \xi)^{-m}$.
Finally, to obtain \eqref{e:Lannes}, it is sufficient to use Lemma \ref{l:from_non_semiclassical_to_semiclassical}.
\end{proof}

\begin{corol}
\label{c:first_corol} Assume \textnormal{\textbf{(C2')}} and set $m = \lceil m_1 \rceil$. Then
\begin{equation}
\label{e:sobolev_estimate}
\Vert p(x,tD) f \Vert_{H_t^{-m}(\R)} \lesssim  \Vert v \Vert_{H^{s}\times H^{s}} \Vert f \Vert_{L^2(\R)}.
\end{equation}
\end{corol}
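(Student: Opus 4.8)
The plan is to reduce matters to the two building blocks into which $p(x,tD)$ splits under \textbf{(C2')}. Recall from Section~\ref{s:symbol_classes} that, with $\Sigma_p(v(x),\xi) = p(x,\xi)$,
$$
p(x,\xi) = \Pi_p(x,\xi) + \Sigma_p(0,\xi), \qquad \Pi_p \in \mathcal{N}^{m_1}_{s,1}, \quad \Sigma_p(0,\cdot) \in \mathcal{M}^{m_1}_{\infty,1},
$$
and that a quantitative form of Moser's inequality (applied to the $C^\infty$ map $v \mapsto \Sigma_p(v,\xi) - \Sigma_p(0,\xi)$, which vanishes at $v=0$) gives $N^{m_1}_{s,1}(\Pi_p) \lesssim \Vert v \Vert_{H^s \times H^s}$, the implicit constant depending only on the admissible pair through the bounds $c_1 < c < c_2$. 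The Fourier multiplier $\Sigma_p(0,tD)$, whose symbol $(1 + c_0\vert\xi\vert)^{1/c_0}$ has order $1/c_0 < m_1 \le m$, is handled directly by Plancherel: since $\vert \Sigma_p(0,t\xi)\vert \lesssim (1+\vert t\xi\vert)^{m} \lesssim \vert 1 + 2\pi i t\xi\vert^{m}$, we get $\Vert \Sigma_p(0,tD) f\Vert_{H^{-m}_t} \lesssim \Vert f\Vert_{L^2}$ uniformly in $t\in(0,1]$. Hence it suffices to prove
$$
\Vert \Pi_p(x,tD) f \Vert_{H^{-m}_t(\R)} \lesssim N^{m_1}_{s,1}(\Pi_p)\,\Vert f\Vert_{L^2(\R)}, \qquad t \in (0,1],
$$
after which \eqref{e:sobolev_estimate} follows, using in addition Lemma~\ref{l:from_non_semiclassical_to_semiclassical} (applicable since $m_1\ge 0$) to pass the seminorms of $\Pi_p(x,t\xi)$ to those of $\Pi_p$ uniformly in $t$.

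To establish this estimate I would reproduce the duality argument of Proposition~\ref{p:firs_prop}, the only new feature being that $\Pi_p$ lies in a Sobolev-in-$x$ class rather than in $\mathcal{M}^{m_1}_{m+1,1}$. Set $\Pi^\dagger_{p,t}(x,\xi) := \Pi_p(x,t\xi)(1 + 2\pi i t\xi)^{-m}$, which has order $\le 0$; integrate by parts once in the momentum variable (producing a factor $\gamma$ and the function $\Gamma^{+}_{f,\gamma}$), dualize against $g \in H^m_t$, integrate by parts $m+1$ times in $x$, and expand $\mathcal{D}_x^{m+1}\big(\mathcal{D}_\xi \Pi^\dagger_{p,t}\cdot\Gamma^{+}_{f,\gamma}\big)$ by the Leibniz rule. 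In every term where at most $m$ derivatives fall on $\mathcal{D}_\xi \Pi^\dagger_{p,t}$, the hypothesis $s \ge 1 + \lceil m_1\rceil = m+1$ yields $\partial_x^\alpha \mathcal{D}_\xi \Pi^\dagger_{p,t}(\cdot,\xi) \in H^{s-\alpha} \hookrightarrow L^\infty(\R)$ for $\alpha\le m$ (since $s-\alpha\ge 1>1/2$), with norm bounded uniformly in $\xi$ by $N^{m_1}_{s,1}(\Pi_p)$; these terms are then estimated verbatim as in Proposition~\ref{p:firs_prop}, by pulling out this $L^\infty(\R^2)$ bound and invoking \eqref{e:Gamma_1}. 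The delicate term is the one where all $m+1$ derivatives land on $\mathcal{D}_\xi \Pi^\dagger_{p,t}$: there $\partial_x^{m+1}\mathcal{D}_\xi \Pi^\dagger_{p,t}(\cdot,\xi) \in H^{s-m-1}$ lies only in $L^2(\R)$, with $L^2_x$-norm bounded uniformly in $\xi$ by $N^{m_1}_{s,1}(\Pi_p)$, so the uniform $L^\infty$ bound used in Proposition~\ref{p:firs_prop} is unavailable. I would handle it exactly as in the passage from Lemma~\ref{l:castro} to Lemma~\ref{c:commutator_in_sobolev} (and Lemma~\ref{c:disjoint_in_sobolev}): instead of collapsing the $f$-dependence into $\Gamma^{+}_{f,\gamma}$ and discarding the oscillation, one keeps the momentum integral against $\widehat f$ open, applies Lemma~\ref{l:take_2d}, Cauchy--Schwartz and Minkowski's integral inequality to absorb $\partial_x^{m+1}\mathcal{D}_\xi \Pi^\dagger_{p,t}(\cdot,\xi)$ in $L^2_x$ uniformly in $\xi$, and closes the bound by a Plancherel in the momentum variable producing $\int_\R \vert \widehat f(\eta)\vert^2\,d\eta = \Vert f\Vert_{L^2}^2$. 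In this way the frequency integrability that a uniform symbol bound would have supplied (through $\Vert \Gamma^{+}_{f,\gamma}\Vert_{L^2(\R^2)} < \infty$) is instead supplied by $\widehat f \in L^2$.

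I expect the main obstacle to be precisely this top-order term: making the integration-by-parts scheme of Proposition~\ref{p:firs_prop} go through when the $(m+1)$-th $x$-derivative of the symbol is merely square-integrable, which forces one to route the frequency integrability through the $L^2$-bound on $\widehat f$ rather than through an $L^\infty$-bound on the symbol. This is exactly the technical difficulty that the condition $s \ge 1 + \lceil c_1^{-1}\rceil$ in \textbf{(C2')} is designed to accommodate — it is precisely what guarantees that $\Pi_p$ retains $\lceil c_1^{-1}\rceil + 1$ derivatives in the $x$ variable in $L^2$, matching the $m+1$ integrations by parts in $x$ demanded by the target space $H^{-m}_t$ — and is what lets one avoid paradifferential calculus here. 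Collecting the two contributions then yields \eqref{e:sobolev_estimate}, the Fourier-multiplier part being absorbed into the implicit constant, which thus depends only on the admissible pair.
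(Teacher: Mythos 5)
Your decomposition of $p$ into $\Pi_p+\Sigma_p(0,\cdot)$, the duality argument, the $(m+1)$-fold integration by parts in $x$, and the closing use of Moser and Lemma~\ref{l:from_non_semiclassical_to_semiclassical} all match the paper. But your handling of the momentum variable contains a genuine wrinkle, and in fact your "delicate top-order term" is an artifact of an unnecessary step.

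You propose to \emph{first} integrate by parts in $\xi$ (producing $\gamma$ and $\Gamma^+_{f,\gamma}$, as in Proposition~\ref{p:firs_prop}) and then expand $\mathcal{D}_x^{m+1}\big(\mathcal{D}_\xi\Pi^\dagger_{p,t}\cdot\Gamma^+_{f,\gamma}\big)$ by Leibniz. For the lower-order Leibniz terms you pull out an $L^\infty$ bound on $\partial_x^\alpha\mathcal{D}_\xi\Pi^\dagger_{p,t}$ and invoke \eqref{e:Gamma_1}; for the top-order term, where only an $L^2_x$ bound is available, you say to "keep the momentum integral against $\widehat f$ open" instead of collapsing it into $\Gamma^+_{f,\gamma}$. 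That last instruction is not operationally well-defined: the $\xi$-integration by parts that produced $\Gamma^+_{f,\gamma}$ was performed \emph{globally}, before the Leibniz expansion, so it cannot be selectively undone for one Leibniz term. And in fact the top-order term cannot be salvaged within that framework: $\partial_x^{m+1}\mathcal{D}_\xi\Pi^\dagger_{p,t}(\cdot,\xi)$ is only $L^2_x$ uniformly in $\xi$ and $\Gamma^+_{f,\gamma}$ is $L^2(\R^2_{x,\xi})$ (and $L^\infty(\R^2)$, by Young), but the product of an $L^2_x$-uniform-in-$\xi$ function with an $L^2_{x,\xi}$ function need not lie in $L^2_{x,\xi}$ without some $\xi$-decay that is absent here.

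The correct resolution, which the paper uses, is to skip the $\xi$-integration by parts \emph{entirely}: write $K(x,\xi;t)=\Pi_p(x,t\xi)\widehat f(\xi)$ and $K^\dagger=K\,(1+2\pi it\xi)^{-m}$, dualize against $g\in H^m_t$, and integrate by parts $m+1$ times in $x$ \emph{only}. Since $\widehat f$ carries no $x$-dependence, there is no Leibniz expansion at all; the bound is the single step
\begin{equation*}
\Vert\mathcal{D}^{m+1}_x K^\dagger\Vert_{L^2(\R^2)}^2
=\int_{\R^2}\vert\mathcal{D}^{m+1}_x\Pi^\dagger_{p,t}(x,\xi)\vert^2\,\vert\widehat f(\xi)\vert^2\,dx\,d\xi
\le \sup_{\xi}\Vert\mathcal{D}^{m+1}_x\Pi^\dagger_{p,t}(\cdot,\xi)\Vert_{L^2_x}^2\,\Vert f\Vert_{L^2}^2
= N_{m+1}(\Pi^\dagger_{p,t})^2\,\Vert f\Vert_{L^2}^2,
\end{equation*}
which is exactly what \textbf{(C2')} supplies via $s\ge m+1$. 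This is the simplification already flagged in the text just before Lemma~\ref{c:commutator_in_sobolev}: once the symbol sits in an $\mathcal{N}$-class, the $L^2$-decay in $x$ is already present and the $\xi$-integration by parts (whose only purpose in Proposition~\ref{p:firs_prop} is to manufacture $\xi$-integrability from an $L^\infty$ symbol) is not only unnecessary but gets in the way. You correctly identified the top-order obstacle and the right ingredient ($L^2_x$ uniform in $\xi$ plus Plancherel against $\widehat f$), but you should apply it to the whole proof, not just to one Leibniz term.
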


\begin{proof}
We write $p(x,\xi) = \Pi_p(x,\xi) + \Sigma_p(0,\xi)$. Since $\Sigma_p(0,\xi) \in \mathcal{M}_{\infty,1}^{m}$ is a Fourier multiplier, one has
$$
\Vert \Sigma_p(0,tD) f \Vert_{H^{-m}_t} \lesssim \Vert f \Vert_{L^2}.
$$
It remains to show that
$$
\Vert \Pi_p(x,tD) f \Vert_{H^{-m}_t} \lesssim  \Vert v \Vert_{H^{s}\times H^{s}} \Vert f \Vert_{L^2}.
$$
To do this, we write
$$
\Pi_p(x,tD)f(x) = A_K(x;t) = \int_\R e^{2 \pi i x \xi} K(x,\xi;t) d\xi,
$$
where $K(x,\xi;t) = \Pi_p(x,t\xi) \widehat{f}(\xi)$. We estimate $\Vert A_K \Vert_{H_t^{-m}}$ by duality, exactly as in the proof of Proposition \ref{p:firs_prop}, with $K$ instead of $Q$. Take $g \in H^{m}_t(\R)$. Next integrate by parts in $x$ to get
\begin{align}
\label{e:dual_int_by_parts2}
\int_\R A_K(x;t) g(x) dx = \int_{\R^2} e^{2 \pi i \xi x}\mathcal{D}_x^{m+1} K^\dagger(x,\xi;t) \, \Gamma_{g}(x,\xi;t) d\xi dx,
\end{align}
where $K^\dagger(x,\xi;t) = K(x,\xi;t)(1 + 2\pi it\xi)^{-m}$ and
\begin{align*}
\Gamma_g(x,\xi;t) & = \int_\R \frac{ (1 + 2\pi it \xi)^{m} \widehat{g}(\lambda) e^{2\pi i \lambda x}}{(1 + 2\pi i (\lambda + \xi))^{m+1}} d\lambda  \\[0.2cm]
 & = \sum_{j=0}^{m} (-1)^j {m \choose j} \int_\R \frac{(1+ 2\pi it(\lambda + \xi))^{m - j} (2\pi i t \lambda)^j \widehat{g}(\lambda) e^{2\pi i \lambda x}}{(1 + 2\pi i (\lambda + \xi))^{m + 1} } d\lambda.
\end{align*}
Using Cauchy-Schwartz inequality and Lemma \ref{l:elementary} we obtain that
\begin{align*}
\left \vert \int_\R A_K(x) g(x) dx \right \vert & \lesssim \Vert \mathcal{D}^{m+1}_x K^\dagger \Vert_{L^2(\R^2)} \Vert g \Vert_{H^{m}_t(\R)} \\[0.2cm]
 & \lesssim N_{m+1}( \Pi^\dagger_{p,t}) \Vert f \Vert_{L^2(\R)} \Vert g \Vert_{H^{m}_t(\R)},
\end{align*}
where $\Pi^\dagger_{p,t}(x,\xi) = \Pi^\dagger_p(x,t\xi)$ and $\Pi^\dagger_{p}(x,\xi) = \Pi_p(x,\xi)(1 + 2\pi i \xi)^{-m}$. Finally, making use of Lemma \ref{l:from_non_semiclassical_to_semiclassical} and the fact that $\Pi_p$ is smooth in the image of $v$, hence Moser's inequality applies, we conclude that
\begin{align*}
N_{m+1}( \Pi^\dagger_{p,t})  \lesssim N_{m+1,0}^{m}(\Pi_p) \leq C(\Vert v \Vert_{L^\infty})(1 + \Vert v \Vert_{H^s \times H^s}),
\end{align*}
provided that $s \geq 2$.
\end{proof}

Notice that the proofs of Proposition \ref{p:firs_prop} and Corollary \ref{c:first_corol} are particularly simple due to the use of the Leibniz rule as after \eqref{e:dual_int_by_parts2}. The use of paradifferential calculus as in \cite{Lannes06} and \cite{Tex07} allows us to improve this result to the case $m_1$ being non-integer.

\begin{prop}
\label{p:lannes_and_texier}
Assume \textnormal{\textbf{(C2')}}. Then
\begin{equation}
\label{e:fractional_estimate}
\Vert p(x,tD) f \Vert_{H_t^{-m_1}(\R)} \lesssim  \Vert v \Vert_{H^{s}\times H^{s}} \Vert f \Vert_{L^2(\R)},
\end{equation}
provided that $s > m_1$.
\end{prop}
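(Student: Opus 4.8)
The plan is to refine the proof of Corollary~\ref{c:first_corol} by replacing the integer number of integrations by parts in the $x$-variable with a \emph{semiclassical paraproduct decomposition} in the spirit of \cite{Lannes06,Tex07}. As there, I would split $p(x,\xi) = \Pi_p(x,\xi) + \Sigma_p(0,\xi)$. The Fourier multiplier $\Sigma_p(0,\xi) = (1+c_0\vert\xi\vert)^{1/c_0}$ satisfies $\vert(1+2\pi i t\xi)^{-m_1}\Sigma_p(0,t\xi)\vert \lesssim (1+t\vert\xi\vert)^{1/c_0-m_1}\leq 1$ uniformly in $t\in(0,1]$, because $c_0>c_1$ forces $1/c_0<m_1$; hence $\Vert\Sigma_p(0,tD)f\Vert_{H^{-m_1}_t}\lesssim\Vert f\Vert_{L^2}$, and it only remains to bound $\Vert\Pi_p(x,tD)f\Vert_{H^{-m_1}_t}$. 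Recall that $\Pi_p\in\mathcal{N}^{m_1}_{s,1}$ with $N^{m_1}_{s,1}(\Pi_p)\lesssim\Vert v\Vert_{H^s\times H^s}$ by Moser's inequality, that $\Pi_p(\cdot,0)\equiv0$, and that $\Pi_p$ is smooth in $\xi$ away from the origin, the only loss of $\xi$-regularity being the Lipschitz kink of $\vert\xi\vert$ at $\xi=0$.

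For the $x$-dependent part I would perform Bony's decomposition in the $x$-variable at the semiclassical scale, $\Pi_p(x,tD)=T^t+R^t$, where the paradifferential piece $T^t$ retains only the interactions in which the $x$-frequency of the symbol is dominated by the frequency of $f$, and $R^t$ collects the remaining (``high--high'' and ``high--low'') interactions. On $T^t$ the symbol is, near each dyadic output frequency, essentially frozen in $x$, so $T^t$ behaves like a semiclassical Fourier multiplier of order $m_1$: a one-dimensional semiclassical Calderón--Vaillancourt estimate as in \cite[Sect.~2]{Tex07}, which in dimension one uses only the $L^\infty$-bound and the single $\xi$-derivative encoded in $\mathcal{N}^{m_1}_{s,1}$ after factoring out $(1+\vert\xi\vert)^{m_1}$, together with the embedding $H^s\hookrightarrow L^\infty$ valid for $s>1/2$, yields $\Vert T^tf\Vert_{H^{-m_1}_t}\lesssim N^{m_1}_{s,1}(\Pi_p)\,\Vert f\Vert_{L^2}\lesssim\Vert v\Vert_{H^s\times H^s}\,\Vert f\Vert_{L^2}$, uniformly in $t$.

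The remainder $R^t$ is where the hypothesis $s>m_1$ comes in. Decomposing $\Pi_p$ dyadically in its $x$-frequency, the component $\Delta^x_j\Pi_p$ of $x$-frequency $\sim 2^j$ obeys $\sup_\xi(1+\vert\xi\vert)^{-m_1}\Vert\Delta^x_j\Pi_p(\cdot,\xi)\Vert_{L^2_x}\lesssim 2^{-js}N^{m_1}_{s,1}(\Pi_p)$ and interacts only with the part of $f$ of frequency $\lesssim 2^j$; since the output frequency is then $\sim 2^j$ and $t\leq1$, the semiclassical weight $(1+t2^j)^{-m_1}$ is harmless, and the geometric gain $2^{-js}$ against the loss of order $m_1$ makes the dyadic sum converge provided $s>m_1$. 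One concludes that $R^t$ maps $L^2$ into a semiclassical Sobolev space strictly stronger than $H^{-m_1}_t$, again with operator norm $\lesssim\Vert v\Vert_{H^s\times H^s}$ uniformly in $t$. Adding the two contributions together with the multiplier part yields \eqref{e:fractional_estimate}.

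The main obstacle I anticipate is the uniform control of the semiclassical parameter $t$ throughout the paraproduct estimates, and in particular checking that the very low $\xi$-regularity of $\Pi_p$ (one bounded, possibly discontinuous, $\xi$-derivative, with the kink of $\vert\xi\vert$ at $\xi=0$, although $\Pi_p(\cdot,0)\equiv0$) is compatible with the Calderón--Vaillancourt and paraproduct bounds; this is exactly the point at which the arguments of \cite{Lannes06,Tex07} must be adapted rather than quoted. It is also what forces one to use the $L^2$-based (square-function) form of the remainder estimate: a crude pointwise bound on $\Delta^x_j\Pi_p$ would only give the threshold $s>m_1+1/2$, and recovering the sharp $s>m_1$ requires exploiting the full $H^s$-structure in $x$ as in Lannes' and Texier's refinements. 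Finally, one is tempted to reduce first, via the isometry of Remark~\ref{r:isometry}, to the uniform-in-$t$ non-semiclassical bound $\Vert p(t\cdot,D)g\Vert_{H^{-m_1}}\lesssim\Vert v\Vert_{H^s\times H^s}\Vert g\Vert_{L^2}$; but the dilation $x\mapsto tx$ inflates the $H^s$-norm in $x$ by a factor $t^{-1/2}$, so the gain is lost unless one simultaneously rescales the $x$-frequencies, i.e.\ works directly in the semiclassical picture. For this reason I would carry out the Littlewood--Paley analysis semiclassically from the start.
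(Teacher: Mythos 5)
You take a genuinely different route from the paper, and your stated reason for doing so is based on a misreading of what the isometry trick can and cannot do. Both your argument and the paper's start by splitting $p=\Pi_p+\Sigma_p(0,\cdot)$ and disposing of the Fourier multiplier $\Sigma_p(0,tD)$ (your justification via $1/c_0<m_1$ is fine). For $\Pi_p$, however, the paper does \emph{not} run a semiclassical paraproduct argument from scratch; it invokes Lannes' four-term decomposition $\sigma=\sigma_{lf}+\sigma_I+\sigma_{II}+\sigma_R$, handles $\sigma_{lf}$ by Hwang's $L^2$-bound and $\sigma_I$ by a classical (smooth-symbol) estimate of Texier, and for the two genuinely rough pieces $\sigma_{II},\sigma_R$ it \emph{does} use the isometry $U_t$ of Remark~\ref{r:isometry} to reduce to the non-semiclassical bound $\Vert\sigma^t(x,D)U_t^*f\Vert_{H^{-m_1}}$, which Lannes' Propositions 20 and 23 control by $N^{m_1}_{s-k,2}(\nabla_x^k\sigma^t)$ for any $k<s$. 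The uniformity in $t$ then comes from the scaling estimate \eqref{e:texier}, $N^{m_1}_{s-k,2}(\nabla_x^k\sigma^t)\leq t^{k-1/2}N^{m_1}_{s,2}(\sigma)$: choosing $k\geq 1$ makes $t^{k-1/2}\leq 1$.

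This is exactly the point where your last paragraph goes astray. You argue that the dilation $x\mapsto tx$ inflates the $H^s_x$-norm by $t^{-1/2}$ and therefore ``the gain is lost'' and one must work semiclassically from the start. But the $t^{-1/2}$ inflation is carried only by the zero-derivative (low-$x$-frequency) part of the scaled symbol; every $x$-derivative of $\sigma^t$ produces a factor $t$, so the seminorms Lannes' estimates actually use are scaled by $t^{k-1/2}$, not $t^{-1/2}$. You essentially notice this yourself (``unless one simultaneously rescales the $x$-frequencies''), but you draw the wrong conclusion: this is precisely what quoting Lannes' estimates at regularity $\nabla_x^k\sigma^t$, together with \eqref{e:texier}, accomplishes. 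So the isometry reduction is not blocked, and the paper's proof is shorter because it offloads almost all the paraproduct work to cited results.

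Your alternative — a Bony decomposition $\Pi_p(x,tD)=T^t+R^t$ carried out at the semiclassical scale — is a plausible plan, but as written it has real gaps. The paraproduct bound for $T^t$ is asserted by analogy with a ``semiclassical Calder\'on--Vaillancourt'' for frozen-coefficient multipliers, but $\Pi_p(x,t\xi)$ has only one bounded $\xi$-derivative with a kink at $\xi=0$, and the paper explicitly warns that this is what obstructs a direct transfer of paradifferential estimates to the semiclassical setting; you acknowledge the issue but do not resolve it. The remainder estimate is also too schematic: after you cancel the $(1+t2^j)^{\pm m_1}$ weights, the surviving factor is just $2^{-js}$, which sums for any $s>0$, so it is not clear from your bookkeeping where the stated threshold $s>m_1$ actually enters, nor why it should be sharp in your scheme. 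Without filling in those two points, the proposal is a credible sketch of a different method rather than a complete proof.
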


\begin{proof}
\label{r:to_Lannes}
We write $p(x,\xi) = \Pi_p(x,\xi) + \Sigma_p(0,\xi)$. Since $\Sigma_p(0,tD)$ is a Fourier multiplier with symbol belonging to $\mathcal{M}^{m_1}_{\infty,1}$, the it satisfies \eqref{e:fractional_estimate} trivially. It is then sufficient to prove the result for  $\Pi_p(x,tD)$.

To this aim, set  $\sigma(x,\xi) := \Pi_p(x,\xi)$ and consider the decomposition of \cite{Lannes06}:
$$
\sigma = \sigma_{lf} + \sigma_I + \sigma_{II} + \sigma_R.
$$
We will estimate each of these terms separately. First, for the \textit{low-frequency} term $\sigma_{lf}$ we just observe that $\sigma_{lf} \in \mathcal{N}_{s,1}^m$ for every $m \in \R$. Then we can use\footnote{Ona can also mimic the proof of Corollary \ref{c:first_corol} with $m= 0$ instead of $m_1$.} \cite[Corollary 2.2]{Hwang87} together with Lemma \ref{l:from_non_semiclassical_to_semiclassical} and Moser's inequality to get
$$
\Vert \sigma_{lf}(x,tD) f \Vert_{H_t^{-m_1}(\R)} \leq \Vert \sigma_{lf}(x,tD) f \Vert_{L^2(\R)} \lesssim N_{1}(\sigma_{lf}) \Vert f \Vert_{L^2(\R)} \lesssim \Vert v \Vert_{H^{s}\times H^{s}} \Vert f \Vert_{L^2(\R)}.
$$
The second term $\sigma_I$ is smooth in both variables, so one can estimate $\Vert \sigma_I(x,tD) \Vert_{H^{-m_1}_t}$ by the right hand side of \eqref{e:fractional_estimate} just using classical tools. We refer for instance to \cite[Prop. 23]{Tex07}.

In order to bound the terms $\sigma_{II}$ and $\sigma_R$, we adapt the proof of \cite[Prop. 25, (ii)]{Lannes06} to our context. For the term $\sigma_{II}$, we proceed as follows. Using Remark \ref{r:isometry}, we have that
$$
\Vert \sigma_{II}(x,td) f \Vert_{H^{-m_1}_t} = \Vert \sigma_{II}^t(x,D) U_t^* f \Vert_{H^{-m_1}}.
$$
Moreover, by the second estimate of \cite[Prop. 20]{Lannes06}, we have
$$
\Vert \sigma_{II}^t(x,D) U_t^* f \Vert_{H^{-m_1}} \lesssim N_{s-k,2}^{m_1}(\nabla_x^k \sigma^t) \Vert \Vert  U_t^* f \Vert_{L^2} = N_{s-k,2}^{m_1}(\nabla_x^k \sigma^t) \Vert \Vert f \Vert_{L^2},
$$
for every $k < s$. Using that
\begin{equation}
\label{e:texier}
N_{s-k,2}^{m_1}(\nabla_x^k \sigma^t) \leq t^{k-1/2} N_{s,2}^{m_1}(\sigma),
\end{equation}
and Moser's inequality, we obtain the desired estimate.

Finally, the term $\sigma_R$ can be bounded in a similar way. Using again Remark \ref{r:isometry} we have
$$
\Vert \sigma_{R}(x,tD) f \Vert_{H^{-m_1}_t} = \Vert \sigma_{R}^t(x,D) U_t^* f \Vert_{H^{-m_1}}.
$$
Moreover, using the first estimate of \cite[Prop. 23]{Lannes06} we get
$$
\Vert \sigma_{R}^t(x,D) U_t^* f \Vert_{H^{-m_1}} \lesssim N_{s-k,2}^{m_1}(\nabla_x^k \sigma^t) \Vert \Vert  U_t^* f \Vert_{L^2} = N_{s-k,2}^{m_1}(\nabla_x^k \sigma^t)  \Vert f \Vert_{L^2},
$$
for every $k < s$. By a further use of \eqref{e:texier} and Moser's inequality, we conclude.
\end{proof}

We next deal with semiclassical commutator estimates.
\begin{prop}
\label{p:third_prop} Assume \textnormal{\textbf{(C2)}}. Then there exists $C_2 > 0$ such that, for every $t \in (0,1]$:
\begin{equation}
\label{e:easy}
 \Vert t^{-1} \mathfrak{C}_t(p^{-1}, d) \Vert_{\mathcal{L}(H^{-m_1}_t; L^2)} \leq C_2.
\end{equation}
\end{prop}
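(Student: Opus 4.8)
\emph{Overview and reduction.} The idea is to convert the estimate on $\mathcal{L}(H^{-m_1}_t;L^2)$ into an order-$t$ estimate on $\mathcal{L}(L^2)$ for a modified semicommutator, and then to obtain the latter by splitting $\mathfrak{C}_t$ into a part localized near the diagonal $\{\xi=\eta\}$, handled by Corollary \ref{c:localized_castro}, and a dyadic family of pieces supported away from it, handled by Corollary \ref{c:localized_disjoint_supports}. For the reduction, set $\ell(\xi):=(1+i\xi)^{m_1}$ and let $J_t=\ell(x,tD)$ be its semiclassical quantization, i.e.\ the Fourier multiplier with symbol $(1+it\xi)^{m_1}$. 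Then $J_t$ is an isometry of $L^2(\R)$ onto $H^{-m_1}_t(\R)$, so $\Vert\mathfrak{C}_t(p^{-1},d)\Vert_{\mathcal{L}(H^{-m_1}_t;L^2)}=\Vert\mathfrak{C}_t(p^{-1},d)\,J_t\Vert_{\mathcal{L}(L^2)}$; and since $J_t$ acts only on the Fourier side, $d(x,tD)J_t=(d\,\ell)(x,tD)$ and $(p^{-1}d)(x,tD)J_t=(p^{-1}\,d\,\ell)(x,tD)$, whence $\mathfrak{C}_t(p^{-1},d)J_t=\mathfrak{C}_t(p^{-1},d\,\ell)$. Thus \eqref{e:easy} is equivalent to $\Vert\mathfrak{C}_t(p^{-1},d\,\ell)\Vert_{\mathcal{L}(L^2)}\lesssim t$ for $t\in(0,1]$. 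Under \textbf{(C2)} we have $p^{-1}\in\mathcal{M}^{-m_2}_{N,1}$, and since $d\in\mathcal{M}^0_{N,1}$ and $\ell$ is smooth with $|\partial_\xi^\beta\ell|\lesssim(1+|\xi|)^{m_1-\beta}$, also $d\,\ell\in\mathcal{M}^{m_1}_{N,1}$; moreover $-m_2+m_1-1\le0$ by the second inequality in \eqref{e:admissible_pair}, so the pair of orders is admissible for the corollaries of Section 3, and $\mu:=\max\{-m_2,m_1,0\}=m_1$.

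\emph{Splitting off the diagonal.} Fix $\varphi\in\test(\R)$ with $\varphi\equiv1$ near $0$ and write $1=\varphi+\sum_{j\ge0}\psi_j$, where $\psi_j(w)=\psi(2^{-j}w)$ for a fixed $\psi\in\test(\R)$ supported in an annulus $\{\delta\le|w|\le4\delta\}$; then $\supp\psi_j$ lies at distance $d_j\sim2^j$ from the origin, while $\Vert\psi_j\Vert_{W^{1,\infty}}\lesssim1$ and $|I_{\psi_j}|\sim2^j$ uniformly in $j$. Inserting this partition of unity in the frequency variable $\xi-\eta$ of the oscillatory integral \eqref{e:localized_semiclassical_commutator} (with $\varphi\equiv1$) representing $\mathfrak{C}_t(p^{-1},d\,\ell)$ gives
\[
\mathfrak{C}_t(p^{-1},d\,\ell)=\mathfrak{C}_{t,\varphi}(p^{-1},d\,\ell)+\sum_{j\ge0}\mathfrak{C}_{t,\psi_j}(p^{-1},d\,\ell).
\]
Corollary \ref{c:localized_castro}, applied with $p_1=p^{-1}\in\mathcal{M}^{-m_2}_{1,1}$ and $p_2=d\,\ell\in\mathcal{M}^{m_1}_{2,1}$ (here one uses $N\ge2$), controls the first term by $\Vert\mathfrak{C}_{t,\varphi}(p^{-1},d\,\ell)\Vert_{\mathcal{L}(L^2)}\lesssim t\,\Vert\varphi\Vert_{W^{1,\infty}}|I_\varphi|^{m_1}\lesssim t$.

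\emph{The off-diagonal pieces.} For the $j$-th piece, $\supp\psi_j$ sits at positive distance $d_j$ from $0$, so Corollary \ref{c:localized_disjoint_supports} (with exponent $N$) applies and yields
\[
\big\Vert\mathfrak{C}_{t,\psi_j}(p^{-1},d\,\ell)\big\Vert_{\mathcal{L}(L^2)}\lesssim t^{N-1}\,d_j^{-(N-3/2)}\,\Vert\psi_j\Vert_{W^{1,\infty}}\,|I_{\psi_j}|^{m_1}\lesssim t^{N-1}\,2^{-j(N-3/2-m_1)}.
\]
Since $N\ge2$ we have $t^{N-1}\le t$ for $t\in(0,1]$, and since \textbf{(C2)} forces $N>3/2+c_1^{-1}=3/2+m_1$, the series $\sum_{j\ge0}2^{-j(N-3/2-m_1)}$ converges. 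Summing over $j$ and adding the diagonal term gives $\Vert\mathfrak{C}_t(p^{-1},d\,\ell)\Vert_{\mathcal{L}(L^2)}\lesssim t$, hence \eqref{e:easy}.

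\emph{Main obstacle.} The delicate point is exactly this off-diagonal sum: each dyadic piece acquires the favorable factor $t^{N-1}d_j^{-(N-3/2)}$, produced by $N-1$ integrations by parts in the intermediate variable $y$ combined with the lower bound $|\xi-\eta|\gtrsim d_j$, but it also carries the unfavorable factor $|I_{\psi_j}|^{\mu}\sim2^{jm_1}$ coming from the growth of order $m_1$ of the symbol $d\,\ell$ in the momentum variable; summability of the resulting series is precisely the quantitative content of the hypothesis $N>3/2+c_1^{-1}$ in \textbf{(C2)}, and this is where the interplay between the regularity of $c$ and the order $m_1=c_1^{-1}$ (and, through $-m_2+m_1-1\le0$, the admissibility condition $c_1^{-1}-c_2^{-1}\le1$) enters. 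Everything else, in particular the finiteness of the symbol seminorms $\mathfrak{M}(p^{-1},d\,\ell)$ and $\mathfrak{M}_N(p^{-1},d\,\ell)$ occurring in the two corollaries, is routine from $p^{-1}\in\mathcal{M}^{-m_2}_{N,1}$ and $d\,\ell\in\mathcal{M}^{m_1}_{N,1}$.
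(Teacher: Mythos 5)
Your proof is correct and follows essentially the same route as the paper's: you reduce to an order-$t$ bound on $\mathcal{L}(L^2)$ by conjugating with the semiclassical multiplier $J_t$ (the paper equivalently introduces $g$ with $\widehat{f}=(1+2\pi it\xi)^{m_1}\widehat{g}$ and replaces $d$ by $d^\dagger=d\cdot(1+2\pi i\xi)^{m_1}$, which is your $d\,\ell$ up to normalization), then insert the same dyadic partition of unity in $\xi-\eta$ and invoke Corollary \ref{c:localized_castro} for the diagonal block and Corollary \ref{c:localized_disjoint_supports} for the off-diagonal pieces, with summability governed by $N>3/2+m_1$. The identification of the key tension — favorable decay $t^{N-1}d_j^{-(N-3/2)}$ versus the unfavorable $|I_{\psi_j}|^{m_1}\sim 2^{jm_1}$ — matches the argument in the paper exactly.
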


\begin{proof}
Let $f \in H^{-m_1}_t(\R)$, define  $g \in L^2(\R)$ by $\widehat{f}(\xi) = (1 +2\pi it \xi)^{m_1} \widehat{g}(\xi)$.
We have
\begin{align*}
t^{-1} \mathfrak{C}_t(p^{-1},d) f(x) & = t^{-1} \mathfrak{C}_t(p^{-1},d^\dagger) g(x),
\end{align*}
where $d^\dagger(x,\xi) = d(x,\xi) (1 + 2 \pi i\xi)^{m_1}$.
We observe that $d^\dagger \in \mathcal{M}^{m_1}_{N,1}$, while $p^{-1} \in \mathcal{M}^{-m_2}_{N,1}$. Then, to prove \eqref{e:easy}, it is sufficient to show that
\begin{equation}
\label{e:from_L^2_to_L^2}
\Vert t^{-1} \mathfrak{C}_t(p^{-1},d^\dagger) g \Vert_{L^2(\R)} \leq C_2 \Vert g \Vert_{L^2}.
\end{equation}
To this aim, we consider a partition of unity as follows: Let $\varphi, \psi \in \mathbb{C}_c^\infty(\R)$ so that
\begin{align*}
\supp \varphi & \subset \left \{ \frac{1}{2} \leq \vert \xi \vert \leq 2 \right \}, \quad \supp \psi  \subset \left \{ \vert \xi \vert < 1 \right \},
\end{align*}
and such that, setting
\begin{equation}
\label{e:partition_unity}
\left \lbrace \begin{array}{l}
\varphi_{-1}(\xi)  := \psi(\xi) \\[0.2cm]
\varphi_j(\xi)  := \displaystyle \varphi \left( \frac{\xi}{2^j} \right), \quad j \geq 0,
\end{array} \right.
\end{equation}
one has: $1 \equiv  \sum_{j=-1}^\infty \varphi_j(\xi)$.
We then write
\begin{align*}
\mathfrak{C}_t(p^{-1},d^\dagger) & = \sum_{j=-1}^\infty \mathfrak{C}_{t,\varphi_j}(p^{-1}, d^\dagger),
\end{align*}
where the terms $\mathfrak{C}_{t,\varphi_j}(p^{-1}, d^\dagger)$ are defined by \eqref{e:localized_semiclassical_commutator}. By Corollary \ref{c:localized_castro}, we have
\begin{align*}
\Vert \mathfrak{C}_{t,\varphi_{-1}}(p^{-1},d^\dagger) \Vert_{\mathcal{L}(L^2)}  \lesssim t \, \mathfrak{M}(p^{-1},d^\dagger).
\end{align*}
For $j \geq 0$, we use Corollary \ref{c:localized_disjoint_supports}, together with condition $N-3/2 > m_1$, to obtain
\begin{align*}
\Vert \mathfrak{C}_{t,\varphi_{j}}(p^{-1},d^\dagger) \Vert_{\mathcal{L}(L^2)} &  \lesssim t^{N-1} 2^{-j(N-3/2 -m_1)} \, \mathfrak{M}_N(p^{-1},d^\dagger).
\end{align*}
Summing in $j$, we obtain the claim provided that $N-3/2 > m_1$.
\end{proof}

Proposition \ref{p:third_prop} allows us to improve Proposition \ref{p:firs_prop} in the following way:
\begin{corol}
\label{c:better_than_prop_2}
Assume \textnormal{\textbf{(C2)}}. Then the operator $p(x,tD) :  L^2(\R) \longrightarrow H_t^{-m_1}(\R) $ is conti\-nuous for every $t \in (0,1]$, and
\begin{align}
\label{e:Lannes_2}
\Vert p(x,tD) f \Vert_{H_t^{-m_1}(\R)} \lesssim  \Vert f \Vert_{L^2(\R)}.
\end{align}
\end{corol}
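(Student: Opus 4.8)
\emph{Overview.} The plan is to bound $\Vert p(x,tD)f\Vert_{H^{-m_1}_t}$ directly, by commuting the semiclassical Fourier multiplier that defines the $H^{-m_1}_t$-norm past $p(x,tD)$ and then applying the commutator machinery of Section~3 exactly as in the proof of Proposition~\ref{p:third_prop}. Set $\ell(\xi):=(1+2\pi i\xi)^{-m_1}$, an $x$-independent symbol in $\mathcal{M}^{-m_1}_{\infty,1}$ whose semiclassical quantization $\ell(x,tD)$ is the Fourier multiplier with symbol $(1+2\pi it\xi)^{-m_1}$; then $\Vert u\Vert_{H^{-m_1}_t}$ is comparable to $\Vert\ell(x,tD)u\Vert_{L^2}$ uniformly in $t\in(0,1]$ (as in the proofs of Section~2, where this convention is used). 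From the definition \eqref{e:approximate_inverse} of $\mathfrak{C}_t$ one has the exact identity
$$
\ell(x,tD)\,p(x,tD)=(\ell p)(x,tD)+\mathfrak{C}_t(\ell,p),
$$
so it suffices to bound $(\ell p)(x,tD)$ and $\mathfrak{C}_t(\ell,p)$ on $L^2(\R)$ by a constant depending only on $c$, not on $t$.

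\emph{The diagonal term.} I would first check that $\ell p\in\mathcal{M}^0_{N,1}$ with seminorms controlled by $c$. The symbol $\ell(\xi)p(x,\xi)=(1+2\pi i\xi)^{-m_1}(1+c(x)\vert\xi\vert)^{1/c(x)}$ has order $1/c(x)-m_1$, and differentiating $p$ in $x$ only produces logarithmic factors of the shape $(\log(1+c(x)\vert\xi\vert))^{\alpha}$, as displayed just after \eqref{e:p}. Here condition \textbf{(C1)} is essential: it gives the \emph{strict} bound $1/c(x)-m_1\le 1/\inf_x c(x)-1/c_1=:-\delta<0$, and this fixed gap absorbs the logarithms, so that $\ell p\in\mathcal{M}^{-\delta/2}_{N,1}\subset\mathcal{M}^0_{N,1}$. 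By Lemma~\ref{l:from_non_semiclassical_to_semiclassical} the corresponding seminorms of $(\ell p)(x,t\,\cdot)$ stay bounded for $t\in(0,1]$, so \cite[Corollary 2.2]{Hwang87} yields $\Vert(\ell p)(x,tD)\Vert_{\mathcal{L}(L^2)}\lesssim 1$, uniformly in $t$.

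\emph{The commutator term.} For $\mathfrak{C}_t(\ell,p)$ I would reproduce verbatim the argument of Proposition~\ref{p:third_prop}: with the Littlewood--Paley partition $1\equiv\sum_{j\ge-1}\varphi_j$ of \eqref{e:partition_unity} write $\mathfrak{C}_t(\ell,p)=\sum_{j\ge-1}\mathfrak{C}_{t,\varphi_j}(\ell,p)$. Since $\ell\in\mathcal{M}^{-m_1}_{1,1}$, $p\in\mathcal{M}^{m_1}_{N,1}$ with $N\ge2$, and $(-m_1)+m_1-1=-1\le0$, Corollary~\ref{c:localized_castro} handles the low-frequency piece ($\mu=\max\{-m_1,m_1,0\}=m_1$, $\vert I_{\varphi_{-1}}\vert\lesssim1$), giving $\Vert\mathfrak{C}_{t,\varphi_{-1}}(\ell,p)\Vert_{\mathcal{L}(L^2)}\lesssim t\le1$; and for $j\ge0$, where $\operatorname{dist}(0,\supp\varphi_j)\simeq2^{j}$ and $\vert I_{\varphi_j}\vert\simeq2^{j}$, Corollary~\ref{c:localized_disjoint_supports} gives
$$
\Vert\mathfrak{C}_{t,\varphi_j}(\ell,p)\Vert_{\mathcal{L}(L^2)}\lesssim t^{N-1}\,2^{-j(N-3/2)}\,2^{jm_1}=t^{N-1}\,2^{-j(N-3/2-m_1)}.
$$
The sum over $j\ge0$ converges precisely because $N>3/2+c_1^{-1}$ by \textbf{(C2)}, and $t^{N-1}\le1$; hence $\Vert\mathfrak{C}_t(\ell,p)\Vert_{\mathcal{L}(L^2)}\lesssim1$ uniformly in $t\in(0,1]$. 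Adding the two bounds proves \eqref{e:Lannes_2}, and in particular the continuity of $p(x,tD):L^2(\R)\to H^{-m_1}_t(\R)$.

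\emph{Main difficulty.} The hard part will be the diagonal term, namely checking that the logarithms generated by $x$-derivatives of $(1+c(x)\vert\xi\vert)^{1/c(x)}$ do not destroy membership of $\ell p$ in $\mathcal{M}^0_{N,1}$ uniformly in $t$; this is exactly the role of the strict inequality in \textbf{(C1)}. The dyadic summation in the commutator step is the other place where the hypotheses bite, through \textbf{(C2)}, and everything else reduces to direct invocations of the localized commutator corollaries of Section~3.
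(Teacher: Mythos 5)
Your proof is correct and follows essentially the same approach as the paper: decompose $\langle tD\rangle^{-m_1}p(x,tD)$ into the order-zero product symbol $p\langle\xi\rangle^{-m_1}(x,tD)$ (handled by Calder\'on--Vaillancourt) plus a commutator, and control the commutator by the Littlewood--Paley argument of Proposition~\ref{p:third_prop} via Corollaries~\ref{c:localized_castro} and \ref{c:localized_disjoint_supports}. You also place the Fourier multiplier $\ell=\langle\xi\rangle^{-m_1}$ as the \emph{first} argument, writing $\mathfrak{C}_t(\ell,p)$, which is the correct order for the identity $\langle tD\rangle^{-m_1}p(x,tD)=(\ell p)(x,tD)+\mathfrak{C}_t(\ell,p)$ to hold (as written, the paper's $\mathfrak{C}_t(p,\langle\xi\rangle^{-m_1})$ is identically zero since $\langle\xi\rangle^{-m_1}$ is $x$-independent, so the transposition appears to be a typo).
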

\begin{proof}
Denoting $\langle \xi \rangle = 1 + 2\pi i \xi$, and $\langle tD \rangle$ its semiclassical quantization, we have:
\begin{align*}
\Vert p(x,tD) f \Vert_{H^{-m_1}_t} & = \Vert \langle tD \rangle^{-m_1} p(x,tD) f \Vert_{L^2} \\[0.2cm]
 & = \Vert p \langle \xi \rangle^{-m_1} (x,tD)f + \mathfrak{C}_t(p, \langle \xi \rangle^{-m_1})f \Vert_{L^2} \\[0.2cm]
 & \leq \Vert p \langle \xi \rangle^{-m_1} (x,tD)f \Vert_{L^2}  + \Vert \mathfrak{C}_t(p, \langle \xi \rangle^{-m_1})f \Vert_{L^2}.
\end{align*}
The first term is bounded by Calderón-Vaillancourt Theorem \cite[Thm. 2]{Hwang87}. The second one is also bounded by \eqref{e:from_L^2_to_L^2}, after replacing $p^{-1}$ by $p$ and $d^\dagger$ by $\langle \xi \rangle^{-m_1}$.
\end{proof}

\begin{corol}
\label{c:last_corol} Assume \textnormal{\textbf{(C2')}}. Then there exists $C_2 > 0$ such that, for every $t \in (0,1]$:
\begin{equation}
\label{e:easy_2}
 \Vert t^{-1} \mathfrak{C}_t(p^{-1}, d) \Vert_{\mathcal{L}(H^{-m_1}_t; L^2)} \leq C_2.
\end{equation}
\end{corol}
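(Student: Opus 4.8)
The plan is to reproduce the argument of Proposition \ref{p:third_prop}, simply replacing the commutator corollaries for the $\mathcal{M}$-classes by their analogues for the $\mathcal{N}$-classes. As in that proof, writing $\widehat{f}(\xi) = (1 + 2\pi i t \xi)^{m_1} \widehat{g}(\xi)$ (so that $\Vert g \Vert_{L^2} \approx \Vert f \Vert_{H^{-m_1}_t}$) and using that $(1 + 2\pi i t \xi)^{m_1}$ is a Fourier multiplier, one reduces \eqref{e:easy_2} to the $L^2$-bound
$$
\Vert t^{-1} \mathfrak{C}_t(p^{-1}, d^\dagger) \Vert_{\mathcal{L}(L^2)} \leq C_2, \qquad d^\dagger(x,\xi) := d(x,\xi)(1 + 2\pi i \xi)^{m_1}.
$$
Under \textbf{(C2')} I would split $p^{-1} = \Pi_{p^{-1}} + \Sigma_{p^{-1}}(0,\cdot)$ with $\Pi_{p^{-1}} \in \mathcal{N}^{-m_2}_{s,1}$ and $\Sigma_{p^{-1}}(0,\cdot) \in \mathcal{M}^{-m_2}_{\infty,1}$, and correspondingly $d^\dagger = \Pi_{d^\dagger} + \sigma_{d^\dagger}$ with $\Pi_{d^\dagger} := \Pi_d\,(1 + 2\pi i \xi)^{m_1} \in \mathcal{N}^{m_1}_{s,1}$ and $\sigma_{d^\dagger} := \Sigma_d(0,\cdot)(1 + 2\pi i \xi)^{m_1} \in \mathcal{M}^{m_1}_{\infty,1}$. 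Since $\mathfrak{C}_t$ is bilinear and $\mathfrak{C}_t(p,q) = 0$ whenever $q$ is a Fourier multiplier (because then $p(x,tD)q(tD) = (pq)(x,tD)$), only two terms remain,
$$
\mathfrak{C}_t(p^{-1}, d^\dagger) = \mathfrak{C}_t\big(\Pi_{p^{-1}},\Pi_{d^\dagger}\big) + \mathfrak{C}_t\big(\Sigma_{p^{-1}}(0,\cdot),\Pi_{d^\dagger}\big).
$$

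For each of these I would insert the Littlewood--Paley partition $1 \equiv \sum_{j\geq -1}\varphi_j\big(t(\xi - \eta)\big)$ coming from \eqref{e:partition_unity} into the oscillatory kernel, writing the term as $\sum_{j\geq -1}\mathfrak{C}_{t,\varphi_j}(\,\cdot\,,\Pi_{d^\dagger})$. The near-diagonal piece $j = -1$ is controlled by Corollary \ref{c:localized_commutator_in_sobolev} for the first summand and by the first part of Corollary \ref{c:localized_mixed_commutator} for the second; these apply because $\Pi_{p^{-1}} \in \mathcal{N}^{-m_2}_{1,1}$, $\Sigma_{p^{-1}}(0,\cdot) \in \mathcal{M}^{-m_2}_{1,1}$, $\Pi_{d^\dagger} \in \mathcal{N}^{m_1}_{2,0}$ (note $s > 3/2 + c_1^{-1} > 2$, so $H^s \subset H^2$), and the order constraint $-m_2 + m_1 - 1 \leq 0$ holds by \eqref{e:admissible_pair}; this piece contributes $O(t)$. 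For $j \geq 0$ one has $\operatorname{dist}(0,\supp\varphi_j) \sim 2^{j-1}$, $\vert I_{\varphi_j} \vert \sim 2^{j+1}$ and $\Vert\varphi_j\Vert_{W^{1,\infty}} \lesssim 1$ uniformly, so Corollary \ref{c:localized_disjoint_in_sobolev} (resp.\ Corollary \ref{c:localized_mixed_disjoint}) applied with the Sobolev exponent $s$ of \textbf{(C2')} gives
$$
\big\Vert \mathfrak{C}_{t,\varphi_j}(\,\cdot\,,\Pi_{d^\dagger}) \big\Vert_{\mathcal{L}(L^2)} \lesssim t^{s-1/2}\,2^{-j(s-3/2)}\,2^{j\mu},\qquad \mu = \max\{m_1,-m_2,0\} = m_1 .
$$
Summing over $j \geq 0$ gives a convergent geometric series precisely because $s - 3/2 - \mu = s - 3/2 - c_1^{-1} > 0$, and since $t^{s-1/2} \leq t$ for $t \in (0,1]$ and $s > 3/2$, the total is again $O(t)$. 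Dividing by $t$ yields the reduced bound, hence \eqref{e:easy_2}.

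The main obstacle is the bookkeeping in this last step: one has to check that the decay $2^{-j(s-3/2)}$ extracted from the disjoint-support gain of Corollaries \ref{c:localized_disjoint_in_sobolev}--\ref{c:localized_mixed_disjoint} dominates the growth $\vert I_{\varphi_j} \vert^{\mu} \sim 2^{jm_1}$ forced by the positive order $m_1 = c_1^{-1}$ of $\Pi_{d^\dagger}$, which is exactly what the hypothesis $s > 3/2 + c_1^{-1}$ in \textbf{(C2')} provides (playing the role of $N > 3/2 + c_1^{-1}$ in Proposition \ref{p:third_prop}); the restriction $c_1^{-1} - c_2^{-1} \leq 1$ in \eqref{e:admissible_pair} is what makes the order condition $-m_2 + m_1 - 1 \leq 0$ valid so that the commutator corollaries can be invoked at all. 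A minor additional point to verify is that, under \textbf{(C2')}, Moser's inequality keeps all the symbol seminorms appearing above finite and controlled by $\Vert v \Vert_{H^s \times H^s}$.
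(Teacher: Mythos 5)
Your proposal is correct and follows essentially the same route as the paper's own proof: reduce to an $L^2$-bound via the multiplier $(1+2\pi i t\xi)^{m_1}$, split $p^{-1}$ and $d^\dagger$ into their paradifferential pieces $\Pi + \Sigma(0,\cdot)$, observe that the two commutators whose second slot is the Fourier multiplier $\Sigma_{d^\dagger}(0,\cdot)$ vanish, and then handle $\mathfrak{C}_t(\Pi_{p^{-1}},\Pi_{d^\dagger})$ and $\mathfrak{C}_t(\Sigma_{p^{-1}}(0,\cdot),\Pi_{d^\dagger})$ with the dyadic partition $\varphi_j$ and Corollaries \ref{c:localized_commutator_in_sobolev}, \ref{c:localized_disjoint_in_sobolev}, \ref{c:localized_mixed_commutator}, \ref{c:localized_mixed_disjoint}. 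You correctly identify $\mu = m_1$, the order condition $m_1 - m_2 \leq 1$ coming from \eqref{e:admissible_pair}, and the summability threshold $s - 3/2 > c_1^{-1}$, which is exactly the verification the paper leaves implicit.
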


\begin{proof}
We reason as in the proof of Proposition \ref{p:third_prop}. Let $f \in H^{-m_1}_t(\R)$, define  $g \in L^2(\R)$ by $\widehat{f}(\xi) = (1 +2\pi it \xi)^{m_1} \widehat{g}(\xi)$.
We have again
\begin{align*}
t^{-1} \mathfrak{C}_t(p^{-1},d) f(x) & = t^{-1} \mathfrak{C}_t(p^{-1},d^\dagger) g(x),
\end{align*}
where
$$
 d^\dagger(x,\xi) = d(x,\xi) (1 +2 \pi i\xi)^{m_1}.
$$
In this case, we have $\Pi_{d^\dagger} \in \mathcal{N}_{s,1}^{m_1}$ and $\Pi_{p^{-1}} \in \mathcal{N}_{s,1}^{-m_2}$. We aim at proving that
\begin{equation}
\label{e:from_L^2_to_L^2_second}
\Vert t^{-1} \mathfrak{C}_t(p^{-1},d^\dagger) g \Vert_{L^2(\R)} \leq C_2 \Vert g \Vert_{L^2}.
\end{equation}
To this aim, we will localize the commutators with the partition of unity \eqref{e:partition_unity}, and we will use Corollaries \ref{c:localized_commutator_in_sobolev}, \ref{c:localized_mixed_commutator}, \ref{c:localized_disjoint_in_sobolev} and \ref{c:localized_mixed_disjoint}, instead of Corollaries \ref{c:localized_castro} and \ref{c:localized_disjoint_supports}. Let us write
\begin{align*}
d^\dagger(x,\xi) & = \Pi_{d^\dagger}(x,\xi) + \Sigma_{d^\dagger}(0,\xi), \\[0.2cm]
p^{-1}(x,\xi) & = \Pi_{p^{-1}}(x,\xi) + \Sigma_{p^{-1}}(0,\xi).
\end{align*}
Hence
\begin{align*}
\mathfrak{C}_t(p^{-1},d^\dagger) & \\[0.2cm]
 & \hspace*{-1cm}  = \mathfrak{C}_t(\Pi_{p^{-1}},\Pi_{d^\dagger}) + \mathfrak{C}_t(\Sigma_{p^{-1}}(0,\cdot),\Pi_{d^\dagger}) + \mathfrak{C}_t(\Pi_{p^{-1}},\Sigma_{d^\dagger}(0,\cdot)) + \mathfrak{C}_t( \Sigma_{p^{-1}}(0,\cdot),\Sigma_{d^\dagger}(0,\cdot)) \\[0.2cm]
 & \hspace*{-1cm} = A_1 + A_2 + A_3 + A_4.
\end{align*}
First, we have that
\begin{align*}
A_3 & = \mathfrak{C}_t(\Pi_{p^{-1}},\Sigma_{d^\dagger}(0,\cdot)) = 0, \\[0.2cm]
A_4 & = \mathfrak{C}_t( \Sigma_{p^{-1}}(0,\cdot),\Sigma_{d^\dagger}(0,\cdot)) = 0.
\end{align*}
To estimate $A_1$, instead of Corollaries \ref{c:localized_castro} and \ref{c:localized_disjoint_supports}, we use Corollaries \ref{c:localized_commutator_in_sobolev} and \ref{c:localized_disjoint_in_sobolev}. Notice that we can replace the decayment $2^{-j(N-3/2)}$ by $2^{-j(s-3/2)}$ associated with the distance between $0$ and the support of $\varphi_j$. The condition $s - 3/2 > m_1$ suffices then to obtain the claim.

Finally, to deal with $A_2$, we use Corollary \ref{c:localized_mixed_commutator} (the first statement) instead of Corollary \ref{c:localized_castro}, and Corollary \ref{c:localized_mixed_disjoint} instead of Corollary \ref{c:localized_disjoint_supports}.
\end{proof}

\begin{prop}
\label{p:good_commutators}
Assume \textnormal{\textbf{(C2)}}. Then, for every $0 \leq m \leq \lceil m_1\rceil$:
 $$
 \Vert \mathfrak{C}_t(p,p^{-1}) \Vert_{\mathcal{L}(H_t^{-m},H_t^{-m})} \leq t C_1, \quad t \in (0,1].
 $$
\end{prop}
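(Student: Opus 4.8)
The plan is to reduce the $H^{-m}_t \to H^{-m}_t$ estimate to an $L^2 \to L^2$ estimate by conjugating with the semiclassical Japanese bracket $\langle tD \rangle^{-m}$, exactly as in the proof of Corollary \ref{c:better_than_prop_2}. Writing $\langle \xi \rangle = 1 + 2\pi i \xi$, one has
\begin{align*}
\langle tD \rangle^{-m} \mathfrak{C}_t(p,p^{-1}) \langle tD \rangle^{m}
 & = \langle tD \rangle^{-m} \big( p(x,tD) p^{-1}(x,tD) - (p\, p^{-1})(x,tD) \big) \langle tD \rangle^m.
\end{align*}
Commuting $\langle tD \rangle^{-m}$ and $\langle tD \rangle^m$ (which are Fourier multipliers, hence commute with each other and produce further semicommutator terms $\mathfrak{C}_t(\cdot,\langle \xi \rangle^{\pm m})$ when moved past $p(x,tD)$ or $p^{-1}(x,tD)$), the operator $\langle tD\rangle^{-m}\mathfrak{C}_t(p,p^{-1})\langle tD\rangle^m$ becomes a finite sum of semicommutators of the form $\mathfrak{C}_t(q_1,q_2)$, where each $q_i$ is one of $p\langle\xi\rangle^{-m}$, $p\langle\xi\rangle^{\pm m}$, $p^{-1}\langle\xi\rangle^{\pm m}$, or a pure multiplier; in every case $q_1 \in \mathcal{M}^{a}_{N,1}$ and $q_2 \in \mathcal{M}^{b}_{N,1}$ with $a + b \leq m_1 - m_2 \leq 1$ by \eqref{e:admissible_pair}. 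So it suffices to prove $\Vert \mathfrak{C}_t(q_1,q_2)\Vert_{\mathcal L(L^2)} \leq tC_1$ for each such pair.

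First I would handle each such $L^2\to L^2$ semicommutator exactly as in the proof of Proposition \ref{p:third_prop}: localize with the Littlewood--Paley partition of unity \eqref{e:partition_unity}, writing $\mathfrak{C}_t(q_1,q_2) = \sum_{j\geq -1} \mathfrak{C}_{t,\varphi_j}(q_1,q_2)$. For the diagonal term $j=-1$, Corollary \ref{c:localized_castro} applies (the hypothesis $m_1+m_2-1\le 0$ there is exactly our condition, with $m_1,m_2$ replaced by the orders $a,b$) and gives a bound $\lesssim t\,\mathfrak{M}(q_1,q_2)$. For $j\geq 0$, where $\operatorname{dist}(0,\operatorname{supp}\varphi_j)\sim 2^j$, Corollary \ref{c:localized_disjoint_supports} gives $\lesssim t^{N-1} 2^{-j(N-3/2-\max(a,b))}\mathfrak{M}_N(q_1,q_2)$; since $N > 3/2 + c_1^{-1} \geq 3/2 + m_1 \geq 3/2 + \max(a,b)$ by \textbf{(C2)} and \eqref{e:admissible_pair}, the geometric series in $j$ converges, and since $t^{N-1}\leq t$ for $t\in(0,1]$ and $N\geq 2$, the whole sum is $\leq tC_1$. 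The seminorms $\mathfrak M(q_1,q_2)$, $\mathfrak M_N(q_1,q_2)$ are finite because $p,p^{-1}\in\mathcal M^{m_1}_{N,1},\mathcal M^{-m_2}_{N,1}$ and multiplying by $\langle\xi\rangle^{\pm m}$ keeps us in a class of the appropriate order with finitely many $x$-derivatives; here one uses Lemma \ref{l:from_non_semiclassical_to_semiclassical} to get bounds uniform in $t\in(0,1]$.

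The main obstacle, and the only genuinely delicate point, is bookkeeping: verifying that after all the commutations past the Fourier multipliers $\langle tD\rangle^{\pm m}$, every resulting semicommutator has total order $\leq 1$ so that Corollaries \ref{c:localized_castro} and \ref{c:localized_disjoint_supports} apply with the factor $t$ out front, and confirming that the non-semicommutator ``leftover'' piece is zero (the multipliers $\langle\xi\rangle^{\pm m}$ compose exactly, i.e. $\mathfrak{C}_t$ of two multipliers vanishes, so only the genuine $p,p^{-1}$ semicommutator and mixed multiplier terms survive). A cleaner route, which I would actually follow, is to avoid the conjugation-commutation expansion altogether by noting $p\,p^{-1} = 1$, hence $\mathfrak C_t(p,p^{-1}) = p(x,tD)p^{-1}(x,tD) - I$; then for $f\in H^{-m}_t$ one writes, with $g = \langle tD\rangle^{-m_1}$-type rescaling as in Proposition \ref{p:third_prop}, $\mathfrak C_t(p,p^{-1})$ applied to the appropriately weighted function, and invokes the already-proven estimate \eqref{e:from_L^2_to_L^2} (valid after replacing $p^{-1}$ by $p$ and $d^\dagger$ by $\langle\xi\rangle^{-m_1}$, and symmetrically) together with Corollary \ref{c:better_than_prop_2}. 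Either way the quantitative gain of the factor $t$ comes, as the remark after Corollary \ref{c:localized_castro} notes, from the fact that every term in \eqref{e:castro_estimate} carries a $\partial_\xi$, which in the semiclassical scaling produces a power of $t$.
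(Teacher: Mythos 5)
Your high-level plan — conjugate by $\langle tD\rangle^{\pm m}$, reduce to $L^2$, then Littlewood--Paley and Corollaries \ref{c:localized_castro}/\ref{c:localized_disjoint_supports} with the $t$-gain coming from $\partial_\xi$ — is the right circle of ideas, and your order-counting $m_1-m_2\le 1$ is the correct reason the estimates close. However, there is a genuine gap in the bookkeeping step.

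The claim that $\langle tD\rangle^{-m}\mathfrak{C}_t(p,p^{-1})\langle tD\rangle^m$ ``becomes a finite sum of semicommutators $\mathfrak{C}_t(q_1,q_2)$'' with the non-semicommutator leftover vanishing is not correct. Conjugation on the right is exact: $\mathfrak{C}_t(p,p^{-1})\langle tD\rangle^m = \mathfrak{C}_t(p,q)$ with $q=p^{-1}\langle\xi\rangle^m$, because a Fourier multiplier applied first merely rescales the symbol of the operator it hits. But pushing $\langle tD\rangle^{-m}$ through from the left produces
$$
\langle tD\rangle^{-m}\mathfrak{C}_t(p,q) = \mathfrak{C}_t(\langle\xi\rangle^{-m}p,\,q) + \mathfrak{C}_t(\langle\xi\rangle^{-m},p)\,q(x,tD),
$$
and the second summand is a \emph{composition} of a semicommutator with the operator $q(x,tD)$, not a semicommutator. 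Since $q\in\mathcal{M}^{m-m_2}_{N,1}$ can have strictly positive order (when $m>m_2$), $q(x,tD)$ need not be bounded on $L^2$, so you cannot simply bound the two factors separately; an extra weight-shifting argument is needed to absorb the positive order of $q$ into the commutator (for instance by writing $q(x,tD)=p^{-1}(x,tD)\langle tD\rangle^m$ and pulling the $\langle tD\rangle^m$ into the commutator as you did on the other side, which then requires checking yet more symbol classes). You never address this term, and in fact you assert the opposite, that ``the non-semicommutator leftover piece is zero.'' The alternative ``cleaner route'' you sketch has a related problem: after the one-sided reduction $\mathfrak{C}_t(p,p^{-1})f=\mathfrak{C}_t(p,q)g$ with $g=\langle tD\rangle^{-m}f$, what is required is an $L^2\to H^{-m}_t$ estimate for $\mathfrak{C}_t(p,q)$, whereas \eqref{e:from_L^2_to_L^2} and Corollary \ref{c:localized_castro} only give $L^2\to L^2$ bounds, and the symbols here have total order $m_1+(m-m_2)$, which exceeds $1$ whenever $m\ge 1$, so those results cannot be applied as black boxes.

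For comparison, the paper avoids the double conjugation entirely: it uses only the exact one-sided identity $\mathfrak{C}_t(p,p^{-1})f=\mathfrak{C}_t(p,q)g$, and then estimates $\Vert \mathfrak{C}_{t,\varphi_j}(p,q)g\Vert_{H^{-m}_t}$ directly by duality against $h\in H^m_t$, redoing the Hwang-style integration by parts (now $m+1$ times in $x$) so that the $\langle t\xi\rangle^{-m}$ weight is picked up inside the oscillatory integral rather than by composing operators. This re-derivation — rather than a citation of the $L^2$-corollaries — is precisely what handles the step that your argument leaves open. (Also minor: the paper first reduces to integer $m$ by interpolation; you should say this as well for the non-integer case.)
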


\begin{proof}
By interpolation, be can assume that $m$ is an integer. Let $f \in H_t^{-m}(\R)$, we define $g \in L^2(\R)$  via $\widehat{f}(\xi) = (1 + 2\pi it \xi)^{m} \widehat{g}(\xi)$.
Then
$$
\mathfrak{C}_t(p,p^{-1})f = \mathfrak{C}_t(p,q) g,
$$
where $q(x,\xi) = p^{-1}(x,\xi)(1 + 2 \pi i\xi)^{m}$ belongs to $\mathcal{M}^{m-m_2}_{N,1}$. It is then sufficient to show that
$$
\Vert \mathfrak{C}_t(p,q)g \Vert_{H_t^{-m}(\R)} \leq t C_1  \Vert g \Vert_{L^2(\R)}.
$$
Using the partition of unity \eqref{e:partition_unity}, we split the sum as
$$
\Vert \mathfrak{C}_t(p,q)g \Vert_{H_t^{-1}(\R)} \leq \sum_{j=-1}^{\infty} \Vert \mathfrak{C}_{t,\varphi_j}(p,q)g \Vert_{H_t^{-1}(\R)}.
$$
We claim that there exists $\alpha > 0$ such that, for every $h \in H_t^{m}(\R)$,
$$
\left \vert \int_\R \mathfrak{C}_{t,\varphi_j}(p,q)g(x) h(x) dx \right \vert \lesssim t \, 2^{-\alpha j} \Vert g \Vert_{L^2(\R)} \Vert h \Vert_{H_t^{m}(\R)}.
$$
To show this, we write
$$
\int_\R \mathfrak{C}_{t,\varphi_j}(p,q)g(x) h(x) dx  = \int_{\R^2} e^{2\pi i x \xi} Q_t^{j}(x,\xi) h(x) d\xi dx,
$$
where
$$
Q_t^{j}(x,\xi) = \int_{\R^2} e^{2\pi i (-\xi y + y\eta)} \varphi_j \big( t(\xi - \eta) \big)\big( p(x,t\xi) - p(x,t\eta) \big) q(y,t\eta) \widehat{g}(\eta) d\eta dy.
$$
Morover,
\begin{align*}
\int_{\R^2} e^{2\pi i x \xi} Q_t^{j}(x,\xi) h(x) d\xi dx & = \int_{\R^3} e^{2\pi i x(\xi + \lambda)} \frac{Q^{j}_t(x,\xi)}{(1 + 2\pi i t\xi)^{m}} \big( 1 + 2\pi i t\xi)^{m} \big) \widehat{h}(\lambda) d\lambda \\[0.2cm]
 &  = \sum_{k=0}^{m} {m \choose k} \int_{\R^3} \frac{Q_t^{j}(x,\xi)}{(1 + 2\pi i\xi)^{m}} (1 + 2\pi i t(\lambda + \xi))^{m - k}(2\pi i t \lambda)^k \widehat{h}(\lambda)d\lambda \\[0.2cm]
 & = \sum_{k=0}^{m} I_k.
\end{align*}
Using integration by parts in the $x$ variable, and the following identity,
$$
\frac{1}{(1 + 2\pi i (\xi + \lambda))^{m+1}} (1 + \partial_x)^{m+1} e^{2\pi i x(\xi + \lambda)} = e^{2\pi i x(\xi + \lambda)},
$$
as in the proof of Proposition \ref{p:firs_prop}, we obtain, for $0 \leq k \leq m$, that
\begin{align*}
\vert I_k \vert & \leq \left \Vert \frac{\mathcal{D}_x^{m+1} Q^{j}_t(x,\xi)}{(1 + 2\pi i t \xi)^{m}} \right \Vert_{L^2(\R^2)} \Vert h \Vert_{H^k_t(\R)}.
\end{align*}
Recalling the proofs of Corollary \ref{c:localized_castro} and Proposition \ref{p:third_prop}, the claim is obtained in a similar way. Notice that
\begin{align*}
 t \cdot \sup_{(x,\xi,\eta) \in \R^3} \left \vert \frac{\varphi_{-1}\big(t(\xi - \eta) \big)}{(1+ 2\pi i t \xi)^m} \frac{\mathcal{D}^{m+1}_x\big( p(x,t\xi) - p(x,t\eta) \big)(1+2\pi i t \eta)^{m-m_2}}{2\pi i t(\xi - \eta)}  \right \vert &  \\[0.2cm]
 & \hspace*{-8cm} =  t \cdot \sup_{(x,\xi,\eta) \in \R^3} \left \vert \frac{\varphi_{-1}(\xi - \eta)}{(1+ 2\pi i  \xi)^m} \frac{\mathcal{D}^{m+1}_x\big( p(x,\xi) - p(x,\eta) \big)(1+2\pi i  \eta)^{m-m_2}}{2\pi i (\xi - \eta)}  \right \vert \\[0.2cm]
 & \hspace*{-8cm} \leq t \, \Vert \varphi_{-1} \Vert_{L^\infty}  \sup_{x \in \R} \sup_{ (\xi, \eta, \rho) \in \Omega_{\varphi_{-1}}} \left \vert \frac{\mathcal{D}^{m+1}_x \partial_\eta p(x,\eta) (1+ 2\pi i \rho)^{m-m_2}}{(1+ 2\pi i  \xi)^m}  \right \vert  \\[0.2cm]
 & \hspace*{-8cm} \leq t \,M_{m+1,0}^{m_1 - 1}(\partial_\xi p) \sup_{(\xi,\eta,\rho) \in \Omega_{\varphi_{-1}}} (1+ \vert \eta \vert)^{m_1-1} (1+ \vert \xi \vert)^{-m} (1 + \vert \rho \vert)^{m-m_2} \\[0.2cm]
 & \hspace*{-8cm} \lesssim t.
\end{align*}
Similarly, for $j \geq 0$,
\begin{align*}
 t^{N-1} \cdot \sup_{(x,\xi,\eta) \in \R^3} \left \vert \frac{\varphi_{j}\big(t(\xi - \eta) \big)}{(1+ 2\pi i t \xi)^m} \frac{\mathcal{D}^{m+1}_x\big( p(x,t\xi) - p(x,t\eta) \big)(1+2\pi i t \eta)^{m-m_2}}{t (2\pi i (\xi - \eta))^{N-1}}  \right \vert &  \\[0.2cm]
 & \hspace*{-11cm} \lesssim  t^{N-1} 2^{-j(N-2)} \cdot \sup_{(x,\xi,\eta) \in \R^3} \left \vert \frac{\varphi_{j}(\xi - \eta)}{(1+ 2\pi i  \xi)^m} \frac{\mathcal{D}^{m+1}_x\big( p(x,\xi) - p(x,\eta) \big)(1+2\pi i  \eta)^{m-m_2}}{2\pi i (\xi - \eta)}  \right \vert \\[0.2cm]
 & \hspace*{-11cm} \lesssim t^{N-1} 2^{-j(N-2)} \sup_{x \in \R} \sup_{ (\xi, \eta,\rho) \in \Omega_{\varphi_{j}}} \left \vert \frac{\mathcal{D}^{m+1}_x \partial_\eta p(x,\eta)(1+2\pi i \rho)^{m-m_2}}{(1+ 2\pi i t \xi)^m}  \right \vert  \\[0.2cm]
 & \hspace*{-11cm} \lesssim t^{N-1} 2^{-j(N-2)} \,M_{m+1,0}^{m_1 - 1}(\partial_\xi p) \sup_{(\xi,\eta,\rho) \in \Omega_{\varphi_{j}}} (1+ \vert \eta \vert)^{m_1-1} (1+ \vert \xi \vert)^{-m} (1+\vert \rho \vert)^{m-m_2} \\[0.2cm]
 & \hspace*{-11cm} \lesssim t^{N-1} 2^{-j(N-2)} 2^{jm_1}.
\end{align*}
Thus, using \eqref{e:extra_decayment}, that $p \in \mathcal{M}^{m}_{N,1}$ and $q \in \mathcal{M}^{m-m_2}_{N,1}$, condition $N-3/2-m_1 > 0$, and the ideas above, is sufficient to finish the proof.
\end{proof}
Assembling the previous ideas together with those of the proof of Corollary \ref{c:last_corol}, we get also the following:
\begin{corol}
\label{c:now_with_sobolev}
Assume  \textnormal{\textbf{(C2')}}. Then, for every $0 \leq m \leq \lceil m_1 \rceil$,
 \begin{equation}
 \label{e:remainder_for_invertibility}
 \Vert \mathfrak{C}_t(p,p^{-1}) \Vert_{\mathcal{L}(H_t^{-m},H_t^{-m})} \leq t C_1, \quad t \in (0,1].
 \end{equation}
\end{corol}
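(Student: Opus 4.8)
The plan is to follow the proof of Proposition \ref{p:good_commutators} line by line, replacing each use of an $\mathcal{M}$-class commutator estimate by its Sobolev-in-$x$ counterpart exactly as in the passage from Proposition \ref{p:third_prop} to Corollary \ref{c:last_corol}. By complex interpolation on the scale $\{H^{-m}_t\}$ it is enough to treat an integer $m$ with $0 \le m \le \lceil m_1 \rceil$. Given $f \in H^{-m}_t(\R)$ I would define $g \in L^2(\R)$ by $\widehat f(\xi) = (1+2\pi i t\xi)^m \widehat g(\xi)$, so that, just as in Proposition \ref{p:good_commutators}, $\mathfrak{C}_t(p,p^{-1})f = \mathfrak{C}_t(p,q)g$ with $q(x,\xi) = p^{-1}(x,\xi)(1+2\pi i\xi)^m$; under \textbf{(C2')} one has $q = \Pi_q + \Sigma_q(0,\cdot)$ with $\Pi_q \in \mathcal{N}^{m-m_2}_{s,1}$ and $\Sigma_q(0,\cdot) \in \mathcal{M}^{m-m_2}_{\infty,1}$. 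It then suffices to prove $\Vert \mathfrak{C}_t(p,q)g\Vert_{H^{-m}_t(\R)} \lesssim t \Vert g\Vert_{L^2(\R)}$.

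Writing also $p = \Pi_p + \Sigma_p(0,\cdot)$, I split $\mathfrak{C}_t(p,q)$ into the four commutators $\mathfrak{C}_t(\Pi_p,\Pi_q)$, $\mathfrak{C}_t(\Sigma_p(0,\cdot),\Pi_q)$, $\mathfrak{C}_t(\Pi_p,\Sigma_q(0,\cdot))$ and $\mathfrak{C}_t(\Sigma_p(0,\cdot),\Sigma_q(0,\cdot))$, exactly as in the proof of Corollary \ref{c:last_corol}. The last two vanish (their second argument is a Fourier multiplier), so only $A_1 := \mathfrak{C}_t(\Pi_p,\Pi_q)$ and $A_2 := \mathfrak{C}_t(\Sigma_p(0,\cdot),\Pi_q)$ remain. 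Each is bounded by duality against $h \in H^m_t(\R)$: after $m+1$ integrations by parts in $x$ — producing $\mathcal{D}_x^{m+1}$ on the symbols, the gain $(1 + 2\pi i t(\lambda+\xi))^{-(m+1)}$, and the Leibniz expansion of $(1+2\pi i t\xi)^m$ acting on $\widehat h$, precisely as after \eqref{e:dual_int_by_parts2} and in Proposition \ref{p:good_commutators} — one localizes near and far from the diagonal $\{\xi=\eta\}$ with the partition of unity \eqref{e:partition_unity} and estimates each localized piece. For $A_1$ the $x$-regularity is controlled in $L^2_x$ via the Minkowski/mean-value argument of Lemma \ref{c:commutator_in_sobolev} (this is where $\Pi_p \in \mathcal{N}^{m_1}_{m+1,1}$, hence the hypothesis $s \ge 1 + \lceil m_1 \rceil$, is used, together with $\Pi_q \in \mathcal{N}^{m-m_2}_{2,0}$); the near-diagonal piece is then handled following the proof of Corollary \ref{c:localized_commutator_in_sobolev} and the far pieces following that of Corollary \ref{c:localized_disjoint_in_sobolev}, both adapted to carry the output norm $H^{-m}_t$ as above. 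For $A_2$, where $\Sigma_p(0,\cdot)$ is a Fourier multiplier in $\mathcal{M}^{m_1}_{\infty,1} \subset \mathcal{M}^{m_1}_{1,1}$ (so that all $m+1$ position derivatives fall on $\Pi_q$), one argues the same way with the mixed Corollaries \ref{c:localized_mixed_commutator} and \ref{c:localized_mixed_disjoint}.

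The bookkeeping of the momentum orders is the heart of the matter, and it is identical to that of Proposition \ref{p:good_commutators}: the factor $(1+2\pi i t\xi)^{-m}$ coming from the output norm cancels the $+m$ carried by $q$, so that on $\supp \varphi_{-1}$ the relevant homogeneity in $\xi$ is $(1+|\xi|)^{m_1 - 1 - m_2} \lesssim 1$ by the admissibility bound $m_1 - m_2 \le 1$, while the extra $\partial_\xi$ forced by the mean-value identity \eqref{e:a_1_dagger} produces the factor $t$. For $j \ge 0$ the support of $\varphi_j$ lies at distance $\sim 2^j$ from the diagonal and has length $\sim 2^j$, so the disjoint-support estimates give a bound of the schematic form $t\, 2^{-j(s - 3/2 - m_1)}$, summable in $j$ precisely because $s > 3/2 + m_1 = 3/2 + c_1^{-1}$, again part of \textbf{(C2')}. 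Finally, since $\Pi_p$ and $\Pi_q$ are smooth functions of $v = (v_1,v_2)$, Moser's inequality turns the $\mathcal{N}$-seminorms appearing throughout into powers of $\Vert v\Vert_{H^s\times H^s}$, absorbed into the constant $C_1$.

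I expect the main obstacle to be exactly this last step done carefully: organizing the many summands produced by the $m+1$ integrations by parts in $x$ and by the Leibniz rule so that, after the dyadic localization, every term simultaneously (i) carries a factor $t$, (ii) has non-positive total homogeneity in $\xi$ — which needs $c_1^{-1} - c_2^{-1} \le 1$ — and (iii) decays geometrically in the dyadic index — which needs $s > 3/2 + c_1^{-1}$. No estimate beyond the corollaries already established in Section 3 is required; the three structural hypotheses of \textbf{(C2')} are precisely what make these three requirements compatible.
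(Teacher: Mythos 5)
Your proposal is correct and is precisely the argument the paper intends: the text before Corollary~\ref{c:now_with_sobolev} simply says ``Assembling the previous ideas together with those of the proof of Corollary~\ref{c:last_corol}, we get also the following,'' and you have carried out exactly that — following the duality/$\mathcal{D}_x^{m+1}$ scheme of Proposition~\ref{p:good_commutators} while substituting the $\mathcal{N}$-class (Sobolev-in-$x$) localized commutator corollaries for the $\mathcal{M}$-class ones, with the $\Pi + \Sigma$ splitting and the vanishing of the two Fourier-multiplier terms as in Corollary~\ref{c:last_corol}. The bookkeeping you record (the cancellation $(1+|\xi|)^{m_1-1-m_2}\lesssim 1$ from $c_1^{-1}-c_2^{-1}\le 1$, the dyadic decay from $s>3/2+c_1^{-1}$, and the integer regularity count from $s\ge 1+\lceil c_1^{-1}\rceil$) matches what the paper relies on.
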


We will also require the following coercivity property for $p^{-1}(x,tD)$:

\begin{prop}
\label{l:coercividad}
Let $\langle tD \rangle$ be the semiclassical quantization of the symbol $\langle \xi \rangle = 1 +2\pi  i \xi$. Then there exists $C > 0$ and $0 < T \leq 1$ such that
$$
\Vert f \Vert_{H^{-m_1}_t(\R)} = \Vert \langle tD \rangle^{-m_1} f \Vert_{L^2(\R)} \leq C \Vert p^{-1}(x,tD) f \Vert_{L^2(\R)}, \quad f \in \mathcal{D}'(\R),
$$
for every $0 < t \leq T$.
\end{prop}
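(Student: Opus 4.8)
The plan is to realize $p(x,tD)$ as an approximate left inverse of $p^{-1}(x,tD)$ and to absorb the resulting error for small $t$. Since $p(x,\xi)\,p^{-1}(x,\xi) \equiv 1$, the semiclassical quantization of the product symbol is the identity, so the very definition \eqref{e:approximate_inverse} of the semiclassical semicommutator gives
$$
p(x,tD)\,p^{-1}(x,tD) = \mathrm{Id} + \mathfrak{C}_t(p,p^{-1}).
$$
Writing $g := p^{-1}(x,tD)f$, this amounts to the identity $f = p(x,tD)\,g - \mathfrak{C}_t(p,p^{-1})f$, which is the starting point of the estimate.

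I would first reduce to $f$ in a dense subspace on which all the norms below are a priori finite (e.g.\ $f \in \mathscr{S}(\R)$, where in particular $g = p^{-1}(x,tD)f \in L^2(\R)$); the general statement then follows by approximation, the constant being independent of the approximating sequence. Taking the $H^{-m_1}_t(\R)$ norm in the identity above and using the triangle inequality yields
$$
\Vert f \Vert_{H^{-m_1}_t(\R)} \leq \Vert p(x,tD)\,g \Vert_{H^{-m_1}_t(\R)} + \Vert \mathfrak{C}_t(p,p^{-1})f \Vert_{H^{-m_1}_t(\R)}.
$$
For the first term I would apply Corollary \ref{c:better_than_prop_2} under \textbf{(C2)} (resp.\ Proposition \ref{p:lannes_and_texier} under \textbf{(C2')}), which bound it by a constant times $\Vert g \Vert_{L^2(\R)} = \Vert p^{-1}(x,tD)f \Vert_{L^2(\R)}$, the constant depending only on $c$.

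For the second term I would use Proposition \ref{p:good_commutators} under \textbf{(C2)} (resp.\ Corollary \ref{c:now_with_sobolev} under \textbf{(C2')}) at the order $m = m_1$: this is admissible since $0 \leq m_1 \leq \lceil m_1 \rceil$ and those statements hold for every real $m$ in that range by interpolation, giving $\Vert \mathfrak{C}_t(p,p^{-1})f \Vert_{H^{-m_1}_t(\R)} \leq t\,C_1 \Vert f \Vert_{H^{-m_1}_t(\R)}$. Combining,
$$
\Vert f \Vert_{H^{-m_1}_t(\R)} \leq C' \Vert p^{-1}(x,tD)f \Vert_{L^2(\R)} + t\,C_1 \Vert f \Vert_{H^{-m_1}_t(\R)},
$$
and choosing $T \in (0,1]$ small enough that $T\,C_1 \leq 1/2$ lets me absorb the last term for every $0 < t \leq T$, which yields the claim with $C = 2C'$.

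The main obstacle is not the algebra, which is short, but making the absorption rigorous: one must know a priori that $\Vert f \Vert_{H^{-m_1}_t(\R)}$ is finite before it can be moved to the left-hand side, which is why the reduction to a dense class (together with the uniformity of the constant under approximation) is needed; and one must check that the commutator estimate of Proposition \ref{p:good_commutators} is genuinely available at the possibly non-integer order $-m_1$ and not only at integer orders — this is precisely what the interpolation step in its proof provides.
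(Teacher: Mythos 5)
Your proposal is correct and uses the same three ingredients as the paper's proof: the algebraic identity $p(x,tD)\,p^{-1}(x,tD)=I+\mathfrak{C}_t(p,p^{-1})$, the boundedness of $p(x,tD):L^2\to H^{-m_1}_t$ (Corollary \ref{c:better_than_prop_2} / Proposition \ref{p:lannes_and_texier}), and the smallness of the semicommutator on $H^{-m_1}_t$ (Proposition \ref{p:good_commutators} / Corollary \ref{c:now_with_sobolev}). The only difference is cosmetic: the paper phrases the final step as a Neumann-series inversion of $I+\mathfrak{C}_t(p,p^{-1})$ on $H^{-m_1}_t$, while you unwind it as an absorption of the term $tC_1\Vert f\Vert_{H^{-m_1}_t}$ for $t\leq T$ small; these are the same argument.
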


\begin{proof}
We formally have
$$
\langle tD \rangle^{-m_1} f(x) = \langle tD \rangle^{-m_1} \big(p(x,tD) p^{-1}(x,tD) \big)^{-1} p(x,tD) p^{-1}(x,tD) f(x).
$$
By Corollary \ref{c:better_than_prop_2} with hypothesis \textbf{(C2)} (resp. Proposition \ref{p:lannes_and_texier} with \textbf{(C2')}), $p(x,tD) : L^2(\R) \to H_t^{-m_1}(\R)$ is continuous, uniformly in $t \in (0,1]$. Then it is sufficient to show that the operator $p(x,tD) p^{-1}(x,tD)$ is invertible in $H_t^{-m_1}(\R)$ with continuous inverse, uniformly in $t \in (0,T]$  for some $T > 0$. To this aim, notice that
$$
p(x,tD) p^{-1}(x,tD) = I + \mathfrak{C}_t(p,p^{-1}).
$$
Using Proposition \ref{p:good_commutators} together with hypothesis \textnormal{\textbf{(C2)}} (resp. Corollary \ref{c:now_with_sobolev} together with \textnormal{\textbf{(C2')}}), we have that
$$
\Vert \mathfrak{C}_t(p,p^{-1}) \Vert_{\mathcal{L}(H^{-m_1}_t)} \leq tC_1.
$$
Then, there exists $T> 0$ sufficiently small such that $\Vert \mathfrak{C}_t(p,p^{-1}) \Vert_{\mathcal{L}(H^{-m_1}_t)} < 1$ for $t \in (0,T]$. Then, the operator $p(x,tD) p^{-1}(x,tD)$ is invertible and has continuous inverse in $H_t^{-m_1}(\R)$, uniformly bounded for $t \in (0,T]$.
\end{proof}

\section{Proof of Theorems \ref{t:main_theorem1} and \ref{t:localization}}

\begin{proof}[Proof of Theorem \ref{t:main_theorem1}]
First observe that
$$
\frac{d}{dt} p^{-1}(x,t\xi) = -t^{-1} d(x,t\xi) p^{-1}(x,t\xi).
$$
In view of (IVP) and \eqref{e:semicommutator}, we have
\begin{align*}
\frac{d}{dt} \Vert p^{-1}(x,tD) f(t) \Vert_{L^2}^2 & = \frac{d}{dt} \big \langle p^{-1}(x,tD) f(t), p^{-1}(x,tD) f(t) \big \rangle_{L^2} \\[0.2cm]
 & = 2 \operatorname{Re} \big \langle t^{-1} \mathfrak{C}_t(p^{-1},d) f(t), p^{-1}(x,tD) f(t) \big \rangle_{L^2}.
\end{align*}
Finally, using Proposition \ref{p:third_prop} and Lemma \ref{l:coercividad}, we conclude that
\begin{align*}
\Vert  t^{-1} \mathfrak{C}_t(p^{-1},d) f(t) \Vert_{L^2(\R)} & = \Vert  t^{-1} \mathfrak{C}_t(p^{-1},d) \langle tD \rangle^{m_1} \langle tD \rangle^{-m_1} f(t) \Vert_{L^2(\R)} \\[0.2cm]
 & \leq \Vert  t^{-1} \mathfrak{C}_t(p^{-1},d) \Vert_{\mathcal{L}(H_t^{-m_1}, L^2)} \Vert \langle tD \rangle^{m_1} \Vert_{\mathcal{L}(L^2,H_t^{-m_1})} \Vert \langle tD \rangle^{-m_1} f \Vert_{L^2(\R)} \\[0.2cm]
 & \leq C_T \Vert p^{-1}(x,tD) f(t) \Vert_{L^2(\R)}.
\end{align*}


\end{proof}

\begin{proof}[Proof of Theorem \ref{t:localization}]
We first prove the inequality on the left. To this aim, we define a new function $c_\varepsilon : \R \to \R$ satisfying \textbf{(C2')}, such that $c_\varepsilon(x) = c(x)$ in $I_\varepsilon$, and such that $c_\varepsilon^{-} \leq c_\varepsilon(x) \leq c_\varepsilon^+$ for every $x \in \R$. We then define a new symbol $p_\varepsilon^{-1}$ by
$$
p^{-1}_\varepsilon(x,\xi) = (1 + c_\varepsilon(x) \vert \xi \vert)^{-1/c_\varepsilon(x)}.
$$
Notice that $\chi_\varepsilon(x)p^{-1}(x,tD)f(x) = \chi_\varepsilon(x)p^{-1}_\varepsilon(x,tD)f(x)$ and
\begin{align}
\label{e:localize}
\chi_\varepsilon(x)p^{-1}_\varepsilon(x,tD)f(x) = p^{-1}_\varepsilon(x,tD) (\chi_\varepsilon f)(x) - \mathfrak{C}_t(p_\varepsilon^{-1}, \chi_\varepsilon)f(x).
\end{align}
By Corollary \ref{c:last_corol} (notice that we can replace $p^{-1}$ by $p_\varepsilon^{-1}$ and $d$ by $\chi_\varepsilon$ in the statement),
\begin{equation}
\label{e:commutator_with_bump}
\Vert \mathfrak{C}_t(p_\varepsilon^{-1}, \chi_\varepsilon)f \Vert_{L^2(\R)} \leq C_0 t\Vert f \Vert_{H^{-m_1}_t(\R)}.
\end{equation}
On the other hand, notice that
$$
\langle tD \rangle^{s_-} p_\varepsilon(x,tD) = p_\varepsilon \langle \xi \rangle^{s_-}(x,tD) + \mathfrak{C}_t(\langle \xi \rangle^{s_-}, p_\varepsilon),
$$
where $\langle \xi \rangle = 1 + 2 \pi i \xi$. Hence, using that $\langle \xi \rangle^{s_-} \in \mathcal{M}^{s_-}_{\infty,\infty}$, and the commutator estimate \eqref{e:from_L^2_to_L^2_second} with $p_\varepsilon$ replacing $d^\dagger$ and $\langle \xi \rangle^{s_-}$ replacing $p^{-1}$ yields that
\begin{align*}
\Vert p_\varepsilon(x,tD) f \Vert_{H^{s_-}} & = \Vert \langle tD \rangle^{s_-} p_\varepsilon(x,tD) \Vert_{L^2} \\[0.2cm]
 &  \leq \Vert p_\varepsilon \langle \xi \rangle^{s_-}(x,tD) f \Vert_{L^2} + \Vert \mathfrak{C}_t(\langle \xi \rangle^{s_-}, p_\varepsilon) \Vert_{L^2} \\[0.2cm]
 & \leq \Vert p_\varepsilon \langle \xi \rangle^{s_-}(x,tD) f \Vert_{L^2} + C t \Vert f \Vert_{L^2}.
\end{align*}
By the Calderón-Vaillancourt theorems \cite[Thm. 2]{Hwang87} and \cite[Corol. 2.2]{Hwang87}, we also obtain that
$$
\Vert p_\varepsilon \langle \xi \rangle^{s_-}(x,tD) f \Vert_{L^2} \leq C \Vert f \Vert_{L^2}, \quad t \in (0,1].
$$
Moreover, Corollary \ref{c:now_with_sobolev} remains valid for $p^{-1}_\varepsilon$ instead of $p^{-1}$. By Corollary \ref{c:now_with_sobolev}, we also have that
$$
\Vert \mathfrak{C}_t(p_\varepsilon,p^{-1}_\varepsilon) \Vert_{\mathcal{L}(H^{s_-}_t,H^{s_-}_t)} \leq tC.
$$
Thus, we can apply Proposition \ref{l:coercividad} (notice that we can replace $p^{-1}$ by $p_\varepsilon^{-1}$ and $H_t^{-m_1}$ by $H^{s_-}_t$), to obtain, for every $0 < t \leq T$,
$$
\Vert p^{-1}_\varepsilon(x,tD) (\chi_\varepsilon f) \Vert_{L^2} \geq C_1 \Vert \chi_\varepsilon f \Vert_{H^{s_-}_t(\R)}.
$$
It remains to show the inequality on the right. To this aim, we use again \eqref{e:localize} and \eqref{e:commutator_with_bump}, the facts that
$$
p^{-1}_\varepsilon(x,tD) (\chi_\varepsilon f)(x) = p^{-1}_\varepsilon(x,tD)  \langle tD \rangle^{-s_+} \langle tD \rangle^{s_+} (\chi_\varepsilon f)(x),
$$
and that $p^{-1}_\varepsilon(x,tD)  \langle tD \rangle^{-s_+}$ is the semiclassical quantization of the symbol $p^{-1}_\varepsilon(x,\xi)\langle \xi \rangle^{-s_+}$.  It is then sufficient to use  \cite[Thm. 2]{Hwang87} and \cite[Corol. 2.2]{Hwang87}) to show that $p^{-1}_\varepsilon(x,tD)  \langle tD \rangle^{-s_+}$ is bounded from $L^2(\R)$ to $L^2(\R)$ uniformly for $0< t \leq T$. This finishes the proof.

\end{proof}


\end{document}